\newtheorem{example}[theorem]{Example}
\newtheorem{remark}[theorem]{Remark}
\newtheorem{hypothesis}{H}
\definecolor{doushalv}{rgb}{0.78,0.93,0.80}
\title{On optimal solutions of the constrained $\ell_0$ regularization and its penalty problem\thanks{This research is supported in part by Guangdong Provincial Government of China through the ``Computational Science Innovative Research Team'' program, by the Natural Science Foundation of China under grants 11501584, and by the
Natural Science Foundation of Guangdong Province under grants 2014A030310332 and 2014A030310414.}}
\author {Na Zhang \footnotemark[3]
\and
Qia Li
\footnotemark[2]}
\begin{document}

\maketitle
\renewcommand{\thefootnote}{\fnsymbol{footnote}}
\footnotetext[3]{Department of Applied Mathematics, College of Mathematics and Informatics, South China Agricultural University, Guangzhou 510642, P. R. China. }
\footnotetext[2]{Guangdong Province Key Laboratory of Computational Science, School of Data and  Computer Sciences, Sun Yat-sen University, Guangzhou 510275, P. R. China.
(\email{liqia@mail.sysu.edu.cn}).
 Questions, comments, or corrections to this document may be directed to that email address.}

\begin{abstract}
\normalsize
The constrained $\ell_0$ regularization  plays an important role in sparse reconstruction. A widely used approach for solving this problem is the penalty method, of which the least square penalty problem is a special case. However, the connections between global minimizers of the constrained $\ell_0$ problem and its penalty problem have never been studied in a systematic way. This work provides a comprehensive investigation on optimal solutions of these two problems and their connections. We give detailed descriptions of optimal solutions of the two problems, including existence, stability with respect to the parameter, cardinality and strictness. In particular, we find that the optimal solution set of the penalty problem is piecewise constant with respect to the penalty parameter. Then we analyze in-depth the relationship between optimal solutions of the two problems. It is shown that, in the noisy case the least square penalty problem probably has no common optimal solutions with the constrained $\ell_0$ problem for any penalty parameter. Under a mild condition on the penalty function, we establish that the penalty problem has the same optimal solution set as the constrained $\ell_0$ problem when the penalty parameter is sufficiently large. Based on the conditions, we further propose exact penalty problems for the constrained $\ell_0$ problem. Finally, we present a numerical example to illustrate our main theoretical results.
\end{abstract}

\begin{keywords}
optimal solutions, constrained $\ell_0$ regularization, penalty methods, stability
\end{keywords}


\pagestyle{myheadings}
\thispagestyle{plain}
{
\section{Introduction}
Signal  reconstruction is usually formulated as a linear inverse problem
\begin{equation}\label{inverP}
b=Ax+\eta,
 \end{equation}
where $x\in\mathbb{R}^n$ is the true signal, $A$ is an $m\times n$ real matrix, $b\in\mathbb{R}^m$ is the observed data and $\eta\in\mathbb{R}^m$ represents the random noise. Over the last  decade,
sparse reconstruction has received great attentions \cite{Bruckstein-Donoho-Elad:2009, Cand2006Sparsity, Donoho:2006,Nikolova-Ng-Zhang-Ching:2008}. In general, the task of sparse reconstruction is to find a sparse $x$ from \eqref{inverP}.
Obviously, the most natural measure of sparsity is the $\ell_0$-norm, defined at $x\in\mathbb{R}^n$ as
$$
\|x\|_0:=\sharp\supp(x),$$
where $\sharp S$ stands for the cardinality of the set $S$ and $\supp(x)$ denotes the support of $x$, that is $\supp(x):=\{i\in\{1,2,\dots, n\}: x_i\neq 0\}$. When the noise level of $\eta$ is known, we consider the following constrained $\ell_0$ regularization problem
\begin{equation}\label{prob:l2-constrain}
\min\{\|x\|_0: \|Ax-b\|_2\le \sigma, x\in\mathbb{R}^n\},
\end{equation}
where $\|\cdot\|_2$ denotes the $\ell_2$-norm, and $\sigma\ge0$ is the noise level. Specially, in the noiseless case, i.e., when $\sigma=0$, problem \eqref{prob:l2-constrain} reduces to seeking for the sparsest solutions from a linear system as follows
\begin{equation}\label{prob:lineq}
\min\{\|x\|_0: Ax=b, x\in\mathbb{R}^n\}.
\end{equation}

Finding a global minimizer of problem \eqref{prob:l2-constrain} is known to be combinational and NP-hard in general \cite{Davis1997Adaptive,Natarajan1995Sparse,Tropp:2006} as it involves the $\ell_0$-norm. The number of papers dealing with the $\ell_0$-norm is large and several types of numerical algorithms have been adopted to approximately solve the problem. In this paper, we mainly focus on the penalty methods. Before making a further discussion on the penalty methods, we briefly review other two types of approaches, greedy pursuit methods and relaxation of the $\ell_0$-norm. Greedy pursuit methods, such as Orthogonal Matching Pursuit \cite{Tropp-Gilbert:2007} and CoSaMP \cite{Needell-Tropp:2009}, adaptively update the support of $x$ by exploiting the residual and then solve a least-square problem on the support in each iteration. This type of algorithms are widely used in practice due to their relatively fast computational speed. However, only under very strict conditions, can they be shown theoretically to produce desired numerical solutions to problem \eqref{prob:l2-constrain}. Relaxation methods  first replace the $\ell_0$-norm by other continuous cost functions, including convex functions \cite{BergFriedlander:SIAMO:2011} and nonconvex functions \cite{Chartrand:IEEE-Letter:07}, such as the bridge functions $\|\cdot\|^q_q$ with $0<q<1$ \cite{Lu:2014}, capped-$\ell_1$ functions \cite{Fan2002Variable},  minmax concave functions \cite{Zhang:ANNALS:2010} and so on. Then the majorization-minimization strategy is applied to the relaxation problem, which leads to solving a constrained reweighted $\ell_1$ or $\ell_2$ subproblems \cite{Candes-Wakin-Boyd:JFAA:08, Lu:2014} in each step. The cost functions of the iterates are shown to be descent \cite{Bissantz2009Convergence}, while the convergence of the iterates themselves are generally unknown.

The penalty method is a classical and important approach for constrained optimization. For an overview on the penalty method, we refer the readers to the book \cite{Nocedal-Wright:book}. An extensively used penalty method for problem \eqref{prob:l2-constrain} is to solve the following least square  penalty problem
\begin{equation}\label{prob:l2-unconstrain}
\min\{\|x\|_0+\frac{\lambda}{2} \|Ax-b\|^2_2: x\in\mathbb{R}^n\},
\end{equation}
where $\lambda>0$ is a penalty parameter.  As problem \eqref{prob:l2-unconstrain} still involve the $\ell_0$-norm, the greedy pursuit and relaxation methods mentioned before are applicable for approximately solving the problem. However, it has relatively better structure than problem \eqref{prob:l2-constrain} from the standpoint of algorithmic design. More precisely, the proximal operator of $\ell_0$-norm has a closed form, while the least square penalty term in problem \eqref{prob:l2-unconstrain} is differentiable with a Lipschitz continuous gradient. Therefore, it can be directly and efficiently solved by proximal-gradient type algorithms  \cite{Bonettini2015New,Figueiredo2008Gradient, Wright-Nowak-Figueiredo:IEEESP-09} or primal-dual active set methods \cite{Ito-Kunisch:2014,jiao-jin-lvv:acha2016}. One specific proximal-gradient type algorithm for problem \eqref{prob:l2-unconstrain} is the iterative hard thresholding (IHT) \cite{Blumensath-Davies:2008}. In general, it is shown in \cite{attouch-Convergence, Blumensath-Davies:2008} that for arbitrary initial point, the iterates generated by the algorithm converge to a local minimizer of problem \eqref{prob:l2-unconstrain}. Under certain conditions, the IHT converges with an asymptotic linear rate. The structure of the least square penalty problem makes it easier to  develop convergent numerical algorithms.

The penalty parameter $\lambda$  balances  the magnitude of the residual and the sparsity of solutions to  problem \eqref{prob:l2-unconstrain}. An important issue is that wether problem \eqref{prob:l2-constrain} and  \eqref{prob:l2-unconstrain} share a common global minimizer for some penalty parameter $\lambda$. It seems intuitive that there exists certain connections between these two problems. However, to the best of our knowledge, there is very little theoretical results on penalty methods for problem \eqref{prob:l2-constrain} in general. Recently, Chen, Lu and Pong study the penalty methods for a class of constrained non-Lipschitz optimization \cite{Chen-Lu:penalty2014}. The non-Lipschitz cost functions considered in \cite{Chen-Lu:penalty2014} does not contain the $\ell_0$-norm and the corresponding results can not be generalized to the constrained $\ell_0$ regularization problems. Nikolova in \cite{Nikolova:2013} gives a detailed mathematical description of optimal solutions to problem \eqref{prob:l2-unconstrain} and further in \cite{Nikolova:ACHA2016} establishes their connections with optimal solutions to the following optimization problem
\begin{equation}\label{prob:l2-K}
\min\{ \|Ax-b\|^2_2:\|x\|_0\leq D, x\in\mathbb{R}^n\},
\end{equation}
where $D>0$ is a given positive integer. Problem \eqref{prob:l2-K} also involves the $\ell_0$-norm and is usually called the sparsity constrained minimization problem. However, optimal solutions of problem \eqref{prob:l2-constrain} and their connections with problem \eqref{prob:l2-unconstrain} have not been studied in these works.

This paper is devoted to describing global minimizers of a class of constrained $\ell_0$ regularization problems and the corresponding penalty methods. As in \cite{Teboulle:asympt2003,Rockafellar2004Variational}, we use ``optimal solutions'' for global minimizers and ``optimal solution set'' for the set of all global minimizers in the remaining part of this paper. The class of constrained $\ell_0$ regularization problems we considered has the following form
\begin{equation}\label{prob:constrain}
\min\{\|x\|_0: \|Ax-b\|_p\le \sigma, x\in\mathbb{R}^n\},
\end{equation}
where $p=1$ or $2$. Obviously, problem \eqref{prob:constrain} has two cases, the $\ell_2$ constrained problem \eqref{prob:l2-constrain} and the $\ell_1$ constrained problem as follows
\begin{equation}\label{prob:l1-constrain}
\min\{\|x\|_0: \|Ax-b\|_1\le \sigma, x\in\mathbb{R}^n\}.
\end{equation}
 The analysis of these two constrained problems and their penalty methods share many common points and thus we shall discuss them in a unified way. Moreover, we introduce a class of scalar penalty functions $\phi:\mathbb{R}_+\rightarrow \mathbb{R}_+$  satisfying the following blanket assumption, where $\mathbb{R}_+:=[0, +\infty)$.
\begin{hypothesis}\label{hypo:1}
The function $\phi:\mathbb{R}_+\rightarrow\mathbb{R}_+$ is continuous, nondecreasing with $\phi(0)=0$.
\end{hypothesis}

Then the penalty problems for \eqref{prob:constrain} can be in general formulated as
\begin{equation}\label{prob:penalty}
\min\{\|x\|_0+\lambda\phi(\|Ax-b\|_p): x\in\mathbb{R}^n\},
\end{equation}
where $\lambda>0$ is a penalty parameter.
In the case of $p=2$, when we set the penalty function $\phi(z):= \frac{z^2}{2}$ for $z\ge 0$, the general penalty problem \eqref{prob:penalty} reduces to the specific problem \eqref{prob:l2-unconstrain}. We shall give a comprehensive study on connections between optimal solutions of problem \eqref{prob:constrain} and problem \eqref{prob:penalty}.

Our main contributions are summarized below.
\begin{itemize}
\item We study in depth the existence, stability and cardinality of solutions to problems \eqref{prob:constrain} and \eqref{prob:penalty}. In particular, we prove that the optimal solution set of problem \eqref{prob:penalty} is piecewise constant with respect to $\lambda$. More precisely, we present a sequence of real number $\lambda_{-1}=+\infty>\lambda_0>\lambda_1>\cdots>\lambda_{K}=0$ where $K\leq \mathrm{rank}(A)$. We claim that for all $j=0,1,\dots,K$, problems \eqref{prob:penalty} with different $\lambda\in (\lambda_{j},\lambda_{j-1})$ share completely the same  optimal solution set.
\item We clarify the relationship between optimal solutions of problem \eqref{prob:l2-constrain} and those  of problem \eqref{prob:l2-unconstrain}. We find that, when $\sigma>0$ problem \eqref{prob:l2-unconstrain} probably {\bf {never}} has a common optimal solution with problem \eqref{prob:l2-constrain} for any $\lambda>0$. This means that the penalty problem \eqref{prob:l2-unconstrain} may fail to be an exact penalty formulation of problem \eqref{prob:l2-constrain}. A numerical example is presented to illustrate this fact in Section \ref{sec:exp}.
\item We show that under a mild condition on the penalty function $\phi$,  problem \eqref{prob:penalty} has exactly the same optimal solution set as that of problem \eqref{prob:constrain} once $\lambda>\lambda^*$ for some $\lambda^*>0$. In particular, for $p=2$ we present a penalty function $\phi$ with which the penalty term in problem \eqref{prob:penalty} is differentiable with a Lipschitz continuous gradient. This penalty function enables us to design innovative numerical algorithms for problem \eqref{prob:penalty}.
\end{itemize}

The remaining part of this paper is organized as follows. In Section \ref{sec:exist} we show the existence of optimal solutions to problems \eqref{prob:constrain} and \eqref{prob:penalty}. The stability for problems \eqref{prob:constrain} and \eqref{prob:penalty}  are discussed in Section \ref{sec:sta}. We analyze in Section \ref{sec:relation} the relationship between optimal solutions of problems \eqref{prob:l2-constrain} and \eqref{prob:l2-unconstrain}. Section \ref{sec:equal} establishes a sufficient condition on $\phi$, which ensures problems \eqref{prob:penalty} and \eqref{prob:constrain} have completely the same optimal solution set when the parameter $\lambda$
is sufficiently large. We further investigate the cardinality and strictness of optimal solutions to problems \eqref{prob:constrain} and \eqref{prob:penalty} in Section \ref{sec:minimizer}. The theoretical results are illustrated by numerical experiments in Section \ref{sec:exp}. We conclude this paper in Section \ref{sec:con}.

\section{Existence of optimal solutions to problems \eqref{prob:constrain} and \eqref{prob:penalty}}\label{sec:exist}
 This section is devoted to the existence of optimal solutions to problems \eqref{prob:constrain} and \eqref{prob:penalty}. The former issue is trivial, while the latter issue requires careful analysis. After a brief discussion on the existence of optimal solutions to problem \eqref{prob:constrain}, we shall focus on a constructive proof of the existence of optimal solutions to problem \eqref{prob:penalty}.

Since the objective function of problem \eqref{prob:constrain} is proper, lower semi-continuous and has finite values, it is clear that problem \eqref{prob:constrain} has optimal solutions as long as the feasible region is nonempty. We present the result in the following theorem.  For simplicity, we denote by $G_\sigma$ the feasible region of problem \eqref{prob:constrain}, that is
\begin{equation}\label{def:G}
G_\sigma:=\{x:\|Ax-b\|_p\le \sigma, x\in\mathbb{R}^n\}.
\end{equation}

\begin{theorem}\label{thm:existConst}
The optimal solution set of problem \eqref{prob:constrain} is nonempty  if and only if $G_\sigma\neq \emptyset$, where $G_\sigma$ is defined by \eqref{def:G}.
\end{theorem}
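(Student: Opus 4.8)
The plan is to prove the two implications separately, with essentially all of the (minimal) content concentrated in the ``if'' direction. The ``only if'' direction is immediate: if the optimal solution set is nonempty, then it contains some point $x^*$, and since every optimal solution is in particular feasible, we have $x^* \in G_\sigma$, whence $G_\sigma \neq \emptyset$.

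For the ``if'' direction, suppose $G_\sigma \neq \emptyset$. The key observation I would exploit is that the objective $\|\cdot\|_0$ is integer-valued and bounded: for every $x \in \mathbb{R}^n$ we have $\|x\|_0 \in \{0, 1, \ldots, n\}$. Consequently the image set $\{\|x\|_0 : x \in G_\sigma\}$ is a nonempty subset of $\{0, 1, \ldots, n\}$. I would then set $m^* := \inf\{\|x\|_0 : x \in G_\sigma\}$ and invoke the well-ordering principle for the nonnegative integers to conclude that this infimum is in fact a minimum, i.e. there exists $x^* \in G_\sigma$ with $\|x^*\|_0 = m^*$. By definition such an $x^*$ is an optimal solution of problem \eqref{prob:constrain}, so its optimal solution set is nonempty.

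The one step worth flagging is precisely the justification that the infimum is attained, and here it is instructive to note why the naive route fails. The standard Weierstrass argument — lower semicontinuity of the objective together with compactness of the feasible set — is unavailable, because $G_\sigma = \{x : \|Ax-b\|_p \le \sigma\}$ is generally unbounded whenever $A$ has a nontrivial kernel, so compactness cannot be presumed. The essential point, and the only place where any argument is genuinely required, is therefore the discreteness of the range of $\|\cdot\|_0$: a nonempty set of nonnegative integers always possesses a least element, which dispenses with any topological hypothesis on $G_\sigma$. This is exactly the ``finite values'' remark made in the preamble to the theorem, and it is what allows the existence result to hold under the minimal assumption $G_\sigma \neq \emptyset$, with no coercivity or boundedness requirement on the feasible region.
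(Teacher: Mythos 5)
Your proof is correct and follows essentially the same route as the paper, which likewise disposes of the theorem by noting that the objective is proper with finitely many (integer) values, so its infimum over any nonempty feasible set is attained. Your explicit remark that compactness of $G_\sigma$ cannot be assumed and that the discreteness of the range of $\|\cdot\|_0$ is the operative fact is exactly the point the paper's one-line argument relies on.
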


In the rest of this section, we dedicate to discussing the existence of optimal solutions to problem \eqref{prob:penalty}.
Since the objective function of \eqref{prob:penalty} consists of two terms, we begin with defining several notations by alternating minimizing each term. To this end, for any positive integer $k$, we define
$$
\mathbb{N}_k:=\{1,2,\dots, k\} \mathrm{~~and~~} \mathbb{N}_{k}^0:=\{0,1,\dots, k\}.
$$
\begin{definition}\label{def:sRhoOmega}
Given $\phi$ satisfying H\ref{hypo:1} and $p\in\{1, 2\}$, the integer $L$, the sets $\{s_i\in \mathbb{N}: i \in\mathbb{N}_{L}^0\}$, $\{\rho_i\ge 0: i \in\mathbb{N}_{L}^0\}$, $\{\Omega_i\subseteq \mathbb{R}^n: i \in\mathbb{N}_{L}^0\}$ are defined by the following iteration
\begin{eqnarray*}
\mathrm{set~~}& i=0,\\ &\rho_0:=&\min\{\phi(\|Ax-b\|_p):x\in\mathbb{R}^n\},\\
&s_0:=&\min\{\|x\|_0: \phi(\|Ax-b\|_p)=\rho_0, x\in\mathbb{R}^n\},\\
&\Omega_0:=&\mathrm{arg}\min\{\|x\|_0: \phi(\|Ax-b\|_p)=\rho_0, x\in\mathbb{R}^n\},\\
\mathrm{while~~}&s_i>0 &\\
&\rho_{i+1}:=&\min\{\phi(\|Ax-b\|_p): \|x\|_0\le s_i-1, x\in\mathbb{R}^n\},\\
&s_{i+1}:=&\min\{\|x\|_0: \phi(\|Ax-b\|_p)=\rho_{i+1}, x\in\mathbb{R}^n\},\\
&\Omega_{i+1}:=&\arg\min\{\|x\|_0: \phi(\|Ax-b\|_p)=\rho_{i+1}, x\in\mathbb{R}^n\},\\
&i=&i+1,\\
\mathrm{end}&&\\
&L:=i.\\
\end{eqnarray*}
\end{definition}

We first show that Definition \ref{def:sRhoOmega} is well defined. It suffices to prove both the following two optimization problems
\begin{equation}\label{eq:subpConsL0}
\min\{\phi(\|Ax-b\|_p):\|x\|_0\le k, x\in\mathbb{R}^n\}
\end{equation}
and
\begin{equation}\label{eq:subpL0}
\min\{\|x\|_0: x\in S\}
\end{equation}
have optimal solutions for any $k\in \mathbb{N}_n^0$ and $\emptyset \neq S\subseteq\mathbb{R}^n$.
Clearly, problem \eqref{eq:subpL0} has an optimal solution since the objective function is piecewise constant and has finite values. The next lemma tells us that the optimal solution set of problem \eqref{eq:subpConsL0} is always nonempty for any $k\in\mathbb{N}_n^0$.  We denote by $\mathbf{0}_n$  the $n$-dimensional vector with $0$ as its components. For an $m\times n$ matrix $B$ and $\Lambda\subseteq \mathbb{N}_n$, let $B_\Lambda$ formed by the columns of $B$ with indexes in $\Lambda$. Similarly, for any $x\in\mathbb{R}^n$ and $\Lambda\subseteq \mathbb{N}_n$, we denote by $x_\Lambda$ the vector formed by the components of $x$ with indexes in $\Lambda$.

\begin{lemma}\label{lema:L0constSolu}
Given $\phi$ satisfying H\ref{hypo:1} and $p\in\{1, 2\}$,  for any $k\in\mathbb{N}_n^0$, the optimal solution set of problem \eqref{eq:subpConsL0} is nonempty.
\end{lemma}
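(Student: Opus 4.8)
The plan is to exploit the combinatorial-geometric structure of the feasible set $\{x\in\mathbb{R}^n:\|x\|_0\le k\}$, which is a finite union of coordinate subspaces, together with the monotonicity of $\phi$ granted by H\ref{hypo:1}. First I would dispose of the trivial case $k=0$, in which the feasible set is the singleton $\{\mathbf{0}_n\}$ and the minimum is attained automatically. For $k\ge 1$, I would write the feasible set as $\bigcup_{\Lambda}V_\Lambda$, where $\Lambda$ ranges over all index sets in $\mathbb{N}_n$ of cardinality $\min\{k,n\}$ and $V_\Lambda:=\{x\in\mathbb{R}^n:\supp(x)\subseteq\Lambda\}$; this is a finite union, since there are only $\binom{n}{\min\{k,n\}}$ such index sets, and every $x$ with $\|x\|_0\le k$ lies in at least one $V_\Lambda$. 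The key reduction is that on each $V_\Lambda$ the objective of \eqref{eq:subpConsL0} depends only on the reduced variable $x_\Lambda$ through $\phi(\|A_\Lambda x_\Lambda-b\|_p)$, so it suffices to show that the minimum of $g_\Lambda(y):=\phi(\|A_\Lambda y-b\|_p)$ over $y\in\mathbb{R}^{|\Lambda|}$ is attained for each fixed $\Lambda$.

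The central step is to decouple the outer function $\phi$ from the inner residual norm. The range $\{A_\Lambda y:y\in\mathbb{R}^{|\Lambda|}\}$ is a finite-dimensional, hence closed, linear subspace of $\mathbb{R}^m$, and the map $z\mapsto\|z-b\|_p$ is continuous and coercive (since $\|z-b\|_p\ge\|z\|_p-\|b\|_p\to+\infty$ as $\|z\|_p\to+\infty$); therefore, by the Weierstrass theorem applied over the intersection of this closed subspace with a sufficiently large ball, the distance $r_\Lambda^*:=\min_y\|A_\Lambda y-b\|_p$ is attained at some $y^*$. Here I would invoke the monotonicity of $\phi$: because $\|A_\Lambda y-b\|_p\ge r_\Lambda^*$ for every $y$ and $\phi$ is nondecreasing, we obtain $g_\Lambda(y)=\phi(\|A_\Lambda y-b\|_p)\ge\phi(r_\Lambda^*)=g_\Lambda(y^*)$, so $y^*$ minimizes $g_\Lambda$. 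I would emphasize that this argument uses only that $\phi$ is nondecreasing, and in particular requires neither convexity nor coercivity of $\phi$ itself.

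Finally, I would assemble the pieces. Each $\Lambda$ yields a minimizer $x^{(\Lambda)}\in V_\Lambda$ (the embedding of $y^*$ into $\mathbb{R}^n$ supported on $\Lambda$) with objective value $\phi(r_\Lambda^*)$, and the infimum of \eqref{eq:subpConsL0} over the whole feasible set equals $\min_\Lambda\phi(r_\Lambda^*)$, a minimum of finitely many real numbers; this value is achieved by the $x^{(\Lambda)}$ realizing the smallest $\phi(r_\Lambda^*)$, which proves the optimal solution set is nonempty.

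I do not expect a serious obstacle here; the only point demanding care is the attainment of the inner least-residual problem $\min_y\|A_\Lambda y-b\|_p$, for which the Hilbert-space orthogonal-projection argument available when $p=2$ does not carry over to $p=1$. I would instead treat both values of $p$ uniformly through the coercivity-plus-closedness argument above, which applies verbatim to either norm and thus respects the paper's aim of handling $p\in\{1,2\}$ in a unified way.
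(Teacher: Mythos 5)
Your proof is correct, and its overall architecture coincides with the paper's: reduce via the monotonicity of $\phi$ to minimizing $\|Ax-b\|_p$, decompose the feasible set $\{x:\|x\|_0\le k\}$ into the finitely many coordinate subspaces indexed by supports $\Lambda$ of size $k$, solve the restricted least-residual problem on each, and take the best of finitely many candidates. The one place where you genuinely diverge is the attainment of $\min_y\|A_\Lambda y-b\|_p$: the paper splits into cases, invoking the pseudo-inverse $A_\Lambda^\dagger b$ for $p=2$ and citing a result of Auslender--Teboulle (a piecewise linear function bounded below attains its infimum) for $p=1$, whereas you observe that the range of $A_\Lambda$ is a closed subspace of $\mathbb{R}^m$ on which $z\mapsto\|z-b\|_p$ is continuous and coercive, so Weierstrass applies uniformly to both norms; any preimage of the minimizing point of the range then solves the inner problem. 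Your version is more self-contained and treats $p\in\{1,2\}$ (indeed any norm) in one stroke, at the cost of not exhibiting an explicit minimizer in the $p=2$ case, which the paper's pseudo-inverse formula provides and which is occasionally convenient elsewhere in arguments of this type. Both arguments are complete; no gap.
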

\begin{proof}
As function $\phi$ is nondecreasing in $\mathbb{R}_+$, it suffices to show that the optimization problem
\begin{equation}\label{eq:subpConsL0-2}
\min\{\|Ax-b\|_p:\|x\|_0\le k, x\in\mathbb{R}^n\}
\end{equation}
has an optimal solution. We first prove that for any $\Lambda\subseteq \mathbb{N}_n$, the optimization problem
\begin{equation}\label{eq:subpConsL0-3}
\min\{\|Ax-b\|_p:\mathrm{supp}(x)\subseteq\Lambda, x\in\mathbb{R}^n\}
\end{equation}
has an optimal solution. Obviously, $x^*\in\mathbb{R}^n$ is an optimal solution to problem \eqref{eq:subpConsL0-3} if and only if $x^*_{\mathbb{N}_n\backslash\Lambda}=\mathbf{0}_{n-\sharp\Lambda}$ and
\begin{equation}\label{eq:subpConsL0-4}
x^*_{\Lambda}\in\arg\min\{\|A_{\Lambda}z - b\|_p: z\in\mathbb{R}^{\sharp\Lambda}\}.
\end{equation}
In the case of $p=2$, it is well-known that $x^*_{\Lambda}=A_{\Lambda}^{\dag}b$ is an optimal solution to problem \eqref{eq:subpConsL0-4}, where $A_\Lambda^\dag$ is the pseudo-inverse matrix of the matrix $A_\Lambda$.  When $p=1$, the optimal solution set of problem \eqref{eq:subpConsL0-4} is nonempty since $\|A_{\Lambda}\cdot-b\|_1$ is a piecewise linear function and bounded below, see Proposition 3.3.3 and 3.4.2 in \cite{Teboulle:asympt2003}. Therefore, problem \eqref{eq:subpConsL0-3} has an optimal solution for any $\Lambda\subseteq \mathbb{N}_n$. With the help of problem \eqref{eq:subpConsL0-3}, problem \eqref{eq:subpConsL0-2} can be equivalently written as
\begin{equation}\label{eq:subpConsL0-5}
\min\{\min\{\|Ax-b\|_p:\mathrm{supp}(x)\subseteq\Lambda, x\in\mathbb{R}^n\}:\Lambda\subseteq \mathbb{N}_n, \sharp\Lambda =k\}.
\end{equation}
Clearly, the optimal solution set of problem \eqref{eq:subpConsL0-5} is nonempty, therefore, problem \eqref{eq:subpConsL0-2} has an optimal solution. We then complete the proof.
\end{proof}

By Lemma \ref{lema:L0constSolu}, Definition \ref{def:sRhoOmega} is well defined. We present this result in the following proposition.

\begin{proposition}
 Given $\phi$ satisfying H\ref{hypo:1} and $p\in\{1, 2\}$, the integer $L$ and $s_i, \rho_i, \Omega_i$ for $i\in\mathbb{N}_{L}^0$ can be well defined by Definition \ref{def:sRhoOmega}.
\end{proposition}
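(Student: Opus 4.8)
The plan is to show that the iteration in Definition~\ref{def:sRhoOmega} terminates after finitely many steps and that every quantity appearing in it is well defined at each step, invoking Lemma~\ref{lema:L0constSolu} and the already-established solvability of problem~\eqref{eq:subpL0}. The argument proceeds by induction on the loop counter $i$, tracking the invariant that whenever the loop body is entered, the objects $\rho_i$, $s_i$ and $\Omega_i$ have been correctly defined as minima (or argmin) of problems that are known to be solvable.

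First I would treat the initialization ($i=0$). The quantity $\rho_0=\min\{\phi(\|Ax-b\|_p):x\in\mathbb{R}^n\}$ is exactly the minimum in problem~\eqref{eq:subpConsL0} with $k=n$ (since the constraint $\|x\|_0\le n$ is vacuous), so by Lemma~\ref{lema:L0constSolu} this minimum is attained and $\rho_0$ is a well-defined real number. Then the set $S:=\{x:\phi(\|Ax-b\|_p)=\rho_0\}$ is nonempty, so applying the solvability of problem~\eqref{eq:subpL0} to this $S$ shows that $s_0=\min\{\|x\|_0:x\in S\}$ is attained and $\Omega_0$ is a nonempty finite-cardinality-value argmin set. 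Thus $\rho_0,s_0,\Omega_0$ are well defined.

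For the inductive step, I would assume the loop has reached some $i$ with $s_i>0$ well defined, and show $\rho_{i+1},s_{i+1},\Omega_{i+1}$ are well defined. The key point is that $s_i\ge 1$ forces $s_i-1\in\mathbb{N}_n^0$, so $\rho_{i+1}=\min\{\phi(\|Ax-b\|_p):\|x\|_0\le s_i-1\}$ is again an instance of problem~\eqref{eq:subpConsL0} with $k=s_i-1$, whose minimum is attained by Lemma~\ref{lema:L0constSolu}. The feasible set for the inner $\ell_0$ minimization, $S:=\{x:\phi(\|Ax-b\|_p)=\rho_{i+1}\}$, is then nonempty, so problem~\eqref{eq:subpL0} yields a well-defined $s_{i+1}$ and nonempty $\Omega_{i+1}$. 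This closes the induction, so every iterate encountered is well defined.

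Finally I would establish termination, which is where the only real care is needed: I would argue that the sequence $s_i$ is strictly decreasing. Indeed, by construction $\rho_{i+1}$ is the optimal value over $\{\|x\|_0\le s_i-1\}$, and $s_{i+1}$ is the minimal cardinality among points achieving that value; since every such point satisfies $\|x\|_0\le s_i-1$, we get $s_{i+1}\le s_i-1<s_i$. As $s_0\le n$ and the $s_i$ are nonnegative integers strictly decreasing, the loop must exit (at the latest when some $s_i=0$) after at most $n+1$ steps, so $L=i$ is a finite nonnegative integer. The main obstacle is precisely this monotonicity-and-termination bookkeeping, i.e.\ verifying that the constraint $\|x\|_0\le s_i-1$ genuinely propagates the cardinality drop to $s_{i+1}$; the solvability of the subproblems themselves is immediate from Lemma~\ref{lema:L0constSolu} and the piecewise-constant structure of the $\ell_0$-norm.
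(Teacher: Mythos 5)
Your proposal is correct and follows essentially the same route as the paper: the paper reduces well-definedness of Definition~\ref{def:sRhoOmega} to the solvability of the two subproblems \eqref{eq:subpConsL0} and \eqref{eq:subpL0}, settling the first via Lemma~\ref{lema:L0constSolu} and the second by the piecewise-constant, finite-valued nature of the $\ell_0$-norm, exactly as you do. Your explicit induction and the termination argument via $s_{i+1}\le s_i-1$ merely spell out details the paper treats as immediate (cf.\ Item (ii) of Proposition~\ref{prop:rhoSOmega}).
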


We then provide some properties of $L$ and $s_i, \rho_i, \Omega_i$ for $ i \in\mathbb{N}_{L}^0$ defined by Definition \ref{def:sRhoOmega} in the following proposition.
\begin{proposition}\label{prop:rhoSOmega}
Given $\phi$ satisfying H\ref{hypo:1} and $p\in\{1, 2\}$, let $L$ and $s_i, \rho_i, \Omega_i$ for $i\in\mathbb{N}_{L}^0$ be defined by Definition \ref{def:sRhoOmega}. Then the following statements hold:
\begin{itemize}
\item [(i)] $0\le s_0\le \mathrm{rank}(A)$ and $0\le L\le \mathrm{rank}(A)$, where $\mathrm{rank}(A)$ denotes the rank of $A$.
\item [(ii)] $s_0>s_1>\dots>s_L=0$.
\item [(iii)] $0\le\rho_0<\rho_1<\dots<\rho_L=\phi(\|b\|_p)$.
\item [(iv)] $\Omega_L=\{\mathbf{0}_n\}$, $\Omega_i\neq \emptyset$ and $\Omega_i\cap \Omega_j=\emptyset$, for $i\neq j$, $i, j\in\mathbb{N}_L^0$.
\item [(v)] $\Omega_i=\{x: \|x\|_0=s_i, \phi(\|Ax-b\|_p)=\rho_i, x\in\mathbb{R}^n\}$ for $i \in\mathbb{N}_{L}^0$.
\end{itemize}
\end{proposition}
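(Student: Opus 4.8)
The plan is to prove the five assertions in an order that lets the later ones reuse the earlier ones, handling $p=1$ and $p=2$ simultaneously by invoking Lemma \ref{lema:L0constSolu} (and the trivial solvability of \eqref{eq:subpL0}) each time existence of a minimizer is needed. I would first record (v), which is little more than a restatement of the definition: $s_i$ is by construction the optimal value of $\min\{\|x\|_0:\phi(\|Ax-b\|_p)=\rho_i\}$ and $\Omega_i$ is its solution set, so $x\in\Omega_i$ holds exactly when $\phi(\|Ax-b\|_p)=\rho_i$ and $\|x\|_0=s_i$. This identity then serves as the workhorse for (iv).

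Next I would establish the two strict monotonicities. For (ii), fix $0\le i<L$; the loop condition gives $s_i>0$, so the problem defining $\rho_{i+1}$ has an optimal solution $x^{i+1}$ by Lemma \ref{lema:L0constSolu}, with $\|x^{i+1}\|_0\le s_i-1$ and $\phi(\|Ax^{i+1}-b\|_p)=\rho_{i+1}$; minimality of $s_{i+1}$ then forces $s_{i+1}\le\|x^{i+1}\|_0\le s_i-1<s_i$, and since the loop halts precisely when the current cardinality vanishes, $s_0>\cdots>s_L=0$. For (iii), the minimization defining $\rho_{i+1}$ is over a subset of the feasible set of the one defining $\rho_i$ (as $s_i<s_{i-1}$, and the $i=0$ feasible set is all of $\mathbb{R}^n$), so $\rho_i\le\rho_{i+1}$; strictness follows by contradiction, since $\rho_{i+1}=\rho_i$ would make $x^{i+1}$ a point attaining $\rho_i$ with $\|x^{i+1}\|_0<s_i$, against the defining minimality of $s_i$. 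Finally $\rho_0\ge0$ because $\phi\ge0$, and $s_L=0$ places $\mathbf{0}_n\in\Omega_L$, whence $\rho_L=\phi(\|A\mathbf{0}_n-b\|_p)=\phi(\|b\|_p)$.

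Assertion (i) carries the only genuine content. Since $\phi$ is nondecreasing and continuous, $\rho_0=\phi(t^*)$ with $t^*:=\min_x\|Ax-b\|_p$, and any residual minimizer also minimizes $\phi(\|Ax-b\|_p)$. Choosing $\Lambda\subseteq\mathbb{N}_n$ with $\sharp\Lambda=\mathrm{rank}(A)$ and $\mathrm{range}(A_\Lambda)=\mathrm{range}(A)$, the minimal residual over all of $\mathbb{R}^n$ coincides with the minimal residual over vectors supported in $\Lambda$; the solution of \eqref{eq:subpConsL0-4} on $\Lambda$ (available for both values of $p$ from the proof of Lemma \ref{lema:L0constSolu}) thus produces a minimizer of $\phi(\|Ax-b\|_p)$ with support in $\Lambda$, so $s_0\le\sharp\Lambda=\mathrm{rank}(A)$. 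The strictly decreasing integer chain from (ii) then yields $L\le s_0\le\mathrm{rank}(A)$.

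It remains to collect (iv): $\Omega_i\neq\emptyset$ is exactly the well-definedness already granted; $\Omega_L=\{\mathbf{0}_n\}$ follows from (v) together with $s_L=0$, since $\mathbf{0}_n$ is the unique vector of zero cardinality; and $\Omega_i\cap\Omega_j=\emptyset$ for $i\neq j$ is immediate from (v) and the distinctness of the $\rho_i$ proved in (iii). I expect the main obstacle to be (i)—in particular the reduction of the residual minimization to a full-column-rank subproblem on $\mathrm{rank}(A)$ columns, carried out uniformly in $p$—while the remaining claims reduce to short arguments that hinge on reading off the defining minimality of $s_i$ and on the nesting of the feasible sets.
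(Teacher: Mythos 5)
Your proposal is correct and follows essentially the same route as the paper: the paper dismisses items (ii)--(v) as immediate from Definition \ref{def:sRhoOmega} (your expanded justifications of the monotonicity, strictness, and disjointness are exactly the intended readings), and its proof of item (i) likewise reduces the residual minimization to a support of size $\mathrm{rank}(A)$ — it does so by rewriting $Ax^*$ for an arbitrary residual minimizer $x^*$ in a column basis of $A$, which is the same idea as your range-coincidence argument for $A_\Lambda$. No gaps.
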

\begin{proof}
Items (ii), (iii), (iv) and (v) are clear by Definition \ref{def:sRhoOmega}. We dedicate to proving Item (i) following.

  Let $\Omega^*:=\arg\min\{\|Ax-b\|_p: x\in\mathbb{R}^n\}$. From the proof of Lemma \ref{lema:L0constSolu}, $\Omega^*\neq \emptyset$. Since $\phi$ is nondecreasing, $\phi(\|Ax-b\|_p)=\rho_0$ for any $x\in\Omega^*$. Let $x^*\in\Omega^*$, then $\phi(\|Ax^*-b\|_p)=\rho_0$. Let $\alpha_i\in\mathbb{R}^m$ denote the $i$th column of $A$, $i\in\mathbb{N}_n$. Since the rank of $A$ is $\mathrm{rank}(A)$, without loss of generality, we assume $\{\alpha_i: i\in\mathbb{N}_{\rank(A)}\}$ is the maximal linearly independent group of $\{\alpha_i: i\in\mathbb{N}_n\}$. Then there exists $\{a_i\in\mathbb{R}: i\in\mathbb{N}_{\rank(A)}\}$ such that $Ax^*=\sum_{i=1}^{\mathrm{rank}(A)}a_i\alpha_i$. We define $\tilde x\in\mathbb{R}^n$ as $\tilde x_i=a_i$ for $i\in\mathbb{N}_{\rank(A)}$ and $\tilde x_i=0$ for $i=\mathrm{rank}(A)+1,\dots, n$. It is obvious that $\|\tilde x\|_0\le \mathrm{rank}(A)$ and $Ax^*=A\tilde x$, implying $\tilde x\in\Omega^*$. Therefore $\phi(\|A\tilde x-b\|_p)=\rho_0$. Then by the definition of $s_0$, we have $s_0\le \|\tilde x\|_0\le \mathrm{rank}(A)$. From the procedure of the iteration in Definition \ref{def:sRhoOmega} and $s_L=0$, we obtain $L\le \mathrm{rank}(A)$.
  \end{proof}

With the help of Definition \ref{def:sRhoOmega}, the Euclid space $\mathbb{R}^n$ can be partitioned into $L+1$ sets: $\{x: \|x\|_0\ge s_0, x\in\mathbb{R}^n\}$, $\{x: s_{i+1}\le\|x\|_0\le s_i-1, x\in\mathbb{R}^n\}$, $i  \in\mathbb{N}_{L-1}^0$. Therefore, in order to establish the existence of optimal solutions to problem \eqref{prob:penalty}, it suffices to prove the existence of optimal solutions to problem
\begin{equation}\label{eq:subPartition1}
\min\{\|x\|_0+\lambda\phi(\|Ax-b\|_p): \|x\|_0\ge s_0, x\in\mathbb{R}^n\}
\end{equation}
and
\begin{equation}\label{eq:subPartition2}
\min\{\|x\|_0+\lambda\phi(\|Ax-b\|_p): s_{i+1}\le\|x\|_0\le s_i-1, x\in\mathbb{R}^n\}
\end{equation}
for all $i \in\mathbb{N}_{L-1}^0$. We present these desired results in the following lemma.
\begin{lemma}\label{lema:partitionSolution}
Given $\phi$ satisfying H\ref{hypo:1} and $p\in\{1, 2\}$, let $L$ and $s_i, \rho_i, \Omega_i$ for $i\in\mathbb{N}_{L}^0$ be defined by Definition \ref{def:sRhoOmega}. Then  for any $\lambda>0$, the following statements hold:
\begin{itemize}
\item [(i)] The optimal solution sets of problems \eqref{eq:subPartition1} and \eqref{eq:subPartition2} for $i  \in\mathbb{N}_{L-1}^0$ are not empty.
\item [(ii)] The optimal value of problem \eqref{eq:subPartition1} is $s_0+\lambda\rho_0$ and the optimal solution set to problem \eqref{eq:subPartition1} is $\Omega_0$.
\item [(iii)] The optimal value of problem \eqref{eq:subPartition2} is $s_{i+1}+\lambda\rho_{i+1}$ and the optimal solution set of problem \eqref{eq:subPartition2} is $\Omega_{i+1}$, for $i  \in\mathbb{N}_{L-1}^0$.
\end{itemize}
\end{lemma}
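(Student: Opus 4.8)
The plan is to prove parts (ii) and (iii) directly, after which part (i) is immediate. The key observation is that, on each of the feasible regions in question, the two summands $\|x\|_0$ and $\lambda\phi(\|Ax-b\|_p)$ of the objective can be bounded below \emph{separately}, and that the elements of $\Omega_0$ (respectively $\Omega_{i+1}$) saturate both lower bounds at once. Since $\lambda>0$, a point attaining the sum of the two lower bounds must in fact attain each of them, which by the characterization in Proposition~\ref{prop:rhoSOmega}(v) forces it into $\Omega_0$ (respectively $\Omega_{i+1}$).

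For part (ii), I would argue as follows. Let $x$ be feasible for \eqref{eq:subPartition1}, so $\|x\|_0\ge s_0$. By the very definition of $\rho_0$ in Definition~\ref{def:sRhoOmega} we have $\phi(\|Ax-b\|_p)\ge\rho_0$ for every $x\in\mathbb{R}^n$, and multiplying by $\lambda>0$ gives $\lambda\phi(\|Ax-b\|_p)\ge\lambda\rho_0$. Adding the two inequalities yields $\|x\|_0+\lambda\phi(\|Ax-b\|_p)\ge s_0+\lambda\rho_0$. Conversely, any $x\in\Omega_0$ satisfies $\|x\|_0=s_0\ge s_0$ (so it is feasible) and attains equality in both bounds by Proposition~\ref{prop:rhoSOmega}(v); hence the optimal value equals $s_0+\lambda\rho_0$ and $\Omega_0$ is contained in the optimal solution set. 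Finally, if $x$ is any optimal solution, then $\|x\|_0\ge s_0$ and $\lambda\phi(\|Ax-b\|_p)\ge\lambda\rho_0$ together with $\|x\|_0+\lambda\phi(\|Ax-b\|_p)=s_0+\lambda\rho_0$ force $\|x\|_0=s_0$ and $\phi(\|Ax-b\|_p)=\rho_0$, whence $x\in\Omega_0$ by Proposition~\ref{prop:rhoSOmega}(v). This shows the optimal solution set is exactly $\Omega_0$.

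Part (iii) runs along identical lines, the only change being which lower bound is used for the residual term. Fix $i\in\mathbb{N}_{L-1}^0$ and let $x$ be feasible for \eqref{eq:subPartition2}, so $s_{i+1}\le\|x\|_0\le s_i-1$. Because $\|x\|_0\le s_i-1$, the point $x$ lies in the feasible set of the problem defining $\rho_{i+1}$ in Definition~\ref{def:sRhoOmega}, so $\phi(\|Ax-b\|_p)\ge\rho_{i+1}$; combined with $\|x\|_0\ge s_{i+1}$ this gives $\|x\|_0+\lambda\phi(\|Ax-b\|_p)\ge s_{i+1}+\lambda\rho_{i+1}$. To see that $\Omega_{i+1}$ is feasible I would invoke Proposition~\ref{prop:rhoSOmega}(ii): each $x\in\Omega_{i+1}$ has $\|x\|_0=s_{i+1}$, and since $s_{i+1}<s_i$ are integers we have $s_{i+1}\le s_i-1$, so $s_{i+1}\le\|x\|_0\le s_i-1$ holds. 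By Proposition~\ref{prop:rhoSOmega}(v) such $x$ attains both lower bounds, so the optimal value is $s_{i+1}+\lambda\rho_{i+1}$ and $\Omega_{i+1}$ lies in the optimal set; the reverse inclusion follows exactly as before from $\lambda>0$ and the saturation argument.

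With (ii) and (iii) in hand, part (i) is automatic: the optimal solution sets of \eqref{eq:subPartition1} and \eqref{eq:subPartition2} coincide with $\Omega_0$ and $\Omega_{i+1}$, which are nonempty by Proposition~\ref{prop:rhoSOmega}(iv). I do not expect a genuine obstacle here; the one point that needs care is the feasibility check in (iii), where the strict inequality $s_{i+1}<s_i$ must be upgraded to $s_{i+1}\le s_i-1$ using integrality, so that the elements of $\Omega_{i+1}$ indeed satisfy the upper cardinality constraint.
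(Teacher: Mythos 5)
Your proposal is correct and follows essentially the same route as the paper: bound the two summands separately on each piece of the partition, observe that $\Omega_0$ (resp.\ $\Omega_{i+1}$) saturates both bounds, and use Proposition \ref{prop:rhoSOmega}(v) to identify the optimal set. The paper phrases the reverse inclusion as a proof by contradiction on $\|x\|_0$ while you argue directly via saturation of both lower bounds, but this is a cosmetic difference.
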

\begin{proof}
We only need to prove Items (ii) and (iii) since they imply Item (i).

We first prove Item (ii). It is obvious that restricted to the set $\{x: \|x\|_0\ge s_0, x\in\mathbb{R}^n\}$, the minimal value of the first term $\|x\|_0$ is $s_0$ and can be attained at any $x\in\Omega_0$. By Definition \ref{def:sRhoOmega}, the minimal value of $\phi(\|Ax-b\|_p)$ is $\rho_0$ and can be attained at any $x\in\Omega_0$. Therefore, the optimal value of problem \eqref{eq:subPartition1} is $s_0+\lambda\rho_0$. Let $\Omega^*$ be the optimal solution set of problem \eqref{eq:subPartition1}. Clearly, $\Omega_0\subseteq \Omega^*$. We then try to prove $\Omega^*\subseteq \Omega_0$. It suffices to prove $\|x\|_0=s_0$ for any $x\in\Omega^*$. If not, there exists  $x^*\in\Omega^*$ and $\|x^*\|_0\neq s_0$, then $\|x^*\|_0\ge s_0+1$. Then the objective function value at $x^*$ is no less than $s_0+1+\lambda\rho_0$ due to the definition of $\rho_0$, contradicting the fact that $x^*$ is an optimal solution of problem \eqref{eq:subPartition1}. Then we get Item (ii).
Item (iii) can be obtained similarly, we omit the details here.
\end{proof}

For convenient presentation, we define $f_i:\mathbb{R}_+\rightarrow \mathbb{R}$, for $i \in\mathbb{N}_{L}^0$, at $\lambda\ge 0$ as
\begin{equation}\label{def:fi}
f_i(\lambda):=s_i+\lambda\rho_i.
\end{equation}
Now, we are ready to show the existence of optimal solutions to problem \eqref{prob:penalty}.
\begin{theorem}\label{thm:exist}
 Given $\phi$ satisfying H\ref{hypo:1} and $p\in\{1, 2\}$, let $L$ and $s_i, \rho_i, \Omega_i$ for $i\in\mathbb{N}_{L}^0$ be defined by Definition \ref{def:sRhoOmega}. Let $f_i:\mathbb{R}_+\rightarrow\mathbb{R}$ for $i\in\mathbb{N}_L^0$ be defined by \eqref{def:fi}. Then, the optimal solution set of problem \eqref{prob:penalty} is nonempty for any $\lambda>0$. Furthermore, the following statements hold for any fixed $\lambda>0$:
 \begin{itemize}
 \item [(i)] The optimal value of problem \eqref{prob:penalty} is $\min\{f_i(\lambda): i \in\mathbb{N}_{L}^0\}$.
\item [(ii)] $\bigcup_{i\in\Lambda^*}\Omega_{i}$ is  the optimal solution set of problem \eqref{prob:penalty}, where $\Lambda^*:=\arg\min\{f_i(\lambda): i \in\mathbb{N}_{L}^0\}$.
    \end{itemize}
\end{theorem}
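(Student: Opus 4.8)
The plan is to exploit the partition of $\mathbb{R}^n$ into the $L+1$ cardinality level sets described just before \eqref{eq:subPartition1}, and to reduce the global minimization in \eqref{prob:penalty} to a finite comparison among the piecewise minima already computed in Lemma \ref{lema:partitionSolution}. Concretely, since $s_0>s_1>\cdots>s_L=0$ by Proposition \ref{prop:rhoSOmega}(ii), the sets $\{x:\|x\|_0\ge s_0\}$ and $\{x:s_{i+1}\le\|x\|_0\le s_i-1\}$ for $i\in\mathbb{N}_{L-1}^0$ are pairwise disjoint and cover $\mathbb{R}^n$, because every admissible cardinality in $\{0,1,\dots,n\}$ falls into exactly one of these ranges. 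Hence minimizing the objective $\|x\|_0+\lambda\phi(\|Ax-b\|_p)$ over $\mathbb{R}^n$ amounts to taking the smallest of the minima over these pieces.

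First I would invoke Lemma \ref{lema:partitionSolution}: its item (ii) gives that the minimum of the objective over $\{x:\|x\|_0\ge s_0\}$ equals $s_0+\lambda\rho_0=f_0(\lambda)$ and is attained exactly on $\Omega_0$, while item (iii) gives that the minimum over $\{x:s_{i+1}\le\|x\|_0\le s_i-1\}$ equals $s_{i+1}+\lambda\rho_{i+1}=f_{i+1}(\lambda)$ and is attained exactly on $\Omega_{i+1}$. As $i$ runs over $\mathbb{N}_{L-1}^0$ the shifted index $i+1$ runs over $\{1,\dots,L\}$, so together with the piece indexed by $0$ these minima are precisely $\{f_i(\lambda):i\in\mathbb{N}_L^0\}$. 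Taking the smallest of them yields the optimal value $\min\{f_i(\lambda):i\in\mathbb{N}_L^0\}$, which is statement (i); in particular this certifies that the optimal solution set of \eqref{prob:penalty} is nonempty, since $\Lambda^*$ is the argmin over a finite nonempty set and each $\Omega_i$ is nonempty by Proposition \ref{prop:rhoSOmega}(iv).

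For statement (ii), I would characterize global optimality by the piece in which a point lies. Write $v^*:=\min\{f_i(\lambda):i\in\mathbb{N}_L^0\}$. Any $x\in\mathbb{R}^n$ belongs to exactly one piece, whose minimum value is $f_j(\lambda)$ for a unique $j\in\mathbb{N}_L^0$; then the objective at $x$ is at least $f_j(\lambda)$, with equality if and only if $x\in\Omega_j$, again by Lemma \ref{lema:partitionSolution}. Consequently $x$ is a global minimizer, i.e. its objective value equals $v^*$, precisely when $f_j(\lambda)=v^*$ and $x\in\Omega_j$, which is equivalent to $x\in\bigcup_{i\in\Lambda^*}\Omega_i$. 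This establishes that the optimal solution set is exactly $\bigcup_{i\in\Lambda^*}\Omega_i$.

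The argument is essentially a bookkeeping reduction, so I do not expect a genuine obstacle; the one point that must be handled with care is the \emph{exactness} of the solution-set description rather than a mere inclusion. This relies on Lemma \ref{lema:partitionSolution} identifying $\Omega_i$ as the \emph{entire} minimizer set of each piece, so that a point lying in a value-minimizing piece but outside the corresponding $\Omega_i$ has strictly larger objective value and is correctly excluded, together with the disjointness $\Omega_i\cap\Omega_j=\emptyset$ for $i\neq j$ from Proposition \ref{prop:rhoSOmega}(iv), which guarantees the union is not over-counted and the characterization is tight.
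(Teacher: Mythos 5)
Your proof is correct and follows exactly the route the paper indicates: it partitions $\mathbb{R}^n$ into the $L+1$ cardinality level sets and applies Lemma \ref{lema:partitionSolution} to reduce the problem to comparing the values $f_i(\lambda)$, which is precisely the argument the paper sketches when it omits the proof of Theorem \ref{thm:exist}. Your write-up merely fills in the bookkeeping the paper leaves implicit.
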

We omit the proof here since
it is a direct result of the fact that  $\mathbb{R}^n=\bigcup_{i=0}^{L-1}\{x:s_{i+1}\le\|x\|_0\le s_{i}-1, x\in\mathbb{R}^n\}\bigcup\{x:\|x\|_0\ge s_0, x\in\mathbb{R}^n\}$ and Lemma \ref{lema:partitionSolution}.

\begin{remark}\label{remark:L=0}
 According to Definition \ref{def:sRhoOmega}, if $L=0$, then $s_0=s_L=0$ and $\Omega_0=\Omega_L=\{\mathbf{0}_n\}$.  Then, by Theorem \ref{thm:exist}, if $L=0$, the optimal value and the optimal solution set of problem \eqref{prob:penalty} are $\rho_0\lambda$ and $\{\mathbf{0}_n\}$ respectively for any $\lambda>0$.
\end{remark}
\section{Stability for problems \eqref{prob:constrain} and \eqref{prob:penalty}}\label{sec:sta}
In this section, we study the stability for problems \eqref{prob:constrain} and \eqref{prob:penalty}, including behaviors of the optimal values and optimal solution sets with respect to changes in the corresponding parameters. We prove that, the optimal value of problem \eqref{prob:constrain} and that of problem \eqref{prob:penalty} change piecewise constantly and piecewise linearly respectively as the corresponding parameters vary. Moreover, we prove that the optimal solution set of problem   \eqref{prob:penalty} is piecewise constant with respect to the parameter $\lambda$.

We begin with introducing the notion of marginal functions, which plays an important role in optimization theory, see for example \cite{Teboulle:asympt2003}. Set
\begin{equation}\label{def:sigmaStar}
\sigma^*:=\min\{\|Ax-b\|_p: x\in\mathbb{R}^n\},
 \end{equation}
 where $p\in\{1, 2\}$. By Lemma \ref{lema:L0constSolu}, we have $\sigma^*\ge 0$ is well defined.   Let $H: [\sigma^*, +\infty)\rightarrow \mathbb{R}$ defined at $\sigma\ge \sigma^*$ as
\begin{equation}\label{def:H}
H(\sigma):=\min\{\|x\|_0: x\in G_\sigma\},
\end{equation}
where $G_\sigma$ is defined by \eqref{def:G}.
Clearly, for a fixed $\sigma\ge \sigma^*$, $H(\sigma)$ is the optimal value of problem \eqref{prob:constrain}. Thus $H$ is well defined due to  Theorem \ref{thm:existConst}.  The function $H$ is also called the marginal function of problem \eqref{prob:constrain}.
Similarly, we can define the marginal function of problem \eqref{prob:penalty}.  Given $\phi$ satisfying H\ref{hypo:1} and $p\in\{1, 2\}$, we define $F:(0, +\infty)\rightarrow\mathbb{R}$  at $\lambda>0$ as
\begin{equation}\label{def:F}
F(\lambda):=\min\{\|x\|_0+\lambda\phi(\|Ax-b\|_p): x\in\mathbb{R}^n\}.
\end{equation}
Clearly,  $F$ is well defined due to  Theorem \ref{thm:exist}.

In order to study the stability of the optimal solution sets of problems \eqref{prob:constrain} and \eqref{prob:penalty}, we  define $\widehat \Omega: [\sigma^*, +\infty)\rightarrow 2^{\mathbb{R}^n}$ at $\sigma\ge \sigma^*$ as
 the optimal solution set of the constrained  problem \eqref{prob:constrain}, that is,
\begin{equation}\label{def:omegaSharp}
\widehat \Omega(\sigma):=\arg\min\{\|x\|_0: x\in G_\sigma\},
\end{equation}
where $G_\sigma$ is defined by \eqref{def:G} and $2^{\mathbb{R}^n}$ collects all the subsets of $\mathbb{R}^n$. By Theorem \ref{thm:existConst}, $\widehat \Omega$ is well defined.
We also define  $\Omega:(0, +\infty)\rightarrow 2^{\mathbb{R}^n}$ at $\lambda>0$ as
\begin{equation}\label{def:Omega}
\Omega(\lambda):=\arg\min\{\|x\|_0+\lambda\phi(\|Ax-b\|_p): x\in\mathbb{R}^n\}.
\end{equation}
For a fixed $\lambda>0$, $\Omega(\lambda)$ is the optimal solution set of problem \eqref{prob:penalty}, therefore, is also well defined due to Theorem \ref{thm:exist}.  Then, our task in this section is establishing the properties of  functions $H$ and $F$ as well as properties of  mappings $\widehat \Omega$ and ${\Omega}$.
\subsection{Stability for problem \eqref{prob:constrain}}
This subsection is devoted to the stability for problem \eqref{prob:constrain}. We explore the behaviors of the marginal function $H$ and optimal solution set $\widehat \Omega$ with respect to $\sigma$.

We begin with a lemma which is crucial in our discussion.
\begin{lemma}\label{lemma:Lambdastrict}
Given $\phi$ satisfying H\ref{hypo:1} and $p\in\{1, 2\}$, let $L$ and $s_i, \rho_i, \Omega_i$ for $i\in\mathbb{N}_{L}^0$ be defined by Definition \ref{def:sRhoOmega}. For any $x\in\Omega_i$, $i \in\mathbb{N}_{L}^0$, let $\Lambda:=\mathrm{supp}(x)$. Then the following hold:
\begin{itemize}
\item [(i)] $A_\Lambda$ has full column rank.
\item [(ii)] If $\sharp\{z:\phi(z)=\rho_i\}=1$, then $x_\Lambda$ is the unique  minimizer of $\min\{\|A_\Lambda y-b\|_2: y\in\mathbb{R}^{s_i}\}$, that is
    for any $x_\Lambda\neq y\in\mathbb{R}^{s_k}$, $\|A_\Lambda y-b\|_2>\|A_\Lambda x_\Lambda-b\|_2$.
\end{itemize}
\end{lemma}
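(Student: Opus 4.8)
The plan is to lean on the minimality built into Definition~\ref{def:sRhoOmega}, together with Proposition~\ref{prop:rhoSOmega}(v), which guarantees that every $x\in\Omega_i$ satisfies $\|x\|_0=s_i$ and $\phi(\|Ax-b\|_p)=\rho_i$; in particular $\sharp\Lambda=s_i$. For item (i), I would argue by contradiction. If $A_\Lambda$ were column-rank-deficient there would be a nonzero $v\in\mathbb{R}^{s_i}$ with $A_\Lambda v=\mathbf{0}_m$. Zero-padding $v$ outside $\Lambda$ to a vector $\tilde v\in\mathbb{R}^n$ and moving along the line $x+t\tilde v$, the residual $\|A(x+t\tilde v)-b\|_p=\|Ax-b\|_p$ stays constant, so $\phi(\|A(x+t\tilde v)-b\|_p)=\rho_i$ for all $t$. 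Picking an index $j$ with $v_j\neq 0$ (necessarily $j\in\Lambda$, so $x_j\neq 0$) and setting $t=-x_j/v_j$ annihilates the $j$th coordinate while keeping the support inside $\Lambda\setminus\{j\}$, producing $x'$ with $\|x'\|_0\le s_i-1$ yet $\phi(\|Ax'-b\|_p)=\rho_i$. This contradicts $s_i=\min\{\|x\|_0:\phi(\|Ax-b\|_p)=\rho_i\}$. Hence $A_\Lambda$ has full column rank (the case $s_i=0$ being vacuous).

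For item (ii), I would first convert the hypothesis $\sharp\{z:\phi(z)=\rho_i\}=1$ into a residual statement. Let $z_0$ be the unique point with $\phi(z_0)=\rho_i$; then $\|Ax-b\|_p=z_0$. The core reduction is to show $x_\Lambda$ minimizes $\|A_\Lambda y-b\|_p$ over $y\in\mathbb{R}^{s_i}$. To this end, take any $\hat x$ with $\mathrm{supp}(\hat x)\subseteq\Lambda$, so $\|\hat x\|_0\le s_i$. When $i=0$ every vector is admissible in the global minimum defining $\rho_0$, and when $i\ge1$ the strict decrease $s_i\le s_{i-1}-1$ from Proposition~\ref{prop:rhoSOmega}(ii) makes $\hat x$ feasible for the problem defining $\rho_i$; either way $\phi(\|A\hat x-b\|_p)\ge\rho_i$. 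If $\|A\hat x-b\|_p<z_0$, monotonicity of $\phi$ gives $\phi(\|A\hat x-b\|_p)\le\phi(z_0)=\rho_i$, hence equality, and the single-point hypothesis then forces $\|A\hat x-b\|_p=z_0$, a contradiction. Therefore $\|A\hat x-b\|_p\ge z_0=\|A_\Lambda x_\Lambda-b\|_p$, so $x_\Lambda$ is a minimizer of $\|A_\Lambda\,\cdot\,-b\|_p$.

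It then remains to upgrade this to the strict, unique $\ell_2$ conclusion, and here I would invoke item (i): since $A_\Lambda$ has full column rank, $y\mapsto\|A_\Lambda y-b\|_2^2$ is strictly convex, so $\min\{\|A_\Lambda y-b\|_2:y\in\mathbb{R}^{s_i}\}$ has the single minimizer $A_\Lambda^\dag b$. For $p=2$ the preceding paragraph already identifies $x_\Lambda$ as a minimizer of $\|A_\Lambda\,\cdot\,-b\|_2$, whence $x_\Lambda=A_\Lambda^\dag b$ and strict convexity yields $\|A_\Lambda y-b\|_2>\|A_\Lambda x_\Lambda-b\|_2$ for every $y\neq x_\Lambda$; for $p=1$ the same argument delivers the analogous statement for $\|\cdot\|_1$. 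The step I expect to be the most delicate is the residual translation in the second paragraph: it is exactly the single-point condition $\sharp\{z:\phi(z)=\rho_i\}=1$ that forbids $\phi$ from being flat near $z_0$ and thereby rules out a strictly smaller residual sharing the value $\rho_i$. Without this hypothesis a plateau of $\phi$ would permit $\|A\hat x-b\|_p<z_0$ with $\phi(\|A\hat x-b\|_p)=\rho_i$, and the minimality of $x_\Lambda$ — hence the uniqueness — would fail.
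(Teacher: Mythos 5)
Your argument follows essentially the same route as the paper's: item (i) is the same rank-deficiency contradiction (you merely construct the low-support witness $x'$ explicitly where the paper just asserts its existence), and item (ii) uses the same two steps, namely that the single-value hypothesis $\sharp\{z:\phi(z)=\rho_i\}=1$ converts $\phi$-minimization into residual minimization so that $x_\Lambda$ minimizes $\|A_\Lambda\cdot-b\|_p$, after which full column rank from item (i) makes $y\mapsto\|A_\Lambda y-b\|_2^2$ strictly convex and forces uniqueness. One caution on your closing aside: for $p=1$ strict convexity is unavailable ($\|A_\Lambda y-b\|_1$ is piecewise linear even when $A_\Lambda$ has full column rank), so the ``analogous $\ell_1$ statement'' with uniqueness does not follow from your argument --- but the lemma's stated conclusion concerns the $\ell_2$ problem, it is only ever invoked with $p=2$ (Proposition \ref{prop:partialEauvalence}(iii) and Proposition \ref{prop:cardiP2}), and the paper's own proof glosses over the $p=1$ case in exactly the same way, so this does not affect the substance.
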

\begin{proof}
We first prove Item (i). We prove it by contradiction. If not, there exists $x'\in\mathbb{R}^n$ such that $\|x'\|_0<s_i$ and $x'_{\mathbb{N}_n\backslash\Lambda}=\mathbf{0}_{n-s_i}$ such that $Ax'=Ax$. Therefore, $\phi(\|Ax'-b\|_p)=\phi(\|Ax-b\|_p)=\rho_i$.  This contradicts the definition of $s_i$. Then we get Item (i).

We next prove Item (ii). Since $\sharp\{z:\phi(z)=\rho_i\}=1$ and $\phi$ is nondecreasing, it follows that  $\arg\min\{\phi(\|Ax-b\|_p):\|x\|_0\le s_{i-1}-1\}=\arg\min\{\|Ax-b\|_p:\|x\|_0\le s_{i-1}-1\}$. Thus, $x\in\arg\min\{\|A_\Lambda y-b\|_p: y\in\mathbb{R}^{s_i}\}$ for any $x\in\Omega_i$. The problem $\min\{\|A_\Lambda y-b\|_2: y\in\mathbb{R}^{s_i}\}$ is equivalent to the problem \begin{equation}\label{eq:14}
\min\{\|A_\Lambda y-b\|_2^2: y\in\mathbb{R}^{s_i}\}.
\end{equation} Then from Item (i),  the objective function of the convex optimization problem \eqref{eq:14} is strictly convex. Therefore, we obtain Item (ii).
\end{proof}

We next show an important proposition which reveals the relationship between $\Omega_i$ and the optimal solution set of problem \eqref{prob:constrain}.
Clearly, when $\sigma\ge \|b\|_p$, $\widehat\Omega(\sigma)=\Omega_L=\{\mathbf{0}_n\}$. Thus, we only discuss cases as $\sigma<\|b\|_p$.
\begin{proposition}\label{prop:partialEauvalence}
Given $\phi$ satisfying H\ref{hypo:1} and $p\in\{1, 2\}$, let $L$ and $s_i, \rho_i, \Omega_i$ for $i\in\mathbb{N}_{L}^0$ be defined by Definition \ref{def:sRhoOmega}.  Let $\widehat \Omega$, $G_\sigma$ and $\sigma^*$ be defined by \eqref{def:omegaSharp}, \eqref{def:G} and \eqref{def:sigmaStar} respectively. Suppose  $\phi$ is strictly increasing. Then, the following statements hold:
\begin{itemize}
\item [(i)]  For any $\sigma\in [\sigma^*, \|b\|_p)$, there must exist $k\in \mathbb{N}_{L-1}^0$ such that $\rho_k\le\phi(\sigma)<\rho_{k+1}$, and $\Omega_k$ is a subset of $\widehat \Omega(\sigma)$, that is
    \begin{equation}\label{eq:13}
    \Omega_k\subseteq \widehat \Omega(\sigma),
    \end{equation}
    moreover, $\|y\|_0=s_k$ for any $y\in\widehat \Omega(\sigma)$.
    \end{itemize}
\begin{itemize}
\item [(ii)] Particularly, if the $k$ in Item (i) satisfies $\rho_k=\phi(\sigma)$, then $\widehat \Omega(\sigma)$ is equal to $\Omega_k$, that is
    \begin{equation}\label{eq:10}
    \widehat \Omega(\sigma)=\Omega_k.
    \end{equation}
\item [(iii)] Further, if $p=2$ and the $k$ in Item (i) satisfies $\rho_k<\phi(\sigma)<\rho_{k+1}$, then $\Omega_k$ is a real subset of $\widehat \Omega(\sigma)$, that is
    \begin{equation}\label{eq:9}
    \Omega_k\subset \widehat \Omega(\sigma).
    \end{equation}
    \end{itemize}
\end{proposition}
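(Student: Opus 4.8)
The plan is to prove the three items in order, leaning on the piecewise structure recorded in Proposition \ref{prop:rhoSOmega}. The workhorse facts are: the characterization $\Omega_i=\{x:\|x\|_0=s_i,\ \phi(\|Ax-b\|_p)=\rho_i\}$ from item (v); the strictly monotone chains $s_0>s_1>\cdots>s_L=0$ and $\rho_0<\rho_1<\cdots<\rho_L=\phi(\|b\|_p)$ from items (ii)--(iii); and the observation that, because $\phi$ is strictly increasing (hence injective), $\phi(a)\le\phi(c)$ is equivalent to $a\le c$, so that the $\phi$-minimizations in Definition \ref{def:sRhoOmega} are genuine constraints on the residual $\|Ax-b\|_p$.

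For item (i), I would first locate $k$. For $\sigma\in[\sigma^*,\|b\|_p)$ strict monotonicity gives $\phi(\sigma)\in[\phi(\sigma^*),\phi(\|b\|_p))=[\rho_0,\rho_L)$, so the strict chain $\rho_0<\cdots<\rho_L$ yields a unique $k\in\mathbb{N}_{L-1}^0$ with $\rho_k\le\phi(\sigma)<\rho_{k+1}$. I would then prove $\Omega_k\subseteq\widehat\Omega(\sigma)$ through feasibility and optimality. Feasibility: any $x\in\Omega_k$ has $\phi(\|Ax-b\|_p)=\rho_k\le\phi(\sigma)$, hence $\|Ax-b\|_p\le\sigma$, i.e. $x\in G_\sigma$. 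Optimality follows from the lower bound $\|y\|_0\ge s_k$ for every $y\in G_\sigma$: feasibility gives $\phi(\|Ay-b\|_p)\le\phi(\sigma)<\rho_{k+1}$, while if $\|y\|_0\le s_k-1$ the definition $\rho_{k+1}=\min\{\phi(\|Ax-b\|_p):\|x\|_0\le s_k-1\}$ (legitimate since $s_k\ge s_{L-1}\ge1$) would force $\phi(\|Ay-b\|_p)\ge\rho_{k+1}$, a contradiction. Thus $H(\sigma)=s_k$, every $x\in\Omega_k$ is optimal, and every optimal $y$ satisfies $\|y\|_0=s_k$, which is the ``moreover'' claim.

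For item (ii), with $\Omega_k\subseteq\widehat\Omega(\sigma)$ already in hand, I only need the reverse inclusion. Given $y\in\widehat\Omega(\sigma)$ we have $\|y\|_0=s_k$ and, from feasibility, $\phi(\|Ay-b\|_p)\le\phi(\sigma)=\rho_k$. When $k\ge1$, the strict chain gives $s_k\le s_{k-1}-1$, so $y$ is admissible in the minimization defining $\rho_k$ and hence $\phi(\|Ay-b\|_p)\ge\rho_k$ (the case $k=0$ uses instead that $\rho_0$ is the global minimum of $\phi(\|Ax-b\|_p)$); combining the two inequalities gives $\phi(\|Ay-b\|_p)=\rho_k$, and item (v) of Proposition \ref{prop:rhoSOmega} yields $y\in\Omega_k$.

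I expect item (iii) to be the main obstacle, and it is the only place where $p=2$ is essential. Here $\rho_k<\phi(\sigma)$ and strict monotonicity produce a strict slack $\|Ax-b\|_2<\sigma$ for $x\in\Omega_k$. Writing $\Lambda:=\mathrm{supp}(x)$, Lemma \ref{lemma:Lambdastrict}(i) gives that $A_\Lambda$ has full column rank, and since strict monotonicity forces $\sharp\{z:\phi(z)=\rho_k\}=1$, Lemma \ref{lemma:Lambdastrict}(ii) shows $x_\Lambda$ is the least-squares solution, hence satisfies the normal equations. Then for any $v\in\mathbb{R}^{s_k}\setminus\{\mathbf{0}_{s_k}\}$ the perturbation $y$ with $y_\Lambda=x_\Lambda+tv$ and $y_{\mathbb{N}_n\setminus\Lambda}=\mathbf{0}_{n-s_k}$ satisfies $\|Ay-b\|_2^2=\|Ax-b\|_2^2+t^2\|A_\Lambda v\|_2^2$, the cross term vanishing by the normal equations and $A_\Lambda v\neq\mathbf{0}_m$ by full rank. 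Choosing $t\neq0$ small enough that the right-hand side stays below $\sigma^2$ (possible thanks to the strict slack) makes $y$ feasible; since $\mathrm{supp}(y)\subseteq\Lambda$ we get $\|y\|_0\le s_k$, and item (i) forces $\|y\|_0=s_k$, so $y\in\widehat\Omega(\sigma)$. But $\|Ay-b\|_2>\|Ax-b\|_2$ gives $\phi(\|Ay-b\|_2)>\rho_k$, so $y\notin\Omega_k$ by item (v), proving $\Omega_k\subsetneq\widehat\Omega(\sigma)$. The delicate points are the full-column-rank/strict-convexity input, which is exactly what breaks down for $p=1$, and checking that the quadratic expansion together with the feasibility budget can be met simultaneously; the rest is routine bookkeeping.
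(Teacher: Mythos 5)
Your proposal is correct and follows essentially the same route as the paper's proof: item (i) by feasibility of $\Omega_k$ plus a contradiction with the definition of $\rho_{k+1}$, item (ii) by squeezing $\phi(\|Ay-b\|_p)$ between the feasibility bound and the definition of $\rho_k$ and invoking Proposition \ref{prop:rhoSOmega}(v), and item (iii) by perturbing an $x\in\Omega_k$ within its support, using the strict slack to keep feasibility and Lemma \ref{lemma:Lambdastrict} to force a strictly larger residual. The only cosmetic difference is that in item (iii) you make the feasibility of the perturbation explicit via the normal equations and a quadratic expansion, whereas the paper gets it from continuity of $\phi$; both hinge on the same full-column-rank/strict-convexity input.
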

\begin{proof}
We first prove Item (i). Since $\phi$ is strictly increasing,  $\rho_{L}=\phi(\|b\|_p)$ and $\rho_0=\phi(\sigma^*)$, there must exist $k\in \mathbb{N}_{L-1}^0$ such that $\rho_k\le\phi(\sigma)<\rho_{k+1}$ due to Item (iii) of Proposition \ref{prop:rhoSOmega}. We then prove $\Omega_k\subseteq \widehat \Omega(\sigma)$. Let $x\in\Omega_k$, we will show $x\in\widehat \Omega(\sigma)$.
Since $\rho_k=\phi(\|Ax-b\|_p)$, $\rho_k\le \phi(\sigma)$ and $\phi$ is strictly increasing, we have $x\in G_\sigma$.
In order to prove $x\in\widehat \Omega(\sigma)$, we need to show that for any $y\in G_\sigma$ there holds $\|x\|_0\le \|y\|_0$.  If not,  there exists $y\in G_\sigma$ such that $\|y\|_0\le \|x\|_0-1=s_k-1$. By the definition of $\rho_{k+1}$, we have $\rho_{k+1}\le \phi(\|Ay-b\|)\le \phi(\sigma)$, which contradicts the fact that $\rho_{k+1}>\phi(\sigma)$. Therefore, $x\in\Omega_k$ implies $x\in\widehat \Omega(\sigma)$. Moreover, since $\|x\|_0=s_k$ we have $H(\sigma)=s_k$, where $H$ is the marginal function of problem \eqref{prob:constrain}, defined by \eqref{def:H}. Therefore, we have $\|y\|_0 =H(\sigma)=s_k$ for any $y\in\widehat \Omega(\sigma)$. We then get Item (i).

Next, we try to prove Item (ii). By Item (i) of this proposition, we have $\|y\|_0=s_k$ for any $y\in\widehat \Omega(\sigma)$. In order to prove \eqref{eq:10}, we only need to show $\phi(\|Ay-b\|_p)=\rho_k$ for any $y\in\widehat \Omega(\sigma)$ due to Item (v) of Proposition \ref{prop:rhoSOmega}. We next show it by contradiction. If there exists $y^*\in\widehat \Omega(\sigma)$ such that $\phi(\|Ay^*-b\|_p)\neq\rho_k$, then
\begin{equation}\label{eq:12}
\phi(\|Ay^*-b\|_p)<\rho_k
\end{equation}
due to $y^*\in G_\sigma$, the strictly increasing property of  $\phi$ and $\rho_k=\phi(\sigma)$. Since $\|y^*\|_0=s_k\le s_{k-1}-1$, by the definition of $\rho_k$, we have $\rho_k\le \phi(\|Ay^*-b\|_p)$, contradicting \eqref{eq:12}. Thus we get Item (ii) of this proposition.

Finally, we prove Item (iii).
 We only need to show that there exists $y\in\widehat \Omega(\sigma)$ such that $y\not\in\Omega_k$. Let $x\in\Omega_k$, we have $\|x\|_0=s_k\ge 1$. Let $\Lambda:=\mathrm{supp}(x)$.
Since $\phi$ is continuous and $\rho_k=\phi(\|Ax-b\|_p)<\phi(\sigma)$, there exists $\delta_1>0$ such that as long as $\|x-x'\|_2\le \delta_1$, $\phi(\|Ax'-b\|_p)\le \phi(\sigma)$. Let $\delta_2:=\frac{1}{2}\min\{|x_i|: i\in\Lambda\}$. Then $\delta_2>0$. Set $\delta:=\min\{\delta_1, \delta_2\}$. Set $y\in S:=\{x': \|x'-x\|_2\le \delta, x_{\mathbb{N}_n\backslash\Lambda}'=\mathbf{0}_{n-s_k}, x'\neq x, x'\in\mathbb{R}^n \}$. Obviously, $S\neq \emptyset$,  $\|y\|_0=\|x\|_0=s_k$ and $\phi(\|Ay-b\|_p)\le \phi(\sigma)$. Since $\phi$ is strictly increasing, we have $y\in G_\sigma$. By Item (i) of this proposition, $y\in\widehat \Omega(\sigma)$ due to $\|y\|_0=s_k$ and $y\in G_\sigma$. We then show $y\not\in\Omega_k$. It amounts to showing $\phi(\|Ay-b\|_p)\neq \phi(\|Ax-b\|_p)$ by Item (v) of Proposition \ref{prop:rhoSOmega}. By Item (ii) of Lemma \ref{lemma:Lambdastrict}, we have $\|Ay-b\|_p>\|Ax-b\|_p$ since  $y$ and $x$ share the same support $\Lambda$. Then by the strictly increasing property of $\phi$ we get $y\not\in\Omega_k$. Then Item (iii) of this proposition follows.
\end{proof}

\begin{remark}\label{remark:same}
 One can easily check that Definition \ref{def:sRhoOmega} produces the same $L$ and $s_i, \Omega_i$ for $i\in\mathbb{N}_{L}^0$ for different choices of $\phi$ once they are all strictly increasing. Therefore, it is reasonable that $\widehat \Omega(\sigma)$ has connections with $\Omega_i$ for $i\in\mathbb{N}_{L}^0$, as shown in Proposition \ref{prop:partialEauvalence}.
\end{remark}

Now, we are ready to establish the main results of this subsection.
\begin{theorem}\label{thm:staConst}
Given $\phi:\mathbb{R}_+\rightarrow \mathbb{R}$ defined at $z\ge0$ as $\phi(z)=z$ and $p\in\{1, 2\}$, let $L$ and $s_i, \rho_i, \Omega_i$ for $i\in\mathbb{N}_{L}^0$ be defined by Definition \ref{def:sRhoOmega}. Let  $H$ and $\widehat \Omega$ be defined by   \eqref{def:H} and \eqref{def:omegaSharp} respectively.  Then the following statements hold:
\begin{itemize}
\item [(i)]
$$
H(\sigma)=\begin{cases}
s_i, &\mathrm{~~if~~}\sigma\in[\rho_i, \rho_{i+1}), i\in\mathbb{N}_{L-1}^0,\\
0, &\mathrm{~~if~~}\sigma\ge \|b\|_p.
\end{cases}
$$
\item [(ii)]
$$
\begin{cases}
\widehat\Omega(\sigma)=\Omega_i, &\mathrm{~~if~~}\sigma=\rho_i, i\in\mathbb{N}_{L-1}^0,\\
\Omega_i\subseteq \widehat\Omega(\sigma),&\mathrm{~~if~~}\sigma\in(\rho_i, \rho_{i+1}), i\in\mathbb{N}_{L-1}^0,\\
\widehat\Omega(\sigma)=\{\mathbf{0}_n\},&\mathrm{~~if~~}\sigma\ge\|b\|_p.
\end{cases}
$$
In particular, if $p=2$, we have $\Omega_i\subset\widehat\Omega(\sigma)$ for $\sigma\in(\rho_i, \rho_{i+1})$, $i\in\mathbb{N}_{L-1}^0$.
\item [(iii)] $H$ is a piecewise constant function and  has at most $\rank(A)+1$ values. Further, $H$ is lower semicontinuous, right-continuous and nonincreasing.
\item [(iv)] $\widehat \Omega(\sigma_1)\subseteq\widehat \Omega(\sigma_2)$ for any $\sigma_1, \sigma_2\in[\rho_i, \rho_{i+1})$ satisfying $\sigma_1\le \sigma_2$, $i\in\mathbb{N}_{L-1}^0$.
\end{itemize}
\end{theorem}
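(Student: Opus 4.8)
The plan is to deduce everything from the already-established Propositions \ref{prop:rhoSOmega} and \ref{prop:partialEauvalence}. First I would observe that the choice $\phi(z)=z$ is continuous and strictly increasing with $\phi(0)=0$, so it satisfies H\ref{hypo:1} and qualifies for Proposition \ref{prop:partialEauvalence}; moreover $\phi(\sigma)=\sigma$, so every hypothesis of the form $\rho_k\le\phi(\sigma)<\rho_{k+1}$ there simplifies to $\rho_k\le\sigma<\rho_{k+1}$. Since $\rho_0=\sigma^*$ and $\rho_L=\phi(\|b\|_p)=\|b\|_p$, Proposition \ref{prop:rhoSOmega}(iii) guarantees that the half-open intervals $[\rho_i,\rho_{i+1})$, $i\in\mathbb{N}_{L-1}^0$, are disjoint and exactly tile $[\sigma^*,\|b\|_p)$, while the remaining range $\sigma\ge\|b\|_p$ is handled separately. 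This tiling is what lets me identify the index $k$ of Proposition \ref{prop:partialEauvalence} with the index $i$ in the theorem.

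For Items (i) and (ii) I would simply read the conclusions off Proposition \ref{prop:partialEauvalence}. Fixing $\sigma\in[\rho_i,\rho_{i+1})$ and applying part (i) of that proposition with $k=i$ gives $\|y\|_0=s_i$ for every $y\in\widehat\Omega(\sigma)$, hence $H(\sigma)=s_i$, together with the inclusion $\Omega_i\subseteq\widehat\Omega(\sigma)$. When $\sigma=\rho_i$, part (ii) of the proposition upgrades this to the equality $\widehat\Omega(\sigma)=\Omega_i$, and when $p=2$ and $\sigma\in(\rho_i,\rho_{i+1})$, part (iii) upgrades the inclusion to a proper one. The boundary case $\sigma\ge\|b\|_p$ is immediate: $\mathbf{0}_n\in G_\sigma$ with $\|\mathbf{0}_n\|_0=0$, so $H(\sigma)=0$ and $\widehat\Omega(\sigma)=\{\mathbf{0}_n\}$, exactly as noted before Proposition \ref{prop:partialEauvalence}.

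Items (iii) and (iv) would then follow as bookkeeping consequences of Item (i). Piecewise constancy is immediate since $H\equiv s_i$ on $[\rho_i,\rho_{i+1})$ and $H\equiv 0=s_L$ on $[\|b\|_p,\infty)$; by Proposition \ref{prop:rhoSOmega}(ii) the attained values $s_0>s_1>\cdots>s_L=0$ are distinct and number $L+1\le\mathrm{rank}(A)+1$ by Proposition \ref{prop:rhoSOmega}(i). The strict decrease of the $s_i$ together with the increasing order of the intervals makes $H$ nonincreasing; constancy on the left-closed intervals makes it right-continuous; and a nonincreasing right-continuous step function has only downward jumps, which yields lower semicontinuity. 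For Item (iv), given $x\in\widehat\Omega(\sigma_1)$ with $\sigma_1\le\sigma_2$ both in $[\rho_i,\rho_{i+1})$, feasibility $\|Ax-b\|_p\le\sigma_1\le\sigma_2$ puts $x\in G_{\sigma_2}$, while $\|x\|_0=s_i=H(\sigma_2)$ by Item (i) shows $x$ attains the optimal value, so $x\in\widehat\Omega(\sigma_2)$.

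Because nearly all the analytic work lives in the already-proved Proposition \ref{prop:partialEauvalence}, I expect no serious obstacle; the delicate points are purely organizational. The step most deserving care is the lower-semicontinuity claim in Item (iii): I must verify that the jumps of $H$ sit exactly at the points $\rho_i$ and point downward, so that $H(\rho_i)$ equals the right-hand limit rather than the larger left-hand limit, which is precisely what right-continuity plus monotonicity provides. Equally, I must confirm at the outset that the intervals $[\rho_i,\rho_{i+1})$ and $[\|b\|_p,\infty)$ exhaust the whole domain $[\sigma^*,\infty)$ with neither gaps nor overlaps, since the entire index matching between the theorem and Proposition \ref{prop:partialEauvalence} rests on it.
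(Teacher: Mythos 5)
Your proposal is correct and follows essentially the same route as the paper, which likewise reads Items (i) and (ii) directly off Proposition \ref{prop:partialEauvalence} (with $\phi(\sigma)=\sigma$), obtains Item (iii) as an immediate consequence, and derives Item (iv) from $G_{\sigma_1}\subseteq G_{\sigma_2}$ together with Item (i). Your write-up merely spells out the interval-tiling and semicontinuity details that the paper leaves implicit.
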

\begin{proof}
Items (i) and (ii) follow from  Proposition \ref{prop:partialEauvalence}. Then Item (iii) of this theorem follows immediately. By Item (i) and $G_{\sigma_1}\subseteq G_{\sigma_2}$ for any $\sigma_1, \sigma_2\in[\rho_i, \rho_{i+1})$ satisfying $\sigma_1\le \sigma_2$, we obtain Item (iv).
\end{proof}

\subsection{Stability for problem \eqref{prob:penalty}}\label{sec:staP}
This subsection is for the stability of problem \eqref{prob:penalty}. We shall study the behaviors of
the marginal function $F$ and optimal solution set $\Omega$ with respect to $\lambda$.

 According to Theorem \ref{thm:exist}, $F(\lambda)=\min\{f_i(\lambda), i \in\mathbb{N}_{L}^0\}$ with $f_i$ defined by \eqref{def:fi}. Obviously,  each $f_i$ for $i \in\mathbb{N}_{L}^0$ is a line with slop $\rho_i$ and intercept $s_i$. Therefore, by Items (ii) and (iii) of Proposition \ref{prop:rhoSOmega}, it is easy to deduce that $F$ is continuous, piecewise linear and nondecreasing. Next we propose an iteration procedure to find the minimal value of $f_i$ for $i \in\mathbb{N}_{L}^0$, i.e., the marginal function $F(\lambda)$, for $\lambda\in(0,+\infty)$.

\begin{definition}\label{def:lambda_i}
Given $\phi$ satisfying H\ref{hypo:1} and $p\in\{1, 2\}$, let $L$ and $s_i, \rho_i, \Omega_i$ for $i\in\mathbb{N}_{L}^0$ be defined by Definition \ref{def:sRhoOmega}. Then the integer $K$, $\{t_i\in \mathbb{N}_{L}^0:i\in\mathbb{N}_{K}^0\}, \{\lambda_i>0: i \in\mathbb{N}_{K}^0\}, \{\Lambda_i\subseteq \mathbb{N}_{L}^0: i=-1,0,\dots, K-1\}$ are defined by the following iteration
\begin{eqnarray*}
\mathrm{set~~}& i=0, &\Lambda_{-1}:=\{0\}, \\
\mathrm{while~~}&L\not\in\Lambda_{i-1} &\\
&t_{i}:=&\max \Lambda_{i-1},\\
&\lambda_i:=&\max\{\frac{s_{t_i}-s_j}{\rho_j-\rho_{t_i}}: j=t_i+1,t_i+2,\dots, L\},\\
&\Lambda_i:=&\arg\max\{\frac{s_{t_i}-s_j}{\rho_j-\rho_{t_i}}: j=t_i+1,t_i+2,\dots, L\},\\
&i=&i+1,\\
\mathrm{end~~}&&\\
&K:=i,& t_K:=L, \lambda_K:=0.\\
\end{eqnarray*}
\end{definition}

We present an example to illustrate the iterative procedure in Definition \ref{def:lambda_i}.
\begin{example}\label{exam:3.6}
In this example, we set $L=4$. By Proposition \eqref{prop:rhoSOmega}, we set  $s_0=4, s_1=3, s_2=2, s_3=1, s_4=0$ and $\rho_0=0, \rho_1=0.25, \rho_2=0.5, \rho_3=0.5+1/1.5, \rho_4=1.5$. Then $f_i$ for $i\in\mathbb{N}_4^0$ can be obtained by equation \ref{def:fi}. 
We depict the plots of $f_i$ for $i\in\mathbb{N}_4^0$ in Figure \ref{fig:ex1}.
We observe from Figure \ref{fig:ex1} that, when $\lambda>\lambda_0$, $F(\lambda)=f_{t_0}(\lambda)=f_0(\lambda)$. Further,  the first  and largest critical parameter $\lambda_0$ can be obtained by the largest intersection point of $f_0$ and $f_i$ for all $i\in\mathbb{N}_4$. Here, we have that $\Lambda_0=\{1, 2\}$, which collects all the indexes that $f_i(\lambda_0)=f_0(\lambda_0)$ except $t_0=0$. In order to search for the second critical parameter value $\lambda_1$, we first find that on the second interval $(\lambda_1, \lambda_0)$, $F(\lambda)=f_{t_1}(\lambda)$, where $t_1=\max\Lambda_0=2$. Thus, $\lambda_1$ can be obtained  by the largest intersection point of $f_2$ and $f_j$ for $j=3, 4$. Then $\Lambda_1=\{L\}=\{4\}$. Thus, in this case, $K=2$, $t_2=4$. On interval $(0, \lambda_1)$, we have $F(\lambda)=f_{t_2}(\lambda)=f_4(\lambda)$.
In this example, $F(\lambda)\neq f_3(\lambda)$ for any $\lambda>0$.

\begin{figure}\label{fig:ex1}
\centering

\scalebox{0.8}{\includegraphics{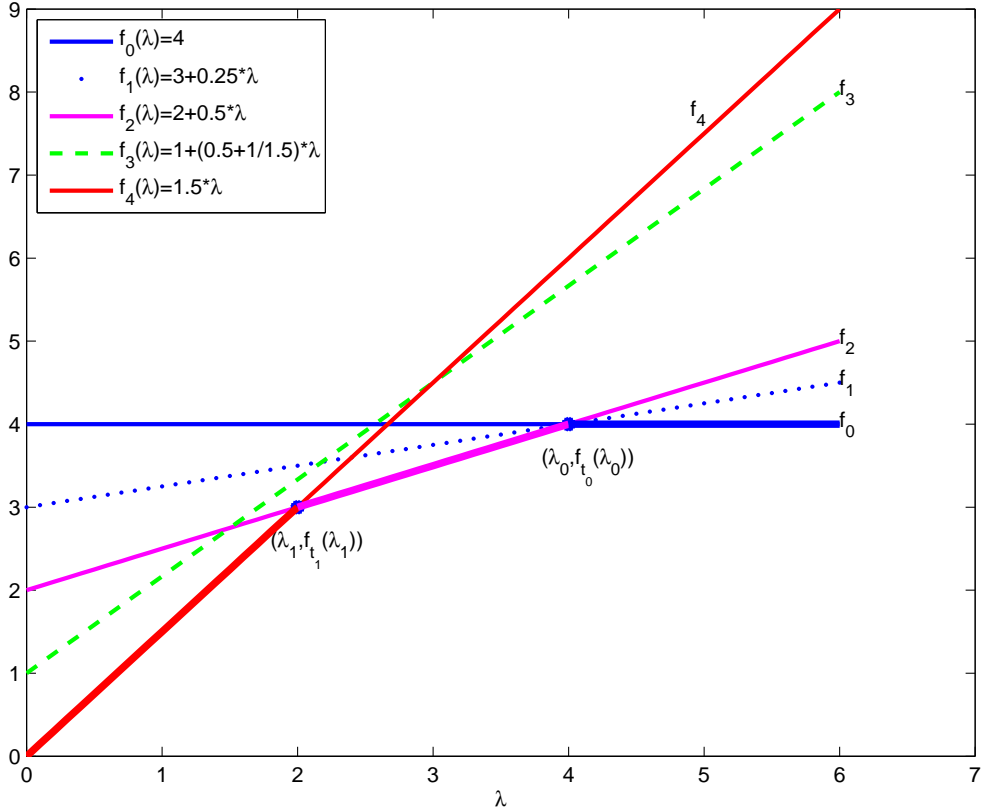}}

\caption{Plots of $f_i$ for $i\in\mathbb{N}_4^0$ in Example \ref{exam:3.6}. The line in bold represents $F(\lambda):=\min\{f_i(\lambda): i\in\mathbb{N}_4^0\}$. }
\end{figure}
\end{example}

The following proposition provides some basic properties of $K$, $\{\Lambda_i\subseteq \mathbb{N}_{L}^0: i=-1,0,\dots, K-1\}$ and $t_i, \lambda_i$ for $i\in\mathbb{N}_{K}^0$ by Definition \ref{def:lambda_i}.
\begin{proposition}\label{prop:lambda_i}
Given $\phi$ satisfying H\ref{hypo:1} and $p\in\{1, 2\}$, let $L$ and $s_i, \rho_i, \Omega_i$ for $i\in\mathbb{N}_{L}^0$ be defined by Definition \ref{def:sRhoOmega}. Let  $K$, $\{\Lambda_i\subseteq \mathbb{N}_{L}^0: i=-1,0,\dots, K-1\}$ and $t_i, \lambda_i$ for $i\in\mathbb{N}_{K}^0$ be  defined by  Definition \ref{def:lambda_i}. Then the following statements hold:
\begin{itemize}
\item [(i)] $0\le K\le L$, in particular, if $L\ge 1$ then $K\ge 1$.
\item [(ii)] $0=t_0<t_1<\dots<t_K=L$.
\item [(iii)] $\lambda_0>\lambda_1>\dots>\lambda_K=0$.
\item [(iv)] $\Lambda_i\neq \emptyset$ and $\Lambda_i\cap\Lambda_j=\emptyset$, for all $i\neq j$, $i, j=-1, 0,\dots,  K-1$.
\end{itemize}
\end{proposition}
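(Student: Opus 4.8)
The plan is to read the iteration in Definition \ref{def:lambda_i} geometrically: each $f_i(\lambda)=s_i+\lambda\rho_i$ is a line, and by Items (ii)--(iii) of Proposition \ref{prop:rhoSOmega} the intercepts $s_i$ strictly decrease while the slopes $\rho_i$ strictly increase in $i$. The iteration traces the lower envelope $F(\lambda)=\min_i f_i(\lambda)$ from $\lambda=+\infty$ (where $f_0$, having the smallest slope, wins) down to $\lambda=0$, and the $\lambda_i$ are its breakpoints. The single structural fact I extract first --- which drives (ii) and (iv) --- is that for each $i\in\{0,\dots,K-1\}$ the set $\Lambda_i$ is a nonempty subset of $\{t_i+1,\dots,L\}$ whose maximum is exactly $t_{i+1}$; hence in fact $\Lambda_i\subseteq\{t_i+1,\dots,t_{i+1}\}$.

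For (ii), I will argue $t_0=\max\Lambda_{-1}=\max\{0\}=0$, and for $1\le i\le K-1$ that $t_i=\max\Lambda_{i-1}\ge t_{i-1}+1>t_{i-1}$ since $\Lambda_{i-1}\subseteq\{t_{i-1}+1,\dots,L\}$. Thus the $t_i$ strictly increase, and the terminal assignment gives $t_K=L$ consistently, because the loop exits precisely when $L\in\Lambda_{K-1}$, forcing $\max\Lambda_{K-1}=L$. Item (i) then follows: the $K+1$ strictly increasing integers $0=t_0<\dots<t_K=L$ give $L=t_K\ge K$, so $K\le L$; and if $L\ge1$ the loop condition $L\notin\Lambda_{-1}=\{0\}$ holds at the first pass, so the body executes at least once and $K\ge1$ (with $K=0$ exactly when $L=0$).

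For (iv), nonemptiness of each $\Lambda_i$ is immediate, since it is an $\arg\max$ over the finite nonempty index set $\{t_i+1,\dots,L\}$ (nonempty because $t_i<L$), together with $\Lambda_{-1}=\{0\}$. For disjointness I use the refined containment above: for $0\le i\le K-1$ we have $\Lambda_i\subseteq\{t_i+1,\dots,t_{i+1}\}$, while $\Lambda_{-1}=\{0\}=\{t_0\}$; since the $t_i$ strictly increase, the blocks $\{0\},\{t_0+1,\dots,t_1\},\{t_1+1,\dots,t_2\},\dots$ are pairwise disjoint, and distinct $\Lambda_j$ lie in distinct blocks.

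The main obstacle is the strict monotonicity in (iii). First, $\lambda_K=0$ by definition, and each $\lambda_i$ for $i<K$ is positive: writing $\lambda_i=(s_{t_i}-s_{t_{i+1}})/(\rho_{t_{i+1}}-\rho_{t_i})$ (as $t_{i+1}\in\Lambda_i$ attains the maximum), both numerator and denominator are positive because $t_i<t_{i+1}$. The heart of the argument is $\lambda_i>\lambda_{i+1}$ for $0\le i\le K-2$. From the maximality defining $\lambda_i$ we have $\frac{s_{t_i}-s_j}{\rho_j-\rho_{t_i}}\le\lambda_i$ for all $j>t_i$, which rearranges to $f_{t_i}(\lambda_i)\le f_j(\lambda_i)$, with equality exactly when $j\in\Lambda_i$. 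In particular $f_{t_i}(\lambda_i)=f_{t_{i+1}}(\lambda_i)$, while $t_{i+2}=\max\Lambda_{i+1}>t_{i+1}=\max\Lambda_i$ forces $t_{i+2}\notin\Lambda_i$, making the inequality strict: $f_{t_{i+2}}(\lambda_i)>f_{t_{i+1}}(\lambda_i)$. I then consider the affine function $h(\lambda):=f_{t_{i+2}}(\lambda)-f_{t_{i+1}}(\lambda)$, whose slope $\rho_{t_{i+2}}-\rho_{t_{i+1}}$ is positive; it satisfies $h(\lambda_i)>0$ and $h(\lambda_{i+1})=0$ (since $t_{i+2}\in\Lambda_{i+1}$ realizes $\lambda_{i+1}$). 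A strictly increasing affine function that is positive at $\lambda_i$ and vanishes at $\lambda_{i+1}$ must have $\lambda_{i+1}<\lambda_i$, which is the desired inequality; chaining these with $\lambda_{K-1}>0=\lambda_K$ completes (iii).
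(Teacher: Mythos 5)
Your proof is correct and follows essentially the same route as the paper's: deducing (ii) and (iv) from $t_{i+1}=\max\Lambda_i$ and $\Lambda_i\subseteq\{t_i+1,\dots,L\}$, and obtaining (iii) from the strict inequality $f_{t_{i+1}}(\lambda_i)<f_j(\lambda_i)$ for $j>t_{i+1}$ (the paper compares $\lambda_i$ against all ratios defining $\lambda_{i+1}$, you against the one attaining the maximum via a monotone affine function --- a cosmetic difference). Your treatment is in fact slightly more careful than the paper's, which dismisses (i) and (iv) as immediate and leaves the positivity $\lambda_{K-1}>0=\lambda_K$ implicit.
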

The proof is outlined in Appendix 8.1.

The main results of this subsection are presented in the following theorem.
\begin{theorem}\label{thm:stability}
Given $\phi$ satisfying H\ref{hypo:1} and $p\in\{1, 2\}$, let $L$ and $s_i, \rho_i, \Omega_i$ for $i\in\mathbb{N}_{L}^0$ be defined by Definition \ref{def:sRhoOmega}. Let  $K$, $\{\Lambda_i\subseteq \mathbb{N}_{L}^0: i=-1,0,\dots, K-1\}$ and $t_i, \lambda_i$ for $i\in\mathbb{N}_{K}^0$ be  defined by  Definition \ref{def:lambda_i}. Let $f_i$ for $i \in\mathbb{N}_{L}^0$, $F$ and $\Omega$ be defined by \eqref{def:fi}, \eqref{def:F} and \eqref{def:Omega} respectively. By Remark \ref{remark:L=0}, if $L=0$, then $F(\lambda)=\rho_0\lambda$ and $\Omega(\lambda)=\{\mathbf{0}_n\}$. If $L\ge 1$, then the following statements hold:
\begin{itemize}
\item [(i)] $$F(\lambda)=\begin{cases}
f_L(\lambda), &\mathrm{~~if~~}\lambda\in(0, \lambda_{K-1}],\\
f_{t_i},& \mathrm{~~if~~}\lambda\in (\lambda_i, \lambda_{i-1}], i\in\mathbb{N}_{K-1},\\
f_0(\lambda),& \mathrm{~~if~~}\lambda\in(\lambda_0, +\infty).
\end{cases}$$
\item [(ii)] $$
\Omega(\lambda)=\begin{cases}
\Omega_L=\{\mathbf{0}_n\},& \mathrm{~~if~~}\lambda\in(0, \lambda_{K-1}),\\
\Omega_{t_i}, & \mathrm{~~if~~}\lambda\in (\lambda_i, \lambda_{i-1}), i\in\mathbb{N}_{K-1},\\
\Omega_0,& \mathrm{~~if~~}\lambda\in(\lambda_0, +\infty),\\
\bigcup_{k\in\Lambda_i}\Omega_k\cup\Omega_{t_i}, & \mathrm{~~if~~} \lambda=\lambda_i, i\in\mathbb{N}_{K-1}^0.
\end{cases}
$$
\item [(iii)] $F$ is continuous, piecewise linear and nondecreasing, particularly,
$F$ is strictly increasing in $(0, \lambda_0)$.
\end{itemize}
\end{theorem}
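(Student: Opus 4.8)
The plan is to treat each $f_i(\lambda)=s_i+\lambda\rho_i$ as a line in the variable $\lambda$ and to exploit that, by Theorem~\ref{thm:exist}, the marginal value and the optimal solution set are entirely determined by the lower envelope $F(\lambda)=\min\{f_i(\lambda):i\in\mathbb{N}_L^0\}$ together with its active index set $\Lambda^*(\lambda)=\arg\min\{f_i(\lambda):i\in\mathbb{N}_L^0\}$. Since Proposition~\ref{prop:rhoSOmega} gives $s_0>\dots>s_L=0$ and $\rho_0<\dots<\rho_L$, these lines have strictly increasing slopes and strictly decreasing intercepts. I would first record the pivotal elementary fact: for $i<j$ the difference $f_i(\lambda)-f_j(\lambda)=(s_i-s_j)+\lambda(\rho_i-\rho_j)$ is strictly decreasing in $\lambda$ and vanishes exactly at the positive abscissa $\lambda_{ij}:=\frac{s_i-s_j}{\rho_j-\rho_i}$, so $f_i<f_j$ for $\lambda>\lambda_{ij}$ and $f_i>f_j$ for $\lambda<\lambda_{ij}$. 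The numbers $\lambda_i$ generated by Definition~\ref{def:lambda_i} are precisely such intersection abscissae, namely $\lambda_i=\lambda_{t_i\,j}$ for every $j\in\Lambda_i$, and Proposition~\ref{prop:lambda_i} supplies the structural facts $0=t_0<\dots<t_K=L$ and $\lambda_0>\dots>\lambda_K=0$ that I will use freely.

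The heart of the argument is to prove, by induction on $i\in\mathbb{N}_K^0$, that on the interval $[\lambda_i,\lambda_{i-1}]$ (with $\lambda_{-1}:=+\infty$) the line $f_{t_i}$ lies weakly below every other line, with strict inequality on the open interior. For the base case $i=0$ one has $t_0=0$ and $\lambda_0=\max_j\lambda_{0j}$, so for $\lambda\ge\lambda_0\ge\lambda_{0j}$ the monotonicity fact gives $f_0\le f_j$ for all $j\ge1$. For the inductive step I would combine two one-sided endpoint controls. At the right endpoint $\lambda_{i-1}$, since $t_i=\max\Lambda_{i-1}\in\Lambda_{i-1}$ we get $f_{t_i}(\lambda_{i-1})=f_{t_{i-1}}(\lambda_{i-1})$, and the induction hypothesis yields $f_{t_{i-1}}(\lambda_{i-1})\le f_j(\lambda_{i-1})$ for all $j$. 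At the left endpoint $\lambda_i$, the defining maximum gives $\frac{s_{t_i}-s_j}{\rho_j-\rho_{t_i}}\le\lambda_i$, hence $f_{t_i}(\lambda_i)\le f_j(\lambda_i)$, for all $j>t_i$. I then propagate each inequality across the whole interval by the monotonicity of the pairwise differences: for $j>t_i$ the difference $f_{t_i}-f_j$ is decreasing and $\le 0$ at $\lambda_i$, hence $\le 0$ throughout and $<0$ for $\lambda>\lambda_i$; for $j<t_i$ it is increasing and $\le 0$ at $\lambda_{i-1}$, hence $\le 0$ throughout and $<0$ for $\lambda<\lambda_{i-1}$. This proves the claim, and inspecting the equality cases shows that $\Lambda^*$ is the singleton $\{t_i\}$ on the open subintervals, while at each breakpoint $\lambda_i$ the equality set is exactly $\{t_i\}\cup\Lambda_i$ (equality with $f_j$, $j>t_i$, occurs iff $j\in\Lambda_i$, and strict inequality holds for every $j<t_i$ because $\lambda_i<\lambda_{i-1}$). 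The terminal interval $(0,\lambda_{K-1}]$ is the case $i=K$ with $t_K=L$, handled identically since no index exceeds $L$.

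With the active index set identified on every interval, Items~(i) and~(ii) follow at once from Items~(i) and~(ii) of Theorem~\ref{thm:exist}, using $\Omega_L=\{\mathbf{0}_n\}$ from Proposition~\ref{prop:rhoSOmega}(iv) on the terminal interval. For Item~(iii), continuity and piecewise linearity of $F$ are immediate because $F$ is the pointwise minimum of finitely many affine functions; $F$ is nondecreasing because each $f_i$ has nonnegative slope $\rho_i$ and the pointwise minimum of nondecreasing functions is nondecreasing. For strict monotonicity on $(0,\lambda_0)$ I would observe that on every piece contained in this range the representing line is $f_{t_i}$ with $t_i\ge 1$ (or $f_L$), whose slope satisfies $\rho_{t_i}\ge\rho_1>\rho_0\ge0$ by Proposition~\ref{prop:rhoSOmega}(iii), hence is strictly positive; continuity of $F$ then glues these strictly increasing pieces into a function that is strictly increasing on all of $(0,\lambda_0)$.

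The step I expect to be the main obstacle is the inductive propagation in the second paragraph: one must match the two one-sided endpoint inequalities — one inherited from the induction hypothesis at $\lambda_{i-1}$, the other coming from the defining maximum in Definition~\ref{def:lambda_i} at $\lambda_i$ — with the correct direction of monotonicity (decreasing differences against larger indices, increasing against smaller indices) so that together they cover the entire interval, and simultaneously pin down the equality set precisely at the breakpoints so that the union formula for $\Omega(\lambda_i)$ comes out exactly as stated.
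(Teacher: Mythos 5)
Your proposal is correct and follows essentially the same route as the paper: both reduce the theorem, via Theorem \ref{thm:exist}, to identifying the active index set of the lower envelope of the lines $f_i$ on each interval $(\lambda_i,\lambda_{i-1})$ and at each breakpoint $\lambda_i$, using the strict monotonicity of $s_i$ and $\rho_i$ from Proposition \ref{prop:rhoSOmega} to propagate one-sided endpoint comparisons across intervals (this is exactly the content of Lemma \ref{lema:lambdai} and Proposition \ref{prop:fi} in the appendix, which your interval-by-interval induction with two-sided endpoint control repackages cleanly).
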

To improve readability, the proof of Theorem \ref{thm:stability} and two auxiliary lemma and proposition are given in Appendix 8.2.

A direct consequence of Theorem \ref{thm:stability} is stated below.
\begin{corollary}\label{crol:stability}
Under the assumptions of Theorem \ref{thm:stability},  the following statements hold:
\begin{itemize}
\item [(i)] If $\lambda', \lambda''\in (\lambda_0, +\infty)$ or $\lambda', \lambda''\in (\lambda_i, \lambda_{i-1})$, $i\in\mathbb{N}_K$, then $\|x\|_0=\|y\|_0$ and $\phi(\|Ax-b\|_p)=\phi(\|Ay-b\|_p)$ hold for any $x\in\Omega(\lambda')$ and any $y\in\Omega(\lambda'')$.
\item [(ii)] If $\lambda'<\lambda''$, then $\|x\|_0\le \|y\|_0$ and $\phi(\|Ax-b\|_p)\ge \phi(\|Ay-b\|_p)$ hold for any $x\in\Omega(\lambda')$ and any $y\in\Omega(\lambda'')$.
\end{itemize}
\end{corollary}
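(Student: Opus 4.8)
The plan is to deduce both statements directly from the explicit formula for $\Omega(\lambda)$ given in Item (ii) of Theorem \ref{thm:stability}, together with two facts recorded earlier: every $x\in\Omega_j$ satisfies $\|x\|_0=s_j$ and $\phi(\|Ax-b\|_p)=\rho_j$ (Item (v) of Proposition \ref{prop:rhoSOmega}), and the index sequences obey $s_0>s_1>\dots>s_L$ and $\rho_0<\rho_1<\dots<\rho_L$ (Items (ii) and (iii) of Proposition \ref{prop:rhoSOmega}). The upshot is that controlling $\|x\|_0$ and $\phi(\|Ax-b\|_p)$ for a point $x\in\Omega(\lambda)$ reduces entirely to locating the index $j$ with $x\in\Omega_j$, and comparing two such quantities is equivalent to comparing the corresponding indices in the opposite sense.

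For Item (i), I would observe that on each of the open intervals named there, namely $(\lambda_0,+\infty)$ and $(\lambda_i,\lambda_{i-1})$ for $i\in\mathbb{N}_K$, the map $\Omega(\cdot)$ is constant and equals a single set $\Omega_j$: it is $\Omega_0$ on $(\lambda_0,+\infty)$, it is $\Omega_{t_i}$ on $(\lambda_i,\lambda_{i-1})$ for $i\in\mathbb{N}_{K-1}$, and it is $\Omega_L$ on $(0,\lambda_{K-1})$ (recall $\lambda_K=0$ and $t_K=L$). Hence if $\lambda'$ and $\lambda''$ lie in the same such interval, then $\Omega(\lambda')=\Omega(\lambda'')=\Omega_j$ for one fixed $j$, so any $x\in\Omega(\lambda')$ and $y\in\Omega(\lambda'')$ both lie in $\Omega_j$; applying Item (v) of Proposition \ref{prop:rhoSOmega} gives $\|x\|_0=s_j=\|y\|_0$ and $\phi(\|Ax-b\|_p)=\rho_j=\phi(\|Ay-b\|_p)$ at once.

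For Item (ii), the reduction is to show that if $\lambda'<\lambda''$, and if $x\in\Omega_{j'}$ and $y\in\Omega_{j''}$ are the members of $\Omega(\lambda')$ and $\Omega(\lambda'')$ under consideration (each lies in some $\Omega_j$ because $\Omega(\lambda)$ is a union of such sets by Theorem \ref{thm:stability}), then $j'\ge j''$; the strict monotonicity of $\{s_j\}$ and $\{\rho_j\}$ then yields $\|x\|_0=s_{j'}\le s_{j''}=\|y\|_0$ and $\phi(\|Ax-b\|_p)=\rho_{j'}\ge\rho_{j''}=\phi(\|Ay-b\|_p)$. To establish $j'\ge j''$ I would introduce, for each $\lambda$, the active index set $I(\lambda):=\{j:\Omega_j\subseteq\Omega(\lambda)\}$ and track its extremes. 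By Item (ii) of Theorem \ref{thm:stability}, $I(\lambda)=\{t_i\}$ on the open intervals while $I(\lambda_i)=\{t_i\}\cup\Lambda_i$ at the breakpoints; using $t_{i+1}=\max\Lambda_i$, $\Lambda_i\subseteq\{t_i+1,\dots,t_{i+1}\}$ and $t_0<t_1<\dots<t_K$ from Definition \ref{def:lambda_i} and Proposition \ref{prop:lambda_i}, one checks that $\min I(\lambda)$ and $\max I(\lambda)$ are nondecreasing as $\lambda$ decreases and, more precisely, that $\min I(\lambda')\ge\max I(\lambda'')$ whenever $\lambda'<\lambda''$. Since $j'\ge\min I(\lambda')$ and $j''\le\max I(\lambda'')$, the inequality $j'\ge j''$ follows.

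The one place that needs care is the behaviour at the breakpoints $\lambda=\lambda_i$, where $\Omega(\lambda)$ is a union of several $\Omega_k$ rather than a single set. There one must verify that the largest index active at $\lambda_i$, which equals $\max\Lambda_i=t_{i+1}$, coincides with the single index $t_{i+1}$ active on the interval $(\lambda_{i+1},\lambda_i)$ immediately below, and likewise that the smallest index $t_i$ active at $\lambda_i$ matches the index active on $(\lambda_i,\lambda_{i-1})$ immediately above. These matchings are exactly what make $\min I(\cdot)$ and $\max I(\cdot)$ interlock without gaps across each $\lambda_i$, so that the comparison $\min I(\lambda')\ge\max I(\lambda'')$ survives at and across every breakpoint. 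This interlocking is the only nontrivial bookkeeping; everything else is a direct substitution via Proposition \ref{prop:rhoSOmega}.
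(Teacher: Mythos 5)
Your proposal is correct and follows essentially the same route as the paper: the paper's own proof is a two-line citation of exactly the ingredients you use — Item (ii) of Theorem \ref{thm:stability} for the piecewise description of $\Omega(\lambda)$, Item (v) of Proposition \ref{prop:rhoSOmega} to convert membership in $\Omega_j$ into the values $s_j$ and $\rho_j$, Items (ii)–(iii) of Proposition \ref{prop:rhoSOmega} for the opposite monotonicity of $\{s_j\}$ and $\{\rho_j\}$, and Item (ii) of Proposition \ref{prop:lambda_i} for $t_0<t_1<\cdots<t_K$. Your index-tracking argument via $\min I(\lambda')\ge\max I(\lambda'')$, including the breakpoint bookkeeping with $\max\Lambda_i=t_{i+1}$, is a valid and more explicit write-up of the same deduction.
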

\begin{proof}
Item (i) can be obtained by Item (ii) of Theorem \ref{thm:stability} as well as Item (v) of Proposition \ref{prop:rhoSOmega}. Similarly, Item (ii) follows from Item (ii) of  Theorem \ref{thm:stability}, Items (ii), (iii), (v) of Proposition \ref{prop:rhoSOmega} as well as Item (ii) of Proposition \ref{prop:lambda_i}.
\end{proof}

From Theorem \ref{thm:stability}, the optimal value of problem \eqref{prob:penalty} changes piecewise linearly while the optimal solution set of problem \eqref{prob:penalty} changes piecewise constantly as the parameter $\lambda$ varies. In addition, by Corollary \ref{crol:stability}, the optimal values of both the first and second terms of \eqref{prob:penalty}  are piecewise constant with respect to changes in the parameter $\lambda$.

\begin{remark}
We observe from Theorem \ref{thm:staConst} and \ref{thm:stability} that, when $\sigma\ge \|b\|_p$, problem \eqref{prob:constrain} share the same optimal solution set $\{\mathbf{0}_n\}$ as problem \eqref{prob:penalty} if $\lambda$ is chosen to be small enough.  Thus, in the remaining part of this paper, we only focus on the nontrivial case when $\sigma \in[\sigma^*, \|b\|_p)$, where $\sigma^*$ is defined by \eqref{def:sigmaStar}.
\end{remark}
\section{Relationship between the the constrained problem and a special penalty problem}\label{sec:relation}
In this section we explore the relationship between the constrained problem \eqref{prob:constrain} with  $p\in\{1,2\}$ and a corresponding special penalty problem. In the case of $p=2$ we shall study the relationship between problem \eqref{prob:l2-constrain} and the least square penalty problem \eqref{prob:l2-unconstrain}. In the case of $p=1$, we shall discuss the connections between problem \eqref{prob:l1-constrain} and the following $\ell_1$-penalty problem
\begin{equation}\label{prob:l1-unconstrain}
\min\{\|x\|_0+{\lambda}\|Ax-b\|_1: x\in\mathbb{R}^n\}.
\end{equation}
In fact, both problems \eqref{prob:l2-unconstrain} and \eqref{prob:l1-unconstrain} can be cast into the general penalty problem \eqref{prob:penalty} with $\phi$ defined at $z\ge0$, for $p\in\{1, 2\}$, as
\begin{equation}\label{eq:spePhi}
\phi(z):=\frac{1}{p}z^p.
\end{equation}


We exhibit the main results of this section in the following theorem.
\begin{theorem}\label{thm:leasEquv}
For $\phi$ defined by \eqref{eq:spePhi} and $p\in\{1,2\}$,  let $L$ and $s_i, \rho_i, \Omega_i$ for $i\in\mathbb{N}_{L}^0$ be defined by Definition \ref{def:sRhoOmega}. Let  $K$, $\{\Lambda_i\subseteq \mathbb{N}_{L}^0: i=-1,0,\dots, K-1\}$ and $t_i, \lambda_i$ for $i\in\mathbb{N}_{K}^0$ be  defined by  Definition \ref{def:lambda_i}.  Let  $\lambda_{-1}=+\infty$ and $T:=\{t_i: i\in\mathbb{N}_{K}^0\}$. Let $\sigma^*$, $\Omega$ and $\widehat \Omega$ be defined by \eqref{def:sigmaStar}, \eqref{def:Omega} and \eqref{def:omegaSharp} respectively. For $\sigma\in[\sigma^*, \|b\|_p)$, let $k\in\mathbb{N}_{L-1}^0$ be the integer such that $\rho_k\le \phi(\sigma)<\rho_{k+1}$.
\begin{itemize}
 \item [(i)] If $k\in T$ and suppose $k=t_j$,  then
     $$
     \begin{cases}
     \Omega(\lambda)\subseteq\widehat\Omega(\sigma), &\mathrm{~~if~~}\lambda\in(\lambda_j, \lambda_{j-1}),\\
     \Omega(\lambda)\cap\widehat\Omega(\sigma)=\Omega_{k}, &\mathrm{~~if ~~}\lambda=\lambda_{j} \mathrm{~~or~~} \lambda=\lambda_{j-1},\\
     \Omega(\lambda)\cap\widehat\Omega(\sigma)=\emptyset,&\mathrm{~~else.}
     \end{cases}
     $$
     Further, if $\phi(\sigma)=\rho_k$, we have $\Omega(\lambda)=\widehat\Omega(\sigma)$ for $\lambda\in(\lambda_j, \lambda_{j-1})$ and $\widehat\Omega(\sigma)\subset \Omega(\lambda)$ for $\lambda=\lambda_j, \lambda_{j-1}$.

\item [(ii)]  If  $k\in\bigcup_{i=-1}^{K-1}\Lambda_i\backslash T$, and suppose $k\in\Lambda_j$, then
    $$
    \begin{cases}
    \Omega(\lambda)\cap \widehat\Omega(\sigma)=\Omega_k, &\mathrm{~~if~~}\lambda=\lambda_j\\
    \Omega(\lambda)\cap \widehat\Omega(\sigma)=\emptyset, &\mathrm{~~else}.
    \end{cases}
    $$

\item [(iii)] If $k\not\in \bigcup_{i=-1}^{K-1}\Lambda_i$, then $\Omega(\lambda)\cap\widehat \Omega(\sigma)=\emptyset$ for any $\lambda>0$.
\end{itemize}

\end{theorem}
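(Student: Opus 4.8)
The plan is to leverage the explicit descriptions of $\widehat\Omega(\sigma)$ from Theorem \ref{thm:staConst}(ii) and of $\Omega(\lambda)$ from Theorem \ref{thm:stability}(ii), and then intersect them case by case according to where the index $k$ sits relative to the index set $T=\{t_i\}$ and the sets $\Lambda_i$. The key structural fact, which I would establish first, is that since $\phi(z)=\frac{1}{p}z^p$ is strictly increasing, the $L$, $s_i$, $\rho_i$, $\Omega_i$ produced by Definition \ref{def:sRhoOmega} coincide with those for $\phi(z)=z$ (Remark \ref{remark:same}); hence Theorem \ref{thm:staConst} applies verbatim with $\rho_i$ reinterpreted through $\phi(\sigma)$. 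Concretely, for $\sigma\in[\sigma^*,\|b\|_p)$ and the $k$ with $\rho_k\le\phi(\sigma)<\rho_{k+1}$, Proposition \ref{prop:partialEauvalence} gives $\Omega_k\subseteq\widehat\Omega(\sigma)$ with equality exactly when $\phi(\sigma)=\rho_k$, and a strict inclusion $\Omega_k\subset\widehat\Omega(\sigma)$ when $p=2$ and $\rho_k<\phi(\sigma)<\rho_{k+1}$; in all cases every $y\in\widehat\Omega(\sigma)$ has $\|y\|_0=s_k$.

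For Item (i), I would assume $k=t_j\in T$. Theorem \ref{thm:stability}(ii) then tells us that on the open interval $(\lambda_j,\lambda_{j-1})$ we have $\Omega(\lambda)=\Omega_{t_j}=\Omega_k\subseteq\widehat\Omega(\sigma)$, giving the first line. At the two endpoints $\lambda=\lambda_j$ and $\lambda=\lambda_{j-1}$, the optimal solution set of the penalty problem is a union $\bigcup_{m\in\Lambda_\bullet}\Omega_m\cup\Omega_{t_\bullet}$; intersecting with $\widehat\Omega(\sigma)$, I would argue that among all these $\Omega_m$ only $\Omega_k$ consists of vectors of cardinality $s_k$, while Proposition \ref{prop:rhoSOmega}(ii) forces the cardinalities $s_m$ to be strictly monotone and hence distinct from $s_k$ for $m\neq k$. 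Since every element of $\widehat\Omega(\sigma)$ has $\ell_0$-norm exactly $s_k$, the intersection collapses to $\Omega_k$, yielding the second line. For all other $\lambda$ the penalty solutions have cardinality $\neq s_k$, so the intersection is empty. The refinement when $\phi(\sigma)=\rho_k$ follows from $\widehat\Omega(\sigma)=\Omega_k$ (Proposition \ref{prop:partialEauvalence}(ii)), which immediately upgrades the inclusions to the stated equality on $(\lambda_j,\lambda_{j-1})$ and to $\widehat\Omega(\sigma)=\Omega_k\subset\Omega(\lambda)$ at the endpoints.

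Items (ii) and (iii) I would handle by the same cardinality bookkeeping. If $k\in\Lambda_j\setminus T$, then $\Omega_k$ appears in the description of $\Omega(\lambda)$ only at the single value $\lambda=\lambda_j$ (it never equals the index $t_i$ governing an open interval), so for $\lambda=\lambda_j$ the intersection with $\widehat\Omega(\sigma)$ again isolates $\Omega_k$ by the cardinality argument, and for every other $\lambda$ the index $k$ simply does not occur in $\Omega(\lambda)$, forcing emptiness. If $k\notin\bigcup_{i=-1}^{K-1}\Lambda_i$, then by construction $\Omega_k$ never appears in any piece of $\Omega(\lambda)$ for any $\lambda>0$, and since each $y\in\widehat\Omega(\sigma)$ has $\|y\|_0=s_k$ while each $x\in\Omega(\lambda)$ has $\|x\|_0=s_m$ for some $m\neq k$ (whence $\neq s_k$), the intersection is empty for all $\lambda$. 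The main obstacle, and the step deserving the most care, is the endpoint analysis in Item (i): one must verify that intersecting the \emph{union} $\bigcup_{m\in\Lambda_\bullet}\Omega_m\cup\Omega_{t_\bullet}$ with $\widehat\Omega(\sigma)$ really does cut down to precisely $\Omega_k$ and nothing more, which relies on combining the strict monotonicity of the $s_m$ (Proposition \ref{prop:rhoSOmega}(ii)) with the fact that $\widehat\Omega(\sigma)$ pins the cardinality to $s_k$, and on checking that $k$ indeed belongs to the relevant endpoint index set so that $\Omega_k$ is genuinely present.
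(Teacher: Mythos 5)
Your proposal is correct and follows essentially the same route as the paper's proof: read off $\widehat\Omega(\sigma)$ from Proposition \ref{prop:partialEauvalence} and $\Omega(\lambda)$ from Theorem \ref{thm:stability}(ii), then intersect case by case using the fact that every element of $\widehat\Omega(\sigma)$ has $\ell_0$-norm exactly $s_k$ while the $s_m$ are pairwise distinct, so the intersection isolates $\Omega_k$ precisely when the index $k$ appears in the description of $\Omega(\lambda)$ (i.e.\ $k=t_j\in\Lambda_{j-1}$ at the endpoints). The only cosmetic overstatement is the phrase ``with equality exactly when $\phi(\sigma)=\rho_k$'', since the converse direction is only established for $p=2$, but you never use that direction, so nothing is affected.
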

\begin{proof}
We first prove Item (i).
From Item (ii) of Theorem \ref{thm:stability},  we have $\Omega_{k}=\Omega(\lambda)$ for  $\lambda\in(\lambda_j, \lambda_{j-1})$. By Item (i) of Proposition \ref{prop:partialEauvalence}, we get that $\Omega(\lambda)\subseteq \widehat\Omega(\sigma)$ as $\lambda\in(\lambda_j, \lambda_{j-1})$. We also obtain that if $\phi(\sigma)=\rho_k$, $\Omega(\lambda)= \widehat\Omega(\sigma)=\Omega_k$ for $\lambda\in(\lambda_j, \lambda_{j-1})$ from  Item (ii) of Proposition \ref{prop:partialEauvalence}. When $\lambda=\lambda_j$ or $\lambda=\lambda_{j-1}$, by item (ii) of Theorem \ref{thm:stability} and $k=t_j\in\Lambda_{j-1}$, we have $\Omega_k\subset \Omega(\lambda)$ and $\|x\|_0\neq s_k$ for any $x\in \Omega(\lambda)\backslash \Omega_k$. Since $\Omega_k\subseteq\widehat\Omega(\sigma)$ and $\|x\|_0=s_k$ for all $x\in\widehat\Omega(\sigma)$, we get $\Omega(\lambda)\cap \widehat\Omega(\sigma)=\Omega_k$ for $\lambda=\lambda_j$ or $\lambda=\lambda_{j-1}$. Clearly, if $\phi(\sigma)=\rho_k$, then $\Omega_k=\widehat\Omega(\sigma)\subset\Omega(\lambda)$ for $\lambda=\lambda_j$ or $\lambda=\lambda_{j-1}$. When $\lambda\not\in[\lambda_j, \lambda_{j-1}]$, $\|x\|_0\neq s_k$ for $x\in\Omega(\lambda)$ and thus $\Omega(\lambda)\cap\widehat\Omega(\sigma)=\emptyset$.

We then prove Item (ii). By Item (i) of Proposition \ref{prop:partialEauvalence},  $\|x\|_0=s_k$ for any $x\in\widehat \Omega(\sigma)$. From Item (ii) of Theorem \ref{thm:stability}, we have $\Omega_k\subset \Omega(\lambda_j)$ due to $k\in\Lambda_j$. Then we get Item (ii) by the fact that $\|x\|_0\neq s_k$ for any $x\in\Omega(\lambda_j)\backslash \Omega_k$ and for any $x\in \Omega(\lambda)$ as $\lambda\neq \lambda_j$.

We finally prove Item (iii). Similarly,  we have $\|x\|_0=s_k$ for any $x\in\widehat \Omega(\sigma)$. However, by Theorem \ref{thm:stability} $\|x\|_0\neq s_k$ for any $x\in\Omega(\lambda)$ and $\lambda>0$ since $k\not\in\bigcup_{i=-1}^{K-1}\Lambda_i$. Thus we get Item (iii).
\end{proof}
\begin{remark}\label{remark:p=2}
In the case of $p=2$, by Proposition \ref{prop:partialEauvalence}, if $\rho_k<\phi(\sigma)<\rho_{k+1}$ and $k\in T$, then we have $\Omega(\lambda)\subset\widehat\Omega(\sigma)$ when $\lambda\in(\lambda_j, \lambda_{j-1})$.
\end{remark}

\begin{remark}
If $K=L$, then we have $k\in T$. In such case, Item (i) of Theorem \ref{thm:leasEquv} holds clearly.
\end{remark}

According to Item (iii) of Theorem \ref{thm:leasEquv}, in general it is possible that problem \eqref{prob:l2-unconstrain} has {\bf no} common optimal solutions with problem \eqref{prob:l2-constrain} for any $\lambda>0$. This means that in general problem \eqref{prob:l2-unconstrain} may not be an exact penalty method for problem \eqref{prob:l2-constrain}. However, in the noiseless case, we show in the next corollary that problem \eqref{prob:l2-unconstrain} has the same optimal solution set as problem \eqref{prob:lineq} if $\lambda$ is large enough.
\begin{corollary}\label{coro:least}
Let $\phi$ be defined by  \eqref{eq:spePhi} and $p\in\{1,2\}$. Assume the feasible region of problem \eqref{prob:lineq} is
nonempty. Then, there exists $\lambda^*>0$ such that problems \eqref{prob:lineq}
and \eqref{prob:penalty}  with $\lambda>\lambda^*$ have the same optimal solution set.
\end{corollary}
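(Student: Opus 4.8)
The plan is to reduce the statement to the stability description of $\Omega(\lambda)$ for large $\lambda$ already established in Theorem \ref{thm:stability}, after recognizing that in the noiseless regime the set $\Omega_0$ produced by Definition \ref{def:sRhoOmega} is precisely the optimal solution set of \eqref{prob:lineq}.

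First I would extract the consequence of the feasibility hypothesis. Since the feasible region $\{x:Ax=b\}$ of \eqref{prob:lineq} is nonempty, $b$ lies in the range of $A$, so $\sigma^{*}=\min\{\|Ax-b\|_{p}:x\in\mathbb{R}^{n}\}=0$ by \eqref{def:sigmaStar}. Because $\phi$ in \eqref{eq:spePhi} satisfies $\phi(0)=0$, this gives $\rho_{0}=\min\{\phi(\|Ax-b\|_{p}):x\in\mathbb{R}^{n}\}=0$. Next I would identify $\Omega_{0}$: the function $\phi(z)=\frac{1}{p}z^{p}$ is strictly increasing on $\mathbb{R}_{+}$ with $\phi(0)=0$, so $\phi(\|Ax-b\|_{p})=\rho_{0}=0$ holds if and only if $Ax=b$. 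Hence the constraint defining $\Omega_{0}$ in Definition \ref{def:sRhoOmega} coincides with the linear system $Ax=b$, and therefore $\Omega_{0}=\arg\min\{\|x\|_{0}:Ax=b\}=\widehat{\Omega}(0)$, which is exactly the optimal solution set of \eqref{prob:lineq}.

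With this identification in hand, I would invoke Theorem \ref{thm:stability}. In the main case $L\ge 1$, Item (ii) of that theorem yields $\Omega(\lambda)=\Omega_{0}$ for every $\lambda\in(\lambda_{0},+\infty)$; combining this with $\Omega_{0}=\widehat{\Omega}(0)$ shows that \eqref{prob:lineq} and \eqref{prob:penalty} share the same optimal solution set for all $\lambda>\lambda_{0}$, so $\lambda^{*}=\lambda_{0}$ works. The only remaining case is the degenerate one $L=0$, which by the argument above forces $\phi(\|b\|_{p})=\rho_{0}=0$ and hence $b=\mathbf{0}_{m}$; in that case both optimal solution sets equal $\{\mathbf{0}_{n}\}$ (the former trivially, the latter by Remark \ref{remark:L=0} and Theorem \ref{thm:exist}), so any positive $\lambda^{*}$ suffices.

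I do not expect a genuine obstacle here: the heavy lifting has already been carried out in Theorem \ref{thm:stability}, and the corollary reduces to the single structural observation that $\rho_{0}=0$ in the noiseless case turns $\Omega_{0}$ into the solution set of the linear-equation problem. The one point that needs care is separating out the degenerate case $L=0$ (equivalently $b=\mathbf{0}_{m}$), since the interval $(\lambda_{0},+\infty)$ and the nontrivial branch of Theorem \ref{thm:stability} are only available when $L\ge 1$; one could alternatively deduce the $L\ge 1$ case from Item (i) of Theorem \ref{thm:leasEquv} with $\sigma=0$, $k=t_{0}=0$ and $\phi(\sigma)=\rho_{0}$, but that route still requires $b\neq\mathbf{0}_{m}$ for the interval $[\sigma^{*},\|b\|_{p})$ to be nonempty, so the same split is unavoidable.
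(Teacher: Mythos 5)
Your proof is correct and follows essentially the same route as the paper: both reduce the claim to the observation that feasibility of \eqref{prob:lineq} forces $\rho_0=0$, identify the optimal solution set of \eqref{prob:lineq} with $\Omega_0$, and then take $\lambda^*=\lambda_0$ using the piecewise-constant description of $\Omega(\lambda)$ on $(\lambda_0,+\infty)$ (the paper packages this step as Item (i) of Theorem \ref{thm:leasEquv}, whereas you invoke Item (ii) of Theorem \ref{thm:stability} directly). Your explicit treatment of the degenerate case $L=0$ (i.e., $b=\mathbf{0}_m$), which the paper's proof silently skips, is a welcome extra.
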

\begin{proof}
Since the feasible region of problem \eqref{prob:lineq} is nonempty, $\rho_0=0$ by Definition \ref{def:sRhoOmega}.
Clearly, problem \eqref{prob:lineq} is exactly  problem \eqref{prob:constrain} with $\sigma=0$. Then, $\phi(\sigma)=\rho_0=0$ in this case. Let $K$ and $t_i$, $\lambda_i$ for $i\in\mathbb{N}_K^0$ be defined by definition \ref{def:lambda_i}. From Item (ii) of Proposition \ref{prop:lambda_i}, $t_0=0$ and thus $0\in T:=\{t_i, i\in\mathbb{N}_K^0\}$. According to Item (i) of Theorem \ref{thm:leasEquv},  problems \eqref{prob:lineq}
and \eqref{prob:penalty}   have the same optimal solution set when $\lambda\in(\lambda_0, +\infty)$. Set $\lambda^*=\lambda_0$, we then obtain the desired results.
\end{proof}

\section {Exact penalty problems for constrained $\ell_0$ regularization}\label{sec:equal}
In this section, we first establish exact penalty conditions under which problem \eqref{prob:penalty} have the same optimal solution set as problem \eqref{prob:constrain}, provided that $\lambda>\lambda^*$ for some $\lambda^*>0$. Based on the conditions developed, we propose several exact penalty formulations for problem \eqref{prob:constrain}.

 According to the definition of $\rho_0$ and $\Omega_0$ in Definition \ref{def:sRhoOmega}, the penalty term $\phi(\|A\cdot-b\|_p)$ attains its minimal value at any $x\in\Omega_0$. We also know that $\phi(\|A\cdot-b\|_2)$ attains its minimal value at any $x\in\Omega(\lambda)$ when $\lambda>\lambda_0$ by Item (ii) of Theorem \ref{thm:stability}, where $\lambda_0$ is defined by Definition \ref{def:lambda_i}. Since $\phi$ is nondecreasing, we have $\phi(\|Ax-b\|_p)\le \phi(\sigma)$ for any $x\in G_\sigma$. Then we obtain the following theorem.

\begin{theorem}\label{thm:equivalence}
  Let $\phi$ satisfy H\ref{hypo:1} and $p\in\{1, 2\}$. Suppose the feasible region of problem \eqref{prob:constrain} is nonempty, that is $G_\sigma\neq \emptyset$, where $G_\sigma$ is defined by \eqref{def:G}.  If
\begin{equation}\label{eq:eqiv}
\arg\min\{\phi(\|Ax-b\|_p): x\in\mathbb{R}^n\}=G_\sigma,
\end{equation}
then there exists $\lambda^*>0$ such that problems \eqref{prob:constrain} and \eqref{prob:penalty} with $\lambda>\lambda^*$ have the same optimal solution set.

\end{theorem}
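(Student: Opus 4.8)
The plan is to reduce the entire statement to a single set identity, $\Omega_0 = \widehat\Omega(\sigma)$, after which the conclusion drops out immediately from the piecewise-constant description of $\Omega(\lambda)$ already proved in Theorem \ref{thm:stability}. The guiding observation is that hypothesis \eqref{eq:eqiv} forces the minimizer set of the penalty term $\phi(\|A\cdot-b\|_p)$ to coincide with the feasible region $G_\sigma$; since $\Omega_0$ is by construction the sparsest part of that minimizer set while $\widehat\Omega(\sigma)$ is the sparsest part of $G_\sigma$, the two must agree.

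First I would record the elementary fact that, because $\rho_0$ is the minimal value of $\phi(\|Ax-b\|_p)$ over $\mathbb{R}^n$ by Definition \ref{def:sRhoOmega}, we have $\arg\min\{\phi(\|Ax-b\|_p): x\in\mathbb{R}^n\} = \{x: \phi(\|Ax-b\|_p) = \rho_0\}$. Combining this with hypothesis \eqref{eq:eqiv} gives $\{x: \phi(\|Ax-b\|_p) = \rho_0\} = G_\sigma$. Substituting this equality into the defining formula $\Omega_0 = \arg\min\{\|x\|_0: \phi(\|Ax-b\|_p) = \rho_0\}$ from Definition \ref{def:sRhoOmega} yields $\Omega_0 = \arg\min\{\|x\|_0: x\in G_\sigma\}$, which is exactly $\widehat\Omega(\sigma)$ by \eqref{def:omegaSharp}. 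This establishes the key identity $\Omega_0 = \widehat\Omega(\sigma)$.

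It then remains only to exhibit $\lambda^*$. If $L\ge 1$, I would set $\lambda^* := \lambda_0$, which is positive by Item (iii) of Proposition \ref{prop:lambda_i} together with $K\ge 1$ from Item (i); Item (ii) of Theorem \ref{thm:stability} then gives $\Omega(\lambda) = \Omega_0$ for every $\lambda\in(\lambda_0, +\infty)$, whence $\Omega(\lambda) = \widehat\Omega(\sigma)$ on that range. In the degenerate case $L=0$, where $\lambda_0 = 0$ and Theorem \ref{thm:stability} does not directly apply, I would instead invoke Remark \ref{remark:L=0}, which gives $\Omega(\lambda) = \{\mathbf{0}_n\} = \Omega_0 = \widehat\Omega(\sigma)$ for all $\lambda>0$, so any positive $\lambda^*$ suffices.

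I do not anticipate a serious obstacle here: the argument is essentially a bookkeeping identification of two $\arg\min$ sets combined with the already-established stability of $\Omega(\lambda)$. The two points requiring genuine care are the precise rewriting of the penalty term's $\arg\min$ as a level set $\{x: \phi(\|Ax-b\|_p) = \rho_0\}$ (valid because $\rho_0$ is the attained minimum), and the separate treatment of the degenerate case $L=0$, in which $\lambda_0 = 0$ and the appeal to Theorem \ref{thm:stability} must be replaced by Remark \ref{remark:L=0}.
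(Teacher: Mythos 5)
Your proposal is correct and follows essentially the same route as the paper's own proof: identify $\Omega_0=\widehat\Omega(\sigma)$ from hypothesis \eqref{eq:eqiv} and Definition \ref{def:sRhoOmega}, then take $\lambda^*=\lambda_0$ and invoke Item (ii) of Theorem \ref{thm:stability}. Your explicit rewriting of the $\arg\min$ as the level set $\{x:\phi(\|Ax-b\|_p)=\rho_0\}$ and your separate treatment of the degenerate case $L=0$ (where $\lambda_0=0$, so one must pick any positive $\lambda^*$ via Remark \ref{remark:L=0}) are small refinements the paper leaves implicit.
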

\begin{proof}
Since \eqref{eq:eqiv} holds, by the definition of $\rho_0$ and $\Omega_0$ in Definition \ref{def:sRhoOmega}, $\Omega_0=\arg\min\{\|x\|_0: x\in G_\sigma\}$. Thus $\Omega_0$ is the optimal solution set of problem \eqref{prob:constrain}. Let $\lambda^*=\lambda_0$, where $\lambda_0$ is defined in Definition \ref{def:lambda_i}. Then, from Item (ii) of Theorem \ref{thm:stability}, $\Omega_0$ is also the optimal solution set of problem \eqref{prob:penalty} with $\lambda>\lambda^*$. Then we have  that problems \eqref{prob:constrain} and \eqref{prob:penalty} with $\lambda>\lambda^*$ have the same optimal solution set.
\end{proof}

In general, it is not easy to verify condition \eqref{eq:eqiv}. The next corollary states a simple and useful condition which also ensures the exact penalty property once $\lambda$ is large enough.
\begin{corollary}\label{crol:equv}
Let $\phi$ satisfy H\ref{hypo:1} and $p\in\{1, 2\}$. Assume that the feasible region of problem \eqref{prob:constrain} is nonempty.
If $\phi$ satisfies
\begin{equation}\label{eq:phiSpeci}
\phi(z)=0, ~z\le \sigma \mathrm{~~and~~} \phi(z)>0, ~ z>\sigma,
\end{equation}
then  there exists $\lambda^*>0$ such that problems \eqref{prob:constrain} and \eqref{prob:penalty} with $\lambda>\lambda^*$ have the same optimal solution set.
\end{corollary}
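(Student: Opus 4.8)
The plan is to reduce Corollary \ref{crol:equv} to Theorem \ref{thm:equivalence} by verifying that condition \eqref{eq:phiSpeci} implies the key hypothesis \eqref{eq:eqiv}. That is, I want to show
$$
\arg\min\{\phi(\|Ax-b\|_p): x\in\mathbb{R}^n\}=G_\sigma.
$$
Once this set equality is established, the conclusion follows immediately by invoking Theorem \ref{thm:equivalence}, which already guarantees the existence of $\lambda^*>0$ (indeed $\lambda^*=\lambda_0$) with the desired equivalence of optimal solution sets.

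First I would check that the hypotheses of Corollary \ref{crol:equv} are consistent with those of Theorem \ref{thm:equivalence}: the function $\phi$ defined by \eqref{eq:phiSpeci} satisfies Hypothesis H\ref{hypo:1}, since it is nonnegative, nondecreasing, and satisfies $\phi(0)=0$ (as $0\le\sigma$); continuity at the breakpoint $z=\sigma$ follows because $\phi(\sigma)=0$ and $\phi(z)\to 0$ as $z\downarrow\sigma$ can be arranged, or more carefully, \eqref{eq:phiSpeci} already forces $\phi$ to vanish on $[0,\sigma]$ and be positive beyond, with continuity being part of the blanket assumption we impose on admissible $\phi$. The feasibility assumption $G_\sigma\neq\emptyset$ is shared by both statements.

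The core step is the set equality. I would argue both inclusions directly from \eqref{eq:phiSpeci}. Since $\phi$ is nonnegative with $\phi(z)=0$ precisely when $z\le\sigma$, the minimal value of $\phi(\|Ax-b\|_p)$ over $\mathbb{R}^n$ is $0$, and this minimum is attained exactly at those $x$ with $\|Ax-b\|_p\le\sigma$ — provided such $x$ exist, which is guaranteed by $G_\sigma\neq\emptyset$. Concretely, if $x\in G_\sigma$ then $\|Ax-b\|_p\le\sigma$, so $\phi(\|Ax-b\|_p)=0$, giving the minimal value and hence $x\in\arg\min$; conversely, if $x$ is a minimizer then $\phi(\|Ax-b\|_p)=0$ (the minimal value, since $G_\sigma\neq\emptyset$ forces the minimum to be $0$), so by the second clause of \eqref{eq:phiSpeci} we must have $\|Ax-b\|_p\le\sigma$, i.e. $x\in G_\sigma$. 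This establishes \eqref{eq:eqiv}.

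I do not anticipate a serious obstacle here, as the argument is essentially a tautological unpacking of \eqref{eq:phiSpeci}. The only point requiring care is the logical dependence on $G_\sigma\neq\emptyset$: without feasibility, the minimum of $\phi(\|Ax-b\|_p)$ could be strictly positive, and then the $\arg\min$ would not coincide with the (empty) set $G_\sigma$ in the intended way. The nonemptiness assumption cleanly removes this concern by pinning the minimal value at exactly $0$. With \eqref{eq:eqiv} in hand, Theorem \ref{thm:equivalence} delivers the result verbatim.
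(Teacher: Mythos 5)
Your proposal is correct and follows exactly the paper's route: the paper's own proof consists of the one-line observation that condition \eqref{eq:phiSpeci} implies condition \eqref{eq:eqiv}, after which Theorem \ref{thm:equivalence} applies. You simply spell out the two inclusions behind that implication (using $G_\sigma\neq\emptyset$ to pin the minimal value of $\phi(\|Ax-b\|_p)$ at $0$), which is a faithful and correct elaboration of the same argument.
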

\begin{proof}
The desired results follows from the fact that condition \eqref{eq:phiSpeci} implies condition \eqref{eq:eqiv}.
\end{proof}

 Next we present several examples of $\phi$ satisfying condition \eqref{eq:phiSpeci}. For $z\in\mathbb{R}$, we denote by $z_+$ the positive part of $z$, that is,
 $$
z_+:=\begin{cases}
z,& \mathrm{~~if~~} z>0,\\
0,&\mathrm{~~else}.
\end{cases}$$
\begin{example}
Given $p\in\{1,2\}$. Let scalar function $\phi_1:\mathbb{R}_+\rightarrow\mathbb{R}$ be defined at $z\ge0$ as
$\phi_1(z):=(z^p-\sigma^p)_+$.
Then $\phi_1$ satisfies H1 and condition \eqref{eq:phiSpeci}. With the choice of $\phi=\phi_1$, the penalty term in problem \eqref{prob:penalty} becomes
\begin{equation}\label{eq:phi1squar}
\phi_1(\|Ax-b\|_p)=(\|Ax-b\|_p^p-\sigma^p)_+.
\end{equation}
\end{example}

The penalty term \eqref{eq:phi1squar} with $p=2$ is well exploited in penalty method for quadratically constrained $\ell_q$ minimization with $0<q<1$ in \cite{Chen-Lu:penalty2014}. One can easily check that the penalty term \eqref{eq:phi1squar} is convex but non-differentiable for $p\in\{1,2\}$.

\begin{example}
Given $p\in\{1,2\}$. Let $\phi_2:\mathbb{R}_+\rightarrow\mathbb{R}$ be defined at $z\ge0$ as
\begin{equation}\label{def:phi2}
\phi_2(z):=\frac{1}{2}(z-\sigma)_+^2.
\end{equation}
Then $\phi_2$ satisfies H1 and condition \eqref{eq:phiSpeci}. With the choice of $\phi=\phi_2$, the penalty term in problem \eqref{prob:penalty} becomes
\begin{equation}\label{eq:phi2squar}
\phi_2(\|Ax-b\|_p)=\frac{1}{2}(\|Ax-b\|_p-\sigma)_+^2.
\end{equation}
\end{example}

By choosing $\phi=\phi_2$, for $p=2$ problem \eqref{prob:penalty} reads as
\begin{equation}\label{prob:l2-env}
\min\{\|x\|_0+\frac{\lambda}{2}(\|Ax-b\|_2-\sigma)_+^2: x\in\mathbb{R}^n\}.
\end{equation}
According to Corollary \ref{crol:equv},  the following proposition concerning the exact penalization of problem \eqref{prob:l2-env} holds.

\begin{proposition}\label{prop:env}
Suppose that the feasible region of problem \eqref{prob:l2-constrain} is nonempty,  then the penalty problem \eqref{prob:l2-env} shares the same optimal solution set with the constrained problem \eqref{prob:l2-constrain}, provided that $\lambda>\lambda^*$ for some $\lambda^*>0$.
\end{proposition}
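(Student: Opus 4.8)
The plan is to recognize problem \eqref{prob:l2-env} as a special instance of the general penalty problem \eqref{prob:penalty} and then simply invoke Corollary \ref{crol:equv}. First I would observe that problem \eqref{prob:l2-constrain} is precisely problem \eqref{prob:constrain} specialized to $p=2$, and that problem \eqref{prob:l2-env} is precisely problem \eqref{prob:penalty} with $p=2$ and the penalty function $\phi=\phi_2$ given by \eqref{def:phi2}. Substituting $\phi_2(z)=\frac{1}{2}(z-\sigma)_+^2$ and $p=2$ into \eqref{prob:penalty} reproduces the objective $\|x\|_0+\frac{\lambda}{2}(\|Ax-b\|_2-\sigma)_+^2$ through the identity \eqref{eq:phi2squar}, so the two problems in the statement are exactly the constrained/penalty pair to which Corollary \ref{crol:equv} applies.

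Next I would check that $\phi_2$ meets the hypotheses of Corollary \ref{crol:equv}, namely that it satisfies H\ref{hypo:1} together with condition \eqref{eq:phiSpeci}; both facts were already recorded in the example introducing \eqref{def:phi2}, and they are elementary. Concretely, $\phi_2$ is continuous because $z\mapsto (z-\sigma)_+$ is continuous and squaring is continuous; it is nondecreasing on $\mathbb{R}_+$ because $(z-\sigma)_+$ is nonnegative and nondecreasing and squaring preserves order on $\mathbb{R}_+$; and $\phi_2(0)=\frac{1}{2}(-\sigma)_+^2=0$ since $\sigma\ge 0$ forces $(-\sigma)_+=0$. Moreover, $\phi_2(z)=0$ exactly when $z\le\sigma$ and $\phi_2(z)=\frac{1}{2}(z-\sigma)^2>0$ when $z>\sigma$, which is verbatim condition \eqref{eq:phiSpeci}.

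Finally, the assumed nonemptiness of the feasible region of \eqref{prob:l2-constrain} is exactly the nonemptiness of $G_\sigma$ (for $p=2$) required by Corollary \ref{crol:equv}, so all of its hypotheses are in force. Applying that corollary produces a threshold $\lambda^*>0$ for which problems \eqref{prob:l2-constrain} and \eqref{prob:l2-env} share the same optimal solution set whenever $\lambda>\lambda^*$, which is the assertion. I do not expect any genuine obstacle: the substantive work was done in establishing the exact-penalty framework of Theorem \ref{thm:equivalence} and Corollary \ref{crol:equv}, and the only remaining content here is the routine verification that the envelope-type penalty $\phi_2$ belongs to that framework.
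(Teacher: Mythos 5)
Your proposal is correct and follows exactly the paper's route: the paper states Proposition \ref{prop:env} as an immediate consequence of Corollary \ref{crol:equv}, having already noted in the example introducing \eqref{def:phi2} that $\phi_2$ satisfies H\ref{hypo:1} and condition \eqref{eq:phiSpeci}. Your explicit verification of these properties of $\phi_2$ is a welcome but routine filling-in of details the paper leaves implicit.
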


In particular, when $\sigma=0$, problem \eqref{prob:l2-env} reduces to the least square penalty problem \eqref{prob:l2-unconstrain}. From Proposition \ref{prop:env}, we deduce that \eqref{prob:l2-unconstrain} has the same optimal solution set as the linear constrained problem \eqref{prob:lineq} when $\lambda$ is sufficiently large. This result coincides with that obtained in Corollary \ref{coro:least}.


In addition to the advantage of  exact penalization, we prove in the next proposition that the penalty term in \eqref{prob:l2-env} is differentiable with a Lipschitz continuous gradient. For simplicity, we define $\Phi:\mathbb{R}^n\rightarrow\mathbb{R}$ at $x\in\mathbb{R}^n$ as
\begin{equation}\label{eq:defPhi}
\Phi(x):=\phi_2(\|Ax-b\|_2)=\frac{1}{2}(\|Ax-b\|_2-\sigma)_+^2.
\end{equation}
We denote by $\|B\|_2$  the largest singular value of $B\in\mathbb{R}^{d\times r}$. We also need to recall the notion of firmly nonexpansive. Operator $J:\mathbb{R}^d\rightarrow\mathbb{R}^r$ is called firmly nonexpansive (resp., nonexpansive) if for all $x, y\in\mathbb{R}^d$
\begin{equation}
\|Jx-Jy\|_2^2\le \langle Jx-Jy, x-y\rangle~~ (\mathrm{resp.,} \|Jx-Jy\|_2\le \|x-y\|_2).
\end{equation}
Obviously, a firmly nonexpansive operator is nonexpansive. Now, we are ready to show $\Phi$ is differentiable with a Lipshitz continuous gradient.

\begin{proposition}\label{prop:diff}
Let $\Phi:\mathbb{R}^n\rightarrow\mathbb{R}$ be defined by \eqref{eq:defPhi}. Then function $\Phi$ is differentiable with a Lipschitz continuous gradient. In particular, for $x\in\mathbb{R}^n$
\begin{itemize}
\item [(i)] $\nabla\Phi(x)=\begin{cases}
      \bf{0}_n,&\mathrm{~~if ~~}\|Ax-b\|_2\le \sigma,\\
    (1-\frac{\sigma}{\|Ax-b\|_2})A^\top (Ax-b), &\mathrm{~~else,}
    \end{cases}$
\item [(ii)] $\nabla\Phi$ is Lipschitz continuous with the Lipschitz constant $\|A\|_2^2$.
\end{itemize}
\end{proposition}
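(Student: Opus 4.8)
The plan is to peel off the linear map and reduce the whole statement to an analysis of the scalar-of-norm outer function. First I would write $\Phi(x)=\psi(Ax-b)$, where $\psi:\mathbb{R}^m\to\mathbb{R}$ is given by $\psi(u):=\tfrac12(\|u\|_2-\sigma)_+^2$, and observe that $\psi$ is exactly one half of the squared Euclidean distance to the closed ball $B:=\{u\in\mathbb{R}^m:\|u\|_2\le\sigma\}$, since $d(u,B)=(\|u\|_2-\sigma)_+$. Once I know $\psi$ is differentiable with gradient $\nabla\psi$, the chain rule gives $\nabla\Phi(x)=A^\top\nabla\psi(Ax-b)$, so both assertions reduce to (a) computing $\nabla\psi$ and (b) showing $\nabla\psi$ is nonexpansive; the factor $\|A\|_2^2$ in (ii) will then come from applying $\|A^\top\|_2=\|A\|_2$ twice.

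For the gradient I would argue by cases. On the open set $\{\|u\|_2>\sigma\}$ one has $u\neq\mathbf{0}_m$, so $\|\cdot\|_2$ is smooth there and $\psi(u)=\tfrac12(\|u\|_2-\sigma)^2$ is a smooth composition with $\nabla\psi(u)=(1-\sigma/\|u\|_2)u$; on the open set $\{\|u\|_2<\sigma\}$ the function $\psi$ vanishes identically, so $\nabla\psi(u)=\mathbf{0}_m$. The delicate point is the boundary sphere $\{\|u\|_2=\sigma\}$, where the positive part creates a potential kink: here I would use the triangle inequality to get $0\le\psi(u+v)=\tfrac12\big((\|u+v\|_2-\sigma)_+\big)^2\le\tfrac12\|v\|_2^2$, whence $\psi(u+v)-\psi(u)=o(\|v\|_2)$ and $\psi$ is differentiable at $u$ with $\nabla\psi(u)=\mathbf{0}_m$. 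Since the two case formulas agree on the sphere, $\nabla\psi$ is a single well-defined vector field, and substituting into $\nabla\Phi(x)=A^\top\nabla\psi(Ax-b)$ yields formula (i).

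Finally, for the Lipschitz estimate I would recognize the computed gradient as $\nabla\psi=I-P_B$, where $P_B$ is the metric projection onto the ball $B$ (indeed $P_B(u)=u$ for $\|u\|_2\le\sigma$ and $P_B(u)=\sigma u/\|u\|_2$ otherwise, so $u-P_B(u)$ matches the two cases above). Because $B$ is nonempty, closed and convex, $P_B$ is firmly nonexpansive, and firm nonexpansiveness of $P_B$ is equivalent to that of $I-P_B$ (expand $\|(I-P_B)u-(I-P_B)w\|_2^2$ and compare with $\langle(I-P_B)u-(I-P_B)w,\,u-w\rangle$); hence $\nabla\psi$ is firmly nonexpansive, in particular nonexpansive. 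Combining this with the chain rule gives
\[
\|\nabla\Phi(x)-\nabla\Phi(y)\|_2\le\|A^\top\|_2\,\|\nabla\psi(Ax-b)-\nabla\psi(Ay-b)\|_2\le\|A\|_2\,\|A(x-y)\|_2\le\|A\|_2^2\,\|x-y\|_2,
\]
which is (ii). The main obstacle is the boundary differentiability in the second paragraph; everything else is either the routine chain rule or the standard firmly nonexpansive calculus recalled just before the statement.
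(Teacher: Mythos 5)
Your proposal is correct and follows essentially the same route as the paper: factor $\Phi$ through the outer function $\varphi(u)=\tfrac12(\|u\|_2-\sigma)_+^2$, identify $\nabla\varphi=I-P_B$ with $P_B$ the projection onto the ball, invoke firm nonexpansiveness of $I-P_B$ for a nonempty closed convex set, and push through $A$ and $A^\top$ to get the constant $\|A\|_2^2$. The only difference is that you explicitly verify differentiability on the sphere $\{\|u\|_2=\sigma\}$ via the estimate $0\le\psi(u+v)\le\tfrac12\|v\|_2^2$, a detail the paper compresses into ``one can check''; this is a welcome addition, not a deviation.
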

\begin{proof}
 We  define $\varphi:\mathbb{R}^m\rightarrow\mathbb{R}$, at $y\in\mathbb{R}^m$ as
 \begin{equation}\label{def:varphi}
 \varphi(y)=\frac{1}{2}(\|y\|_2-\sigma)_+^2.
 \end{equation}
 Obviously, $\Phi(x)=\varphi(Ax-b)$ for any $x\in\mathbb{R}^n$. One can  check that for any $y\in\mathbb{R}^m$,
 \begin{equation}\label{eq:8}
 \nabla\varphi(y)=\begin{cases}
      \bf{0}_n,&\mathrm{~~if ~~}\|y\|_2\le \sigma,\\
    (1-\frac{\sigma}{\|y\|_2})y, &\mathrm{~~else.}
    \end{cases}
 \end{equation}
Thus  $\nabla\Phi=A^\top\nabla\varphi(Ax-b)$ and Item (i) follows.

We next prove Item (ii). We first show $\nabla\varphi$ is nonexpansive. Let $P:\mathbb{R}^n\rightarrow \mathbb{R}^n$ be the projection operator onto the set $\{y: \|y\|_2\le \sigma\}$.  we have $\nabla\varphi=I-P$. Since the set $\{y: \|y\|_2\le \sigma\}$ is nonempty closed convex, operator $I-P$ is firmly nonexpansive, by Proposition 12.27 in \cite{Bauschke-Combettes:11}. Thus, $\nabla\varphi$ is nonexpansive. For any $x, z\in\mathbb{R}^n$, we have that
 \begin{eqnarray*}
 \|\nabla\Phi(x)-\nabla\Phi(z)\|_2&=&\|A^\top(\nabla\varphi(Ax-b)-\nabla\varphi(Az-b))\|_2\\
 &\le&\|A\|_2\|\nabla\varphi(Ax-b)-\nabla\varphi(Az-b)\|_2\\
 &\le&\|A\|_2\|Ax-Az\|_2\\
 &\le&\|A\|_2^2\|x-z\|_2.
 \end{eqnarray*}
 The above inequality leads to Item (ii).
\end{proof}

\begin{remark}
As mentioned in the introduction, the differentiability of the penalty term has important effect on the algorithmic design for problem \eqref{prob:penalty}. According to Proposition \ref{prop:diff}, by choosing $\phi=\phi_2$, the penalty term is differentiable with a Lipschitz continuous gradient in problem \eqref{prob:penalty} for $p=2$. As a result, we are able to develop efficient numerical algorithms with theoretical convergence guarantee for solving this problem, e.g., proximal-gradient type algorithms. 

\end{remark}
\section{Cardinality of optimal solution sets of problems \eqref{prob:constrain} and \eqref{prob:penalty}}\label{sec:minimizer}
In this section, we investigate the cardinality of optimal solution sets of problems \eqref{prob:constrain} and \eqref{prob:penalty}. We also discuss the strictness of their optimal solutions as a byproduct. As it is shown in Section \ref{sec:sta}, optimal solution sets of these two problems are closely related to $\Omega_k$, for $k\in\mathbb{N}_L^0$, defined by Definition \ref{def:sRhoOmega}. Therefore, we shall first consider the cardinality of $\Omega_k$. The cases of $p=1$ and $p=2$ are discussed in separate subsections, since the analysis and results are different for these two cases.

\subsection{Cardinality of $\Omega_k$ when $p=2$}
For the case of $p=2$, we have the following proposition concerning the cardinality of $\Omega_k$.
\begin{proposition}\label{prop:cardiP2}
For $\phi$ satisfying H\ref{hypo:1} and $p=2$,  let $L$ and $s_i, \rho_i, \Omega_i$ for $i\in\mathbb{N}_{L}^0$ be defined by Definition \ref{def:sRhoOmega}. For $k\in \mathbb{N}_{L}^0$, if $\sharp\{z:\phi(z)=\rho_k\}=1$,
 then we have
\begin{equation}\label{eq:15}
\sharp \Omega_k\le C_n^{s_k}.
\end{equation}

\end{proposition}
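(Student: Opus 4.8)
The plan is to exhibit an injection from $\Omega_k$ into the family of all size-$s_k$ subsets of $\mathbb{N}_n$ and then count. The natural map to use is the support map $x\mapsto\mathrm{supp}(x)$. Since there are exactly $C_n^{s_k}$ subsets of $\mathbb{N}_n$ of cardinality $s_k$, injectivity of this map on $\Omega_k$ immediately yields \eqref{eq:15}.

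First I would record that the support map lands in the right family. By Item (v) of Proposition \ref{prop:rhoSOmega}, every $x\in\Omega_k$ satisfies $\|x\|_0=s_k$, so $\Lambda:=\mathrm{supp}(x)$ is a subset of $\mathbb{N}_n$ with $\sharp\Lambda=s_k$. Hence $\mathrm{supp}$ maps $\Omega_k$ into $\mathcal{S}:=\{\Lambda\subseteq\mathbb{N}_n:\sharp\Lambda=s_k\}$, and $\sharp\mathcal{S}=C_n^{s_k}$.

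The heart of the argument is injectivity, and this is exactly where the hypothesis $\sharp\{z:\phi(z)=\rho_k\}=1$ together with $p=2$ enters. Suppose $x,x'\in\Omega_k$ share the same support $\Lambda$. By Item (i) of Lemma \ref{lemma:Lambdastrict} the submatrix $A_\Lambda$ has full column rank, so the objective of $\min\{\|A_\Lambda y-b\|_2:y\in\mathbb{R}^{s_k}\}$ is strictly convex and the problem has a single minimizer. Invoking Item (ii) of Lemma \ref{lemma:Lambdastrict}, which needs precisely the single-point level-set condition, both $x_\Lambda$ and $x'_\Lambda$ are this unique minimizer, so $x_\Lambda=x'_\Lambda$. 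Since $x$ and $x'$ both vanish on $\mathbb{N}_n\backslash\Lambda$, it follows that $x=x'$, and therefore the support map is injective on $\Omega_k$.

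Combining these gives $\sharp\Omega_k\le\sharp\mathcal{S}=C_n^{s_k}$. I do not expect a genuine obstacle: the counting is immediate, and the only substantive input, uniqueness of the restricted least-squares solution, is already packaged in Lemma \ref{lemma:Lambdastrict}. The role of $p=2$ is essential, since it is the strict convexity of $\|A_\Lambda\cdot-b\|_2^2$ that forces a single minimizer; for $p=1$ this uniqueness can fail even under full column rank, which is why that case is handled separately in the next subsection.
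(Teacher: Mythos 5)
Your proposal is correct and follows essentially the same route as the paper: the paper covers $\Omega_k$ by the sets $\Omega'_\Lambda$ of vectors supported in each size-$s_k$ index set and shows each piece meets $\Omega_k$ in at most one point via Item (ii) of Lemma \ref{lemma:Lambdastrict}, which is just the covering formulation of your injectivity of the support map. The key input — uniqueness of the restricted least-squares minimizer under the single-level-set hypothesis — is identical in both arguments.
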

\begin{proof}
By the definition of $\Omega_k$, we have $\|x\|_0=s_k$ for any $x\in\Omega_k$. Let $\mathcal{O}:=\{\Lambda: \Lambda\subseteq \mathbb{N}_n, \sharp \Lambda=s_k\}$. We will first prove that for any $\Lambda\in\mathcal{O}$, $\sharp (\Omega'_\Lambda\cap \Omega_k)\le 1$,
where  $\Omega'_\Lambda:=\{x': x'_{\mathbb{N}_n\backslash \Lambda}=\mathbf{0}_{n-s_k}, x'\in\mathbb{R}^n\}$.

For  $\Lambda\in\mathcal{O}$, if there doest not exist $x\in\Omega_k$ such that $\mathrm{supp}(x)=\Lambda$, then we have $\Omega'_\Lambda\cap\Omega_k=\emptyset$, that is $\sharp(\Omega'_\Lambda\cap\Omega_k)=0$. Otherwise, if there exists  $x\in\Omega_k$ such that $\mathrm{supp}(x)=\Lambda$, we then prove $\Omega'_\Lambda\cap\Omega_k=\{x\}$.
 By the definition of $\Omega_k$ we have $\phi(\|Ax'-b\|_2)=\rho_k$ for any $x'\in\Omega_k$. By Item (ii) of Lemma \ref{lemma:Lambdastrict}, we have that $\|Ay-b\|_2>t$ for all $x\neq y\in \Omega'_\Lambda$. Then, we get $\phi(\|Ay-b\|_2)\neq \rho_k$ for all $x\neq y\in \Omega'_\Lambda$ due to $\sharp \{z: \phi(z)=\rho_k\}=1$. Thus, $\Omega'_\Lambda\cap \Omega_k=\{x\}$, that is $\sharp (\Omega'_\Lambda\cap \Omega_k)=1$.  Therefore, we get $\sharp (\Omega'_\Lambda\cap \Omega_k)\le 1$ for any $\Lambda\in\mathcal{O}$.

 Further, since $\Omega_k\subseteq \bigcup _{\Lambda\in\mathcal{O}}\Omega'_\Lambda$ and $\sharp \mathcal{O}=C_{n}^{s_k}$, we get \eqref{eq:15} of this proposition.
\end{proof}

The next corollary is obtained immediately.
\begin{corollary}\label{Coro:cardiP2}
For $\phi$ satisfying H\ref{hypo:1} and $p=2$,  let $L$ and $s_i, \rho_i, \Omega_i$ for $i\in\mathbb{N}_{L}^0$ be defined by Definition \ref{def:sRhoOmega}. If $\phi$ is strictly increasing, then
$\sharp \Omega_k\le C_n^{s_k}
$
for any $k\in \mathbb{N}_{L}^0$.
\end{corollary}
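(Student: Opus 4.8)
The plan is to derive this directly from Proposition \ref{prop:cardiP2}, whose conclusion $\sharp\Omega_k\le C_n^{s_k}$ is exactly the bound we want, but stated under the pointwise hypothesis $\sharp\{z:\phi(z)=\rho_k\}=1$. So the only thing that needs checking is that strict monotonicity of $\phi$ forces this hypothesis to hold for every $k\in\mathbb{N}_L^0$; once that is established, the corollary is immediate.

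First I would record the elementary observation that a strictly increasing function is injective: if $z_1<z_2$ then $\phi(z_1)<\phi(z_2)$, and in particular $\phi(z_1)\ne\phi(z_2)$. Consequently, for any real value $c$ the level set $\{z\in\mathbb{R}_+:\phi(z)=c\}$ can contain at most one point, i.e. $\sharp\{z:\phi(z)=c\}\le 1$. Applying this with $c=\rho_k$ gives the upper bound $\sharp\{z:\phi(z)=\rho_k\}\le 1$.

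Next I would verify that this level set is in fact nonempty, so that the cardinality is exactly one rather than zero. By Definition \ref{def:sRhoOmega}, $\rho_k$ is the minimal value of $\phi(\|Ax-b\|_2)$ over the associated sparsity-constrained feasible set, and this minimum is attained by Lemma \ref{lema:L0constSolu}. Taking any minimizer $x$ and setting $z:=\|Ax-b\|_2$ yields a point with $\phi(z)=\rho_k$, so $\{z:\phi(z)=\rho_k\}\neq\emptyset$. Combining this with the previous paragraph gives $\sharp\{z:\phi(z)=\rho_k\}=1$ for every $k\in\mathbb{N}_L^0$.

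With the hypothesis of Proposition \ref{prop:cardiP2} now confirmed for each $k$, I would simply invoke that proposition to obtain $\sharp\Omega_k\le C_n^{s_k}$ for all $k\in\mathbb{N}_L^0$, completing the proof. There is no genuine obstacle here — the content of the corollary is entirely carried by Proposition \ref{prop:cardiP2}, and the reduction rests only on the injectivity of a strictly increasing map. The single point deserving a moment's care is the nonemptiness of the level set, which is what upgrades the trivial bound $\sharp\{z:\phi(z)=\rho_k\}\le 1$ to the equality required by the proposition, and this is guaranteed by the attainment built into Definition \ref{def:sRhoOmega}.
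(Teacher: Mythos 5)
Your proposal is correct and follows the same route as the paper, which states the corollary as an immediate consequence of Proposition \ref{prop:cardiP2}: strict monotonicity makes $\phi$ injective, so each level set $\{z:\phi(z)=\rho_k\}$ has at most one element, and nonemptiness follows from the attainment of $\rho_k$ guaranteed by Definition \ref{def:sRhoOmega} and Lemma \ref{lema:L0constSolu}. You merely spell out the reduction that the paper leaves implicit, and your care about nonemptiness of the level set is a reasonable (if minor) addition.
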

\subsection{Cardinality of $\Omega_k$ when $p=1$}
We first present an important lemma. To this end, we define, for any $x\in\mathbb{R}^d$ and $\delta>0$,  $U(x; \delta):=\{x': \|x-x'\|_2\le \delta, x'\in\mathbb{R}^d\}$.
For convenience, we adopt an assumption for a vector in $\mathbb{R}^m$.
\begin{hypothesis}\label{hypo:2}
For a vector $a\in\mathbb{R}^m$, there holds $\sum_{i=1}^m z_i a_i\neq 0$  for all $z\in\{z: z_i=\pm 1, i\in\mathbb{N}_m\}$.
\end{hypothesis}

\begin{lemma}\label{lema:l1}
Let $B$ an $m\times d$ real matrix and $b\in\mathbb{R}^m$. Let $\hat y\in\mathbb{R}^d$ satisfy   $\|\hat y\|_0=d$. Then
\begin{itemize}
\item [(i)] if $d=1$ and $B$ satisfies H\ref{hypo:2}, then $\sharp \hat S\le 2^m$, where $\hat S:=\{y: \|By-b\|_1=\|B\hat y-b\|_1, y\in\mathbb{R}\}$;
\item [(ii)] if $d\ge 2$ and $\|B\hat y-b\|_0\ge m+2-d$, then
for any $\delta>0$, there exists $\hat y\neq y\in U(\hat y; \delta)$ such that $\|By-b\|_1=\|B\hat y-b\|_1$.
\end{itemize}
\end{lemma}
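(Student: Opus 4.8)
The two parts are logically independent, so I would treat them separately, but both rest on the same idea: fixing the sign pattern of the residual vector $By-b$ turns the $\ell_1$-norm $\|By-b\|_1$ into an affine function, on which the analysis becomes elementary linear algebra.

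For part (i), since $d=1$ the matrix $B$ reduces to its single column $a\in\mathbb{R}^m$, and $\|By-b\|_1=\sum_{i=1}^m|a_iy-b_i|$ for $y\in\mathbb{R}$. Set $c:=\|B\hat y-b\|_1$. For each sign vector $z\in\{-1,+1\}^m$ I would introduce the affine function $g_z(y):=\sum_{i=1}^m z_i(a_iy-b_i)$, whose slope is exactly $\sum_{i=1}^m z_ia_i$; by H\ref{hypo:2} this slope is nonzero, so $g_z(y)=c$ has at most one solution $y_z$. The key observation is that every $y\in\hat S$ coincides with some $y_z$: choosing $z_i:=\mathrm{sign}(a_iy-b_i)$ with the convention $\mathrm{sign}(0):=1$ gives $|a_iy-b_i|=z_i(a_iy-b_i)$ for every $i$ (the identity holds trivially when $a_iy=b_i$), hence $c=\|By-b\|_1=g_z(y)$ and therefore $y=y_z$. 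Thus $\hat S\subseteq\{y_z:z\in\{-1,+1\}^m\}$, and since there are $2^m$ sign vectors, $\sharp\hat S\le 2^m$.

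For part (ii), let $r:=B\hat y-b$ and $Z:=\{i\in\mathbb{N}_m:r_i=0\}$. The hypothesis $\|B\hat y-b\|_0\ge m+2-d$ says precisely that $\sharp Z=m-\|r\|_0\le d-2$. I would look for a perturbation $y=\hat y+v$ with $v$ small and nonzero that leaves $\|By-b\|_1$ unchanged. Writing $\|By-b\|_1=\sum_i|r_i+(Bv)_i|$ and imposing the two linear conditions (a) $(Bv)_i=0$ for all $i\in Z$ and (b) $\sum_{i\notin Z}\mathrm{sign}(r_i)(Bv)_i=0$, one gets, for $\|v\|_2$ small enough that $r_i+(Bv)_i$ keeps the sign of $r_i$ on the indices $i\notin Z$, the chain $\|By-b\|_1=\|r\|_1+\sum_{i\notin Z}\mathrm{sign}(r_i)(Bv)_i=\|r\|_1=\|B\hat y-b\|_1$. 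Conditions (a) and (b) form a homogeneous linear system in $v\in\mathbb{R}^d$ with $\sharp Z+1\le(d-2)+1=d-1<d$ equations, which therefore admits a nonzero solution $v_0$. Scaling $v_0$ to $tv_0$ with $t>0$ small places $\hat y+tv_0$ in $U(\hat y;\delta)$ while preserving the sign condition, giving the required $y\neq\hat y$. Since $\|\hat y\|_0=d$, such $y$ automatically keeps full support, which is what matters for the later cardinality statements.

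The affine bookkeeping is routine; the one step demanding care is making each argument watertight at the degenerate coordinates. In part (i) the delicate point is the breakpoints where $a_iy=b_i$, handled by the $\mathrm{sign}(0):=1$ convention so that no solution of $\|By-b\|_1=c$ escapes the finite list $\{y_z\}$. In part (ii) the crux is the count $\sharp Z\le d-2$: this is exactly where the hypothesis $\|B\hat y-b\|_0\ge m+2-d$ is consumed, and it is what guarantees that the constraint system (a)--(b) remains underdetermined, so that an arbitrarily small nonzero perturbation exists.
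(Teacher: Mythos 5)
Your proposal is correct and follows essentially the same route as the paper: in part (i) you cover $\hat S$ by the at-most-singleton solution sets of the $2^m$ affine equations indexed by sign vectors, exactly as the paper does with its sets $S_z$; in part (ii) your homogeneous system (a)--(b) is precisely the paper's subspace $S_0$, with the same dimension count $d-(\sharp Z+1)\ge 1$ coming from $\|B\hat y-b\|_0\ge m+2-d$, and the same small-scaling/sign-preservation step. No gaps.
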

\begin{proof}
For simplicity, we set $\hat \sigma:=\|B\hat y-b\|_1$.
We first prove Item (i). Let $O:=\{z: z_i=\pm 1, i\in\mathbb{N}_m\}$. Obviously $\sharp O=2^m$. Then for any $z\in O$, we set $S_z:=\{y: \sum_{i=1}^m z_i(By-b)_i=\hat\sigma, y\in\mathbb{R}\}$. It is clear that $\sharp S_z= 1$ due to H\ref{hypo:2}. Then we have $\hat S\subseteq \bigcup_{z\in  O} S_z$. Therefore, $\sharp \hat S\le 2^m$.  We get Item (i).

We next prove Item (ii).
Since $\|B\hat y-b\|_0\ge m+2-d$, we set $z:=\mathrm{sign}(B\hat y-b)$, where $\mathrm{sign}(\cdot)$ is the component-wise signum function ($0$ is assigned to $0$). Set $\Lambda:=\mathrm{supp}(z)$. We have $\sharp \Lambda\ge m+2-d$. Then  there exists $\delta_1>0$ such that for any $y\in U(\hat y; \delta_1)$, there holds  $\mathrm{sign}(By-b)_i=z_i$ for any $i\in\Lambda$.
Let $S:=\{y: \sum_{i\in\Lambda} z_i(By-b)_i=\hat \sigma, (By-b)_j=0 \mathrm{~for~} j\in\mathbb{N}_m\backslash \Lambda, y\in\mathbb{R}^d\}$ and $S_0:=\{y: \sum_{i\in\Lambda} z_i(By)_i=0,  (By)_j=0 \mathrm{~for~} j\in\mathbb{N}_m\backslash \Lambda, y\in\mathbb{R}^d\}$. Clearly, $\hat y\in S$. Therefore, we have $S=\{y: y=\hat y+\tilde y, \tilde y\in S_0\}$. Obviously, $S_0$ is an Euclid linear space with dimension no less then $d-(m-\sharp \Lambda+1)\ge 1$. Thus, for any $\delta>0$, there exists $0\neq y^*\in S_0$ such that  $\| y^*\|_2\le \min\{\delta_1, \delta\}$. Set $y=\hat y+y^*$. Then, we have $y\in U(\hat y; \delta)$, $\mathrm{sign}(By-b)_i=z_i$ for $i\in\Lambda$ and $y\in S$. Thus, we have $\|By-b\|_1=\sum_{i\in\Lambda}z_i(By-b)_i=\hat\sigma$. Then, we get this lemma.
\end{proof}

Now, we are ready to present a proposition on the cardinality of $\Omega_k$ for $p=1$.
\begin{proposition}\label{prop:cardiP1}
For $\phi$ satisfying H\ref{hypo:1} and $p=1$,  let $L$ and $s_i, \rho_i, \Omega_i$ for $i\in\mathbb{N}_{L}^0$ be defined by Definition \ref{def:sRhoOmega}.  For $k\in \mathbb{N}_{L}^0$, the following statements hold:
\begin{itemize}
\item [(i)] For $k=L$, $\sharp \Omega_k=1$.
\item [(ii)] For $k=L-1$, if $s_{L-1}=1$, all the columns of $A$ satisfy H\ref{hypo:2} and $\sharp\{z: \phi(z)=\rho_{L-1}\}=1$, then $\sharp \Omega_{L-1}\le n2^m$.
\item [(iii)] For $k=L-1$ with $s_k\ge 2$ or $k\le L-2$, if there exists $x\in\Omega_k$ such that $\|Ax-b\|_0\ge m+2-s_k$, then for any $\delta>0$ there exists $x\neq x'\in U(x; \delta)$ such that $x'\in\Omega_k$, therefore,  $\sharp\Omega_k=+\infty$.
\end{itemize}
\end{proposition}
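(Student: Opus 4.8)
The plan is to treat the three items separately, each time reducing the cardinality question to a result already in hand: Proposition \ref{prop:rhoSOmega} (which tells us $\Omega_k=\{x:\|x\|_0=s_k,\ \phi(\|Ax-b\|_1)=\rho_k\}$) and the two parts of Lemma \ref{lema:l1}. Item (i) is immediate: $k=L$ forces $s_L=0$ by Proposition \ref{prop:rhoSOmega}(ii), whence $\Omega_L=\{\mathbf{0}_n\}$ by Proposition \ref{prop:rhoSOmega}(iv), so $\sharp\Omega_k=1$.

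For Item (ii) I would partition $\Omega_{L-1}$ by the single-element support of its members and bound each block via Lemma \ref{lema:l1}(i). Since $\sharp\{z:\phi(z)=\rho_{L-1}\}=1$, there is a unique $\hat\sigma\ge0$ with $\phi(\hat\sigma)=\rho_{L-1}$; by Proposition \ref{prop:rhoSOmega}(v) every $x\in\Omega_{L-1}$ then satisfies $\|x\|_0=s_{L-1}=1$ and $\|Ax-b\|_1=\hat\sigma$. Writing $\Omega_{L-1}=\bigcup_{j=1}^n\{x\in\Omega_{L-1}:\mathrm{supp}(x)=\{j\}\}$, a member supported on $\{j\}$ is determined by its single nonzero entry $t\neq0$ and obeys $\|t\alpha_j-b\|_1=\hat\sigma$, where $\alpha_j$ denotes the $j$th column of $A$. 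When such a member exists, I apply Lemma \ref{lema:l1}(i) to the $m\times1$ matrix $B=\alpha_j$ (which satisfies H\ref{hypo:2} by assumption) with $d=1$ and $\hat y=t$, obtaining $\sharp\{y\in\mathbb{R}:\|y\alpha_j-b\|_1=\hat\sigma\}\le2^m$. Hence each of the $n$ blocks contributes at most $2^m$ points, and summing yields $\sharp\Omega_{L-1}\le n2^m$.

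For Item (iii) I first note that both cases force $s_k\ge2$: for $k\le L-2$ the strictly decreasing chain $s_k>\cdots>s_L=0$ of Proposition \ref{prop:rhoSOmega}(ii) gives $s_k\ge L-k\ge2$. My strategy is to exhibit the hypothesized $x$ as a non-isolated point of $\Omega_k$, which immediately forces $\sharp\Omega_k=+\infty$. Put $\Lambda=\mathrm{supp}(x)$, so $\sharp\Lambda=s_k$ and $x_\Lambda$ has no zero entry; since $A_\Lambda x_\Lambda=Ax$, the assumption becomes $\|A_\Lambda x_\Lambda-b\|_0\ge m+2-s_k$, which is precisely the hypothesis of Lemma \ref{lema:l1}(ii) for $B=A_\Lambda$, $d=s_k\ge2$, $\hat y=x_\Lambda$. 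Because any point of a smaller ball $U(x;\delta_1)$ also lies in $U(x;\delta)$ when $\delta_1\le\delta$, it suffices to produce the required $x'$ for arbitrarily small $\delta$; so I set $\delta_0:=\min\{\delta,\ \frac{1}{2}\min\{|x_i|:i\in\Lambda\}\}>0$ and apply Lemma \ref{lema:l1}(ii) at radius $\delta_0$ to get $y\neq x_\Lambda$ in $U(x_\Lambda;\delta_0)$ with $\|A_\Lambda y-b\|_1=\|A_\Lambda x_\Lambda-b\|_1$. Extending $y$ by zeros off $\Lambda$ defines $x'$; the choice of $\delta_0$ keeps every entry of $y$ nonzero, so $\|x'\|_0=s_k$, while $\|Ax'-b\|_1=\|Ax-b\|_1$ gives $\phi(\|Ax'-b\|_1)=\rho_k$. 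Thus $x'\in\Omega_k$ by Proposition \ref{prop:rhoSOmega}(v), with $x\neq x'\in U(x;\delta)$. As every neighborhood of $x$ meets $\Omega_k\setminus\{x\}$, the point $x$ is not isolated, and a finite set has only isolated points, so $\sharp\Omega_k=+\infty$.

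The main obstacle will be the bookkeeping in Item (iii): correctly rewriting the $\ell_0$ lower bound as the hypothesis of Lemma \ref{lema:l1}(ii), and, above all, ensuring the perturbation $y$ preserves the support so that $\|x'\|_0=s_k$ (rather than dropping below) while the claim must hold for \emph{every} $\delta$. The device of shrinking $\delta$ to $\delta_0$ and exploiting the nestedness of the balls $U(x;\cdot)$ is exactly what reconciles the ``for any $\delta$'' requirement with the support-preserving constraint; once that is in place, the passage from non-isolation to $\sharp\Omega_k=+\infty$ is a one-line argument.
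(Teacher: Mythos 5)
Your proposal is correct and follows essentially the same route as the paper's proof: Item (i) via $\Omega_L=\{\mathbf{0}_n\}$, Item (ii) by covering $\Omega_{L-1}$ with the $n$ single-support blocks and bounding each by $2^m$ via Lemma \ref{lema:l1}(i), and Item (iii) by restricting to $A_\Lambda$, invoking Lemma \ref{lema:l1}(ii), and shrinking the radius to $\min\{\delta,\tfrac12\min_{i\in\Lambda}|x_i|\}$ so the perturbation preserves the support. The only additions beyond the paper's write-up are the explicit observation that $k\le L-2$ forces $s_k\ge 2$ and the closing non-isolation remark, both of which are consistent with the paper's (more terse) argument.
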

\begin{proof}
Item (i) follows from the fact that $\Omega_L=\{\mathbf{0}_{n}\}$.
We next prove Item (ii). By the definition of $\Omega_{L-1}$ and $s_{L-1}=1$, we have $\|x\|_0=1$ and $\phi(\|Ax-b\|_1)=\rho_{L-1}$ for any $x\in\Omega_{L-1}$. Since $\sharp \{z:\phi(z)=\rho_{L-1}\}=1$, then we assume $z^*$ satisfies $\phi(z^*)=\rho_{L-1}$.   Let $O_i:=\{x: \|Ax-b\|_1=z^*, \mathrm{supp}(x)=\{i\}, x\in\mathbb{R}^n\}$ for $i\in\mathbb{N}_n$. Then $\Omega_{L-1}\subseteq\bigcup _{i\in\mathbb{N}_n} O_i$. By Item (i) of Lemma \ref{lema:l1}, we have $\sharp O_i\le 2^m$ for $i\in\mathbb{N}_n$. Then we get Item (ii) of this proposition.

We finally prove Item (iii). Let $\Lambda:=\mathrm{supp}(x)$ and $\hat\sigma:=\|Ax-b\|_1=\|A_\Lambda x_\Lambda-b\|_1$. Since $x\in\Omega_k$, we have $\|x\|_0=s_k$ and $\phi(\hat\sigma)=\rho_k$. Set $\delta_1:=\frac{1}{2}\min\{|x_i|: i\in\Lambda\}$. For any $\delta>0$, set $\delta^*:=\min\{\delta_1, \delta\}$. Then by Item (ii) of Lemma \ref{lema:l1},   there exists $x\neq x'\in U(x; \delta^*)$ such that $\mathrm{supp}(x')=\Lambda$ and $\|Ax'-b\|_1=\hat\sigma$.   Then we have $x'\in U(x; \delta)$ and $x'\in \Omega_k$ by the definition of $\Omega_k$.  Thus $\sharp \Omega_k=+\infty$.  Item (iii) follows.
\end{proof}

\subsection {Cardinality of optimal solution set of problem \eqref{prob:constrain} when $p=2$}
This subsection is devoted to the cardinality of the optimal solution set of problem \eqref{prob:constrain} when $p=2$.

We begin with recalling the notion of strict minimizers.
\begin{definition}
For a function $g:\mathbb{R}^d\rightarrow \mathbb{R}$ and a set $S\subseteq\mathbb{R}^d$, $\hat x\in\mathbb{R}^d$ is a strict minimizer of the problem $\min\{g(x): x\in S\}$ if there exists $\delta>0$ such that for any $\hat x\neq x\in U(\hat x; \delta)\cap S$ there holds $g(x)>g(\hat x)$.
\end{definition}

It is clear that as $\sigma\ge \|b\|_2$, $\sharp\widehat\Omega(\sigma)=1$ and $\mathbf{0}_n$ is the unique minimizer of problem \eqref{prob:constrain}. We next only consider the case when $\sigma<\|b\|_2$.

\begin{theorem}\label{thm:strictP2}
For $\phi:\mathbb{R}_+\rightarrow\mathbb{R}$ defined at $z\ge 0$ as $\phi(z)=z$ and $p=2$,  let $L$ and $s_i, \rho_i, \Omega_i$ for $i\in\mathbb{N}_{L}^0$ be defined by Definition \ref{def:sRhoOmega}. Let $\sigma \in [\sigma^*, \|b\|_2)$, where $\sigma^*$ is defined by \eqref{def:sigmaStar}. Let $\widehat\Omega$ be defined by \eqref{def:omegaSharp}. Then the following statements are equivalent:
\begin{itemize}
\item [(i)] There exits $k\in \mathbb{N}_{L-1}^0$ such that $\rho_k=\sigma$.
\item [(ii)] $\|Ax-b\|_2=\sigma$ for any $x\in\widehat \Omega(\sigma)$.
\item [(iii)]  Any $x\in\widehat\Omega(\sigma)$ is a strict minimizer of problem \eqref{prob:constrain}.
\item [(iv)] $\sharp \widehat\Omega(\sigma)$ is finite.
\end{itemize}
\end{theorem}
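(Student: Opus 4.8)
The plan is to prove the four statements equivalent by establishing the cyclic chain of implications $(i)\Rightarrow(ii)\Rightarrow(iii)\Rightarrow(iv)\Rightarrow(i)$, leaning heavily on the machinery already developed for $\phi(z)=z$, chiefly Proposition \ref{prop:partialEauvalence}, Lemma \ref{lemma:Lambdastrict}, and Proposition \ref{prop:cardiP2}. Throughout, the key structural input is that for $p=2$ and $\phi(z)=z$ (which is strictly increasing), by Proposition \ref{prop:partialEauvalence} there is a unique $k\in\mathbb{N}_{L-1}^0$ with $\rho_k\le\sigma<\rho_{k+1}$, every $x\in\widehat\Omega(\sigma)$ satisfies $\|x\|_0=s_k$, and $\Omega_k\subseteq\widehat\Omega(\sigma)$; moreover the inclusion is strict precisely when $\rho_k<\sigma<\rho_{k+1}$, while $\widehat\Omega(\sigma)=\Omega_k$ when $\rho_k=\sigma$.

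For $(i)\Rightarrow(ii)$: if $\rho_k=\sigma$, then Item (ii) of Proposition \ref{prop:partialEauvalence} gives $\widehat\Omega(\sigma)=\Omega_k$, and by the characterization in Item (v) of Proposition \ref{prop:rhoSOmega} every $x\in\Omega_k$ satisfies $\phi(\|Ax-b\|_2)=\rho_k$, i.e. $\|Ax-b\|_2=\rho_k=\sigma$. For $(ii)\Rightarrow(iii)$: fix $x\in\widehat\Omega(\sigma)$ with $\|Ax-b\|_2=\sigma$ and set $\Lambda:=\mathrm{supp}(x)$, so $\sharp\Lambda=s_k$ and, by Item (i) of Lemma \ref{lemma:Lambdastrict}, $A_\Lambda$ has full column rank. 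Since $\phi(z)=z$ is trivially injective, $\sharp\{z:\phi(z)=\rho_k\}=1$, so Item (ii) of Lemma \ref{lemma:Lambdastrict} applies: $x_\Lambda$ is the \emph{unique} least-squares minimizer over $\mathbb{R}^{s_k}$, whence any $x'$ sharing the support $\Lambda$ but with $x'\neq x$ has $\|Ax'-b\|_2>\sigma$ and thus leaves $G_\sigma$. I would then argue that in a small enough ball $U(x;\delta)$, any feasible point with $\ell_0$-norm $\le s_k$ must have support contained in $\Lambda$ (perturbing within the ball cannot create new nonzero coordinates without exceeding the cardinality $s_k$ that, by $(ii)$ combined with Item (i) of Proposition \ref{prop:partialEauvalence}, every optimal neighbor must have), so every nearby feasible competitor either has strictly larger $\ell_0$-norm or leaves $G_\sigma$; hence $x$ is a strict minimizer.

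For $(iii)\Rightarrow(iv)$ I would argue contrapositively. By Proposition \ref{prop:cardiP2} (applicable since $\sharp\{z:\phi(z)=\rho_k\}=1$) each support set contributes at most one point to $\Omega_k$, so $\Omega_k$ is finite; the only way $\widehat\Omega(\sigma)$ can be infinite is therefore through the strict inclusion $\Omega_k\subset\widehat\Omega(\sigma)$ occurring when $\rho_k<\sigma<\rho_{k+1}$. In that regime the construction in the proof of Item (iii) of Proposition \ref{prop:partialEauvalence} produces, for any $x\in\Omega_k$ and arbitrarily small $\delta$, a distinct $y\in U(x;\delta)$ with the same support and $\|Ay-b\|_2\le\sigma$, so $y\in\widehat\Omega(\sigma)$; this simultaneously shows $\widehat\Omega(\sigma)$ is infinite and that such $x$ fails to be a strict minimizer, giving $\neg(iii)$. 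Thus strictness forces $\rho_k=\sigma$, which by $(i)\Rightarrow(ii)$ and the reasoning above yields finiteness, closing $(iii)\Rightarrow(iv)$. Finally $(iv)\Rightarrow(i)$ is again contrapositive: if no $k$ satisfies $\rho_k=\sigma$, then $\rho_k<\sigma<\rho_{k+1}$ strictly, and the same perturbation argument exhibits infinitely many elements of $\widehat\Omega(\sigma)$, contradicting finiteness.

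The main obstacle I expect is the perturbation/localization bookkeeping in $(ii)\Rightarrow(iii)$, specifically justifying rigorously that on a sufficiently small neighborhood $U(\hat x;\delta)$ no feasible competitor of cardinality $\le s_k$ can acquire support outside $\Lambda$. The clean way around this is to choose $\delta<\tfrac12\min\{|x_i|:i\in\Lambda\}$ so that every point of $U(\hat x;\delta)$ retains all of $\Lambda$ in its support; any such point with $\ell_0$-norm exactly $s_k$ then has support precisely $\Lambda$, and the strict-convexity uniqueness from Lemma \ref{lemma:Lambdastrict}(ii) forces $\|Ax'-b\|_2>\sigma$ for $x'\neq\hat x$, while points of strictly larger $\ell_0$-norm already have strictly larger objective. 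This reduces strictness entirely to the full-column-rank and uniqueness facts already proved, so the argument hinges on that single localization estimate rather than any new analytic input.
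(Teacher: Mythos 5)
Your proof is correct, but it runs the equivalence cycle in the opposite direction from the paper: you prove $(i)\Rightarrow(ii)\Rightarrow(iii)\Rightarrow(iv)\Rightarrow(i)$, whereas the paper proves $(i)\Rightarrow(iv)\Rightarrow(iii)\Rightarrow(ii)\Rightarrow(i)$. The practical difference is where the work lands. The paper gets strictness for free --- its $(iv)\Rightarrow(iii)$ is immediate because a finite set of minimizers consists of isolated points --- and concentrates the perturbation argument in $(iii)\Rightarrow(ii)$. You instead prove strictness directly in $(ii)\Rightarrow(iii)$ from Item (ii) of Lemma \ref{lemma:Lambdastrict}: on the localized ball $U(x;\delta)$ with $\delta<\frac{1}{2}\min\{|x_i|: i\in\Lambda\}$, any feasible competitor of cardinality $s_k$ has support exactly $\Lambda$ and is excluded by the uniqueness of the least-squares minimizer on that support. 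This is more work than the paper's route but is self-contained and does not pass through finiteness. One small point to tighten: Lemma \ref{lemma:Lambdastrict} is stated for $x\in\Omega_k$, not for a general $x\in\widehat\Omega(\sigma)$, so before invoking it in $(ii)\Rightarrow(iii)$ you should record that $(ii)$ already forces $\sigma=\rho_k$ (elements of $\Omega_k\subseteq\widehat\Omega(\sigma)$ have residual $\rho_k$, which under $(ii)$ must equal $\sigma$), whence $\widehat\Omega(\sigma)=\Omega_k$ by Item (ii) of Proposition \ref{prop:partialEauvalence} and the lemma applies to every minimizer. With that one line added every step checks out: your $(iii)\Rightarrow(iv)$ and $(iv)\Rightarrow(i)$ reuse the same perturbation construction as the paper's $(iii)\Rightarrow(ii)$ (borrowed from the proof of Item (iii) of Proposition \ref{prop:partialEauvalence}), and your $(i)\Rightarrow(ii)$ plays the role of the paper's $(i)\Rightarrow(iv)$, both resting on $\widehat\Omega(\sigma)=\Omega_k$ together with, respectively, the definition of $\Omega_k$ and Corollary \ref{Coro:cardiP2}.
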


\begin{proof}
We first prove Item (i) implies Item (iv). By Item (ii) of Theorem \ref{thm:staConst}, we have $\widehat\Omega(\sigma)=\Omega_k$ since $\rho_k=\sigma$. Then we have $\sharp \widehat\Omega(\sigma)$ is finite due to Corollary \ref{Coro:cardiP2}. Thus, we get Item (iv).

It is obvious that Item (iv) implies Item (iii).

We next prove Item (iii) implies Item (ii) by contradiction.
Suppose there exists $x^*\in\widehat\Omega(\sigma)$ such that $\|Ax^*-b\|_2<\sigma$. We will show $x^*$ is not a strict minimizer of problem \eqref{prob:constrain}.  Let $\Lambda:=\mathrm{supp}(x^*)$. Then there exists $\delta_1>0$ such that for any $x\in U(x^*; \delta_1)$ there holds $\|Ax-b\|_2< \sigma$. Let $\delta_2: =\frac{1}{2}\{|x_i^*|: i\in\Lambda\}$. Set $\delta^*:=\min\{\delta_1, \delta_2\}$. Then for any $x\in U(x^*; \delta^*)$ we have $x_i\neq 0$ for $i\in\Lambda$ and $\|Ax-b\|_2<\sigma$.  Thus, $x\in \widehat\Omega(\sigma)$  for any $x\in U(x^*, \delta^*)\cap \{x: \mathrm{supp}(x)=\Lambda\}$. Clearly, $\sharp(U(x^*; \delta^*)\cap \{x: \mathrm{supp}(x)=\Lambda\})=+\infty$. Then, for any $\delta>0$, there exists $x\in U(x^*, \delta)\cap G_\sigma$, such that $\|x\|_0=\|x^*\|_0$, contradicting the fact that $x^*$ is a strict minimizer of problem \eqref{prob:constrain}. Therefore, we obtain that Item (iii) implies Item (ii).

We finally show that Item (ii) implies Item (i). We also prove it by contradiction. Suppose Item (i) doest not hold. Then by Item (i) of Proposition \ref{prop:partialEauvalence}, there exists $\rho_j<\sigma<\rho_{j+1}$, where $j\in \mathbb{N}_{L-1}^0$ and $\Omega_j\subset\widehat\Omega(\sigma)$ due to Item (iii) of  Proposition \ref{prop:partialEauvalence}. By Item (ii), $\|Ax-b\|_2=\sigma>\rho_j$ for any $x\in\Omega_j$, contradicting the fact that $\|Ax-b\|_2=\rho_j$ due to the definition of $\Omega_j$. Thus we get Item (ii) implies Item (i). Then we complete the proof.
\end{proof}

The next corollary follows from Theorem \ref{thm:strictP2}.
\begin{corollary}
For  $\phi:\mathbb{R}_+\rightarrow\mathbb{R}$ defined at $z\ge 0$ as $\phi(z)=z$ and $p=2$,  let $L$ and $s_i, \rho_i, \Omega_i$ for $i\in\mathbb{N}_{L}^0$ be defined by Definition \ref{def:sRhoOmega}. For $\sigma\in(\rho_k, \rho_{k+1})$, $k\in\mathbb{N}_{L-1}^0$, the following hold:
\begin{itemize}
\item [(i)] There exits $x\in\widehat \Omega(\sigma)$ such that $x$ is not a strict minimizer of problem \eqref{prob:constrain}.
\item [(ii)] $\sharp\widehat \Omega(\sigma)=+\infty$.
\end{itemize}
\end{corollary}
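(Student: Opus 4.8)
The plan is to derive both items directly from Theorem \ref{thm:strictP2} by a contrapositive argument, since the hypothesis $\sigma\in(\rho_k,\rho_{k+1})$ is precisely designed to falsify Item (i) of that theorem. The first step is to confirm that Theorem \ref{thm:strictP2} actually applies, i.e.\ that $\sigma\in[\sigma^*,\|b\|_2)$. With $\phi(z)=z$ one has $\rho_0=\min\{\|Ax-b\|_2:x\in\mathbb{R}^n\}=\sigma^*$ and $\rho_L=\phi(\|b\|_2)=\|b\|_2$, so from $\rho_k<\sigma<\rho_{k+1}$ with $k\in\mathbb{N}_{L-1}^0$ we get $\sigma>\rho_k\ge\rho_0=\sigma^*$ and $\sigma<\rho_{k+1}\le\rho_L=\|b\|_2$; hence $\sigma$ lies in the admissible range.

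Next I would record that, by Item (iii) of Proposition \ref{prop:rhoSOmega}, the values satisfy $0\le\rho_0<\rho_1<\dots<\rho_L$, so they are pairwise distinct and strictly ordered. Consequently, because $\sigma$ lies strictly between $\rho_k$ and $\rho_{k+1}$, there is no index $j\in\mathbb{N}_{L-1}^0$ with $\rho_j=\sigma$. In other words, Item (i) of Theorem \ref{thm:strictP2} is \emph{false} under the present hypothesis.

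Since Theorem \ref{thm:strictP2} asserts that its four statements are equivalent, the failure of Item (i) forces the failure of Items (iii) and (iv) as well. The negation of Item (iii) of the theorem (``every $x\in\widehat\Omega(\sigma)$ is a strict minimizer'') is exactly the assertion that there exists some $x\in\widehat\Omega(\sigma)$ which is not a strict minimizer of problem \eqref{prob:constrain}, which is Item (i) of the corollary. Likewise, the negation of Item (iv) (``$\sharp\widehat\Omega(\sigma)$ is finite'') gives $\sharp\widehat\Omega(\sigma)=+\infty$, which is Item (ii) of the corollary.

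I do not expect a genuine obstacle here, as the corollary is a direct logical consequence of the established equivalence; the only points requiring care are the bookkeeping ones noted above, namely verifying the range condition $\sigma\in[\sigma^*,\|b\|_2)$ so the theorem is applicable, and correctly forming the negations of Items (iii) and (iv) so that they match the two claims of the corollary verbatim.
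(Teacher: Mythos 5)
Your proposal is correct and follows exactly the route the paper intends: the paper simply states that the corollary ``follows from Theorem \ref{thm:strictP2},'' and your contrapositive argument---checking $\sigma\in[\sigma^*,\|b\|_2)$ via $\rho_0=\sigma^*$ and $\rho_L=\|b\|_2$, noting the strict ordering of the $\rho_i$ rules out Item (i) of the theorem, and then negating Items (iii) and (iv)---is precisely the omitted derivation, carried out with appropriate care.
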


\subsection{Cardinality of optimal solution set of problem \eqref{prob:penalty} when $p=2$}
In this subsection, we discuss cardinality of optimal solution set of problem \eqref{prob:penalty} when $p=2$.

We state the main results in the following theorem.
\begin{theorem}\label{thm:penaCarp2}
For $\phi$ satisfying H\ref{hypo:1} and $p=2$,  let $L$ and $s_i, \rho_i, \Omega_i$ for $i\in\mathbb{N}_{L}^0$ be defined by Definition \ref{def:sRhoOmega}. Let  $K$, $\{\Lambda_i\subseteq \mathbb{N}_{L}^0: i=-1,0,\dots, K-1\}$ and $t_i, \lambda_i$ for $i\in\mathbb{N}_{K}^0$ be  defined by  Definition \ref{def:lambda_i}.
Let $\lambda_{-1}:=+\infty$, $S_i:=\{z:\phi(z)=\rho_{i}\}$ for $i \in\mathbb{N}_{L}^0$ and $\Omega$ defined by \eqref{def:Omega}.
Then  for $k\in\mathbb{N}_K^0$, the following statements hold:
\begin{itemize}
\item [(i)] If $\sharp S_{t_k}=1$, then   for any $\lambda\in (\lambda_k, \lambda_{k-1})$, $\sharp \Omega(\lambda)$ is finite and any $x\in\Omega(\lambda)$ is a strict minimizer of problem \eqref{prob:penalty}.
\item [(ii)] If $k\neq K$ and $\sharp S_j=1$ for all $j\in\Lambda_k\cup\{t_k\}$, then  $\sharp \Omega(\lambda_k)$ is finite and any $x\in\Omega(\lambda_k)$ is a strict minimizer of problem \eqref{prob:penalty}.
\end{itemize}
\end{theorem}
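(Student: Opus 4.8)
The plan is to reduce both items to the explicit description of the optimal solution set $\Omega(\lambda)$ furnished by Item (ii) of Theorem \ref{thm:stability}, combine it with the cardinality bound of Proposition \ref{prop:cardiP2} to deduce finiteness, and then exploit the observation that \emph{finiteness of the optimal solution set alone forces every optimal solution to be strict}. This last point is the conceptual core and I would isolate it once at the start: if $\Omega(\lambda)$ is a finite subset of $\mathbb{R}^n$, then each of its points is isolated, so for any $\hat x\in\Omega(\lambda)$ one may choose $\delta>0$ with $U(\hat x;\delta)\cap\Omega(\lambda)=\{\hat x\}$. Every $x'\in U(\hat x;\delta)$ with $x'\neq\hat x$ then lies outside the optimal solution set, hence is not a global minimizer, so its objective value strictly exceeds the optimal value $F(\lambda)$, which is exactly the objective value attained at $\hat x$. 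By the definition of a strict minimizer, $\hat x$ is therefore a strict minimizer of problem \eqref{prob:penalty}.

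For Item (i), I would first record that for $k\in\mathbb{N}_K^0$ and $\lambda\in(\lambda_k,\lambda_{k-1})$, Item (ii) of Theorem \ref{thm:stability} gives $\Omega(\lambda)=\Omega_{t_k}$ uniformly across its three regimes: the case $k=0$ uses $\lambda_{-1}=+\infty$ together with $t_0=0$, the case $1\le k\le K-1$ is immediate, and the case $k=K$ uses $t_K=L$ together with $\Omega_L=\{\mathbf{0}_n\}$. Since $\sharp S_{t_k}=1$, Proposition \ref{prop:cardiP2} applied with the index $t_k$ yields $\sharp\Omega_{t_k}\le C_n^{s_{t_k}}$, so $\Omega(\lambda)$ is finite; strictness then follows from the observation above.

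For Item (ii), with $k\neq K$ (so that $k\in\mathbb{N}_{K-1}^0$ and $\lambda_k>0$), Item (ii) of Theorem \ref{thm:stability} supplies the representation $\Omega(\lambda_k)=\bigcup_{j\in\Lambda_k}\Omega_j\cup\Omega_{t_k}$. Because $\sharp S_j=1$ for every $j\in\Lambda_k\cup\{t_k\}$, Proposition \ref{prop:cardiP2} makes each of the finitely many sets $\Omega_j$ finite, whence their union $\Omega(\lambda_k)$ is finite, and strictness is again inherited from finiteness via the opening observation.

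The argument involves no hard estimate; the only point demanding care is the bookkeeping between the running index $k\in\mathbb{N}_K^0$ of this theorem and the case split of Theorem \ref{thm:stability}, in particular recognizing that $\Omega(\lambda)$ collapses to the single set $\Omega_{t_k}$ on each open interval $(\lambda_k,\lambda_{k-1})$ through the identities $t_0=0$ and $t_K=L$. I would present the ``finiteness implies strictness'' step once and invoke it in both items rather than repeating it.
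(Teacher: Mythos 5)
Your proposal is correct and follows essentially the same route as the paper: invoke Item (ii) of Theorem \ref{thm:stability} to identify $\Omega(\lambda)$ with $\Omega_{t_k}$ (or with $\bigcup_{j\in\Lambda_k}\Omega_j\cup\Omega_{t_k}$ at $\lambda=\lambda_k$) and then apply Proposition \ref{prop:cardiP2} to each index to obtain finiteness. The only difference is that you explicitly isolate and justify the ``finiteness implies every optimal solution is a strict minimizer'' step, which the paper leaves implicit; your argument for it (isolation within the finite optimal solution set, so every nearby point has objective value strictly above $F(\lambda)$) is sound.
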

\begin{proof}
We first prove Item (i). Since $\sharp S_{t_k}=1$,  $\sharp \Omega_{t_k}$ is finite. By (ii) of Theorem \ref{thm:stability}, we have $\Omega(\lambda)=\Omega_{t_k}$ for any $\lambda\in(\lambda_k, \lambda_{k-1})$. Then we get Item (i).

We then prove Item (ii). Since $\sharp S_j=1$ for all $j\in\Lambda_k\cup \{t_k\}$, $\sharp \Omega_{j}$ is finite for all $j\in\Lambda_k\cup \{t_k\}$. Then, by Item (ii) of Theorem \ref{thm:stability}, $\sharp \Omega(\lambda_k)$ is finite. Then we get Item (ii) of this theorem.
\end{proof}

A direct and useful consequence of Theorem \ref{thm:penaCarp2} is presented below.
\begin{corollary}\label{coro:penaCarp2}
 Let  $\phi$ satisfy H1 and $p=2$. If $\phi$ is strictly increasing, then $\sharp\Omega(\lambda)$ is finite and every $x\in\Omega(\lambda)$ is a strict minimizer of problem \eqref{prob:penalty} for any $\lambda>0$.
\end{corollary}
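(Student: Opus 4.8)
The plan is to reduce the corollary to the two items of Theorem \ref{thm:penaCarp2} by observing that strict monotonicity of $\phi$ makes all of their cardinality hypotheses automatic. First I would note that each $\rho_i$ lies in the range of $\phi$, being of the form $\phi(\|Ax-b\|_2)$ for an appropriate $x$ by Definition \ref{def:sRhoOmega}. Since $\phi$ is strictly increasing, it is injective on $\mathbb{R}_+$, so the level set $S_i=\{z:\phi(z)=\rho_i\}$ is a singleton; that is, $\sharp S_i=1$ for every $i\in\mathbb{N}_L^0$. Consequently the hypotheses ``$\sharp S_{t_k}=1$'' appearing in Item (i) and ``$\sharp S_j=1$ for all $j\in\Lambda_k\cup\{t_k\}$'' appearing in Item (ii) of Theorem \ref{thm:penaCarp2} hold for every admissible index $k$, with no further assumption needed.

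Next I would dispose of the degenerate case $L=0$ separately: here Remark \ref{remark:L=0} gives $\Omega(\lambda)=\{\mathbf{0}_n\}$ for all $\lambda>0$, which is obviously finite and whose single element is trivially a strict minimizer. For $L\ge 1$ (so that $K\ge 1$ by Proposition \ref{prop:lambda_i}), the strategy is a clean case split of the parameter half-line according to the partition induced by Definition \ref{def:lambda_i}. Recalling the ordering $\lambda_{-1}=+\infty>\lambda_0>\cdots>\lambda_{K-1}>\lambda_K=0$, every $\lambda>0$ lies either in one of the open intervals $(\lambda_k,\lambda_{k-1})$ for some $k\in\mathbb{N}_K^0$, or equals one of the breakpoints $\lambda_k$ for some $k\in\mathbb{N}_{K-1}^0$ (the endpoint $\lambda_K=0$ being excluded because $\lambda>0$). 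On an open interval $(\lambda_k,\lambda_{k-1})$ I would invoke Item (i) of Theorem \ref{thm:penaCarp2}, and at a breakpoint $\lambda=\lambda_k$, where necessarily $k\neq K$, I would invoke Item (ii); in both cases the required hypotheses hold by the first step, yielding finiteness of $\sharp\Omega(\lambda)$ and strictness of every minimizer.

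The only real bookkeeping to get right --- and the step I expect to need the most care --- is verifying that this case split is genuinely exhaustive and that the index ranges line up with those in Theorem \ref{thm:penaCarp2}. In particular I would confirm that the breakpoints correspond exactly to $k\in\mathbb{N}_{K-1}^0$, so that the side condition $k\neq K$ of Item (ii) is always met, and that the open intervals and the breakpoints together cover $(0,+\infty)$ with no gap. No analytic difficulty arises beyond this verification; once the covering is confirmed, the conclusion follows immediately from the two items of Theorem \ref{thm:penaCarp2}.
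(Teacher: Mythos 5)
Your proposal is correct and is exactly the argument the paper intends: the paper states the corollary as a direct consequence of Theorem \ref{thm:penaCarp2}, and your observation that strict monotonicity of $\phi$ forces $\sharp S_i=1$ for every $i$ (since each $\rho_i$ is attained), together with the exhaustive split of $(0,+\infty)$ into the open intervals and the breakpoints $\lambda_0,\dots,\lambda_{K-1}$, is precisely how that consequence is drawn. Your separate treatment of the $L=0$ case via Remark \ref{remark:L=0} is a harmless extra check; no gaps.
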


With the help of the above corollary, we characterize the cardinality of optimal solution set of problem \eqref{prob:l2-unconstrain} in the next proposition. Our result coincides with that obtained in \cite{Nikolova:2013}.
\begin{proposition}
The cardinality of the optimal solution set of problem \eqref{prob:l2-unconstrain} is finite for any $\lambda>0$ and every optimal solution is a strict minimizer.
\end{proposition}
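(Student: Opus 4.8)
The plan is to recognize problem \eqref{prob:l2-unconstrain} as a particular instance of the general penalty problem \eqref{prob:penalty} and then to invoke Corollary \ref{coro:penaCarp2}. Indeed, as already noted in the introduction, setting $p=2$ and choosing the scalar penalty function $\phi(z):=\frac{z^2}{2}$ for $z\ge 0$ reduces \eqref{prob:penalty} to \eqref{prob:l2-unconstrain}. Consequently the optimal solution set of \eqref{prob:l2-unconstrain} is exactly $\Omega(\lambda)$, the set-valued map defined in \eqref{def:Omega} associated with this choice of $\phi$ and $p$. All that remains is to check that this $\phi$ meets the hypotheses of Corollary \ref{coro:penaCarp2}.

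First I would verify that $\phi(z)=\frac{z^2}{2}$ satisfies H\ref{hypo:1}: it is continuous on $\mathbb{R}_+$, it satisfies $\phi(0)=0$, and it takes values in $\mathbb{R}_+$ since $z^2/2\ge 0$ for all $z\ge 0$. Next I would confirm that $\phi$ is strictly increasing on $\mathbb{R}_+$: for any $0\le z_1<z_2$ we have $\frac{z_1^2}{2}<\frac{z_2^2}{2}$, which in particular makes $\phi$ nondecreasing as demanded by H\ref{hypo:1}. Thus both the blanket assumption H\ref{hypo:1} and the additional strict-monotonicity hypothesis of Corollary \ref{coro:penaCarp2} hold, with $p=2$.

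With the hypotheses in place, Corollary \ref{coro:penaCarp2} applies directly and yields that $\sharp\Omega(\lambda)$ is finite for every $\lambda>0$ and that every $x\in\Omega(\lambda)$ is a strict minimizer of problem \eqref{prob:penalty}; translating back through the identification of $\Omega(\lambda)$ with $\arg\min\{\|x\|_0+\frac{\lambda}{2}\|Ax-b\|_2^2:x\in\mathbb{R}^n\}$ gives precisely the assertion of the proposition. I expect essentially no obstacle at this stage: the substantive work has already been carried out upstream, where the finiteness bound $\sharp\Omega_k\le C_n^{s_k}$ of Proposition \ref{prop:cardiP2} (which rests on the uniqueness of the least-squares fit on a fixed support from Item (ii) of Lemma \ref{lemma:Lambdastrict}) is combined with the piecewise-constant description of $\Omega(\lambda)$ in Item (ii) of Theorem \ref{thm:stability}. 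The only thing that could in principle fail here is the strict-monotonicity requirement, and the square function plainly satisfies it, so the argument is a routine specialization rather than a genuinely new computation.
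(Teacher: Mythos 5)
Your proposal is correct and follows exactly the paper's own argument: identify \eqref{prob:l2-unconstrain} as \eqref{prob:penalty} with $p=2$ and $\phi(z)=\frac{z^2}{2}$, note that this $\phi$ is strictly increasing and satisfies H\ref{hypo:1}, and apply Corollary \ref{coro:penaCarp2}. The additional verification steps you spell out are all routine and consistent with what the paper leaves implicit.
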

\begin{proof}
Problem \eqref{prob:l2-unconstrain} is a special case of problem \eqref{prob:penalty} with $\phi(z):=\frac{z^2}{2}$ for  $z\ge 0$. Since in this case $\phi$ is strictly increasing, we obtain the conclusions by Corollary \ref{coro:penaCarp2}.
\end{proof}

\subsection{Cardinality of optimal solution sets of problems \eqref{prob:constrain} and \eqref{prob:penalty} when $p=1$}
This subsection provides results on cardinality of optimal solution sets of problems \eqref{prob:constrain} and \eqref{prob:penalty} when $p=1$.

\begin{theorem}\label{thm:ProbCarp1}
For $\phi$ satisfying H\ref{hypo:1} and $p=1$, let $L$ and $s_i, \rho_i, \Omega_i$ for $i\in\mathbb{N}_{L}^0$ be defined by Definition \ref{def:sRhoOmega}. Let  $K$, $\{\Lambda_i\subseteq \mathbb{N}_{L}^0: i=-1,0,\dots, K-1\}$ and $t_i, \lambda_i$ for $i\in\mathbb{N}_{K}^0$ be  defined by  Definition \ref{def:lambda_i}. Let $\lambda_{-1}:=+\infty$. Let $\Omega$ and $\widehat\Omega$ be defined by \eqref{def:Omega} and \eqref{def:omegaSharp} respectively. Then  the following statements hold:
\begin{itemize}
\item [(i)] $\widehat \Omega(\sigma)=\Omega(\lambda)=\{\mathbf{0}_n\}$ for $\sigma\ge \|b\|_1$ and $0<\lambda<\lambda_{K-1}$.
\item [(ii)] For $k=K-1$ with $s_{t_k}\ge2$ or $k\le K-2$, if there exists $x^*\in \Omega_{t_k}$ such that $\|Ax^*-b\|_0\ge m+2-s_{t_k}$, then we have $\sharp \Omega(\lambda)=+\infty$  for any $\lambda\in[\lambda_k, \lambda_{k-1})$ and $x^*$ is not a strict minimizer of problem \eqref{prob:penalty}.
\item [(iii)]  Suppose $\phi(z)=z$ for $z\ge 0$, then $\sharp \widehat\Omega(\sigma)=+\infty$ for all $\sigma\in(\rho_k, \rho_{k+1})$, $k\in\mathbb{N}_{L-1}^0$; further, for $\sigma=\rho_k$ and $k$ satisfying $k=K-1$ with $s_{k}\ge2$ or $k\le K-2$, if there exists $x^*\in \Omega_{k}$ such that $\|Ax^*-b\|_0\ge m+2-s_{k}$, then we have  $\sharp\widehat\Omega(\sigma)=+\infty$, and $x^*$ is not a strict minimizer of problem \eqref{prob:constrain}.
\end{itemize}
\end{theorem}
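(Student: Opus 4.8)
The plan is to dispatch the three items in turn, relying on the stability descriptions of Section~\ref{sec:sta} together with the cardinality estimate for $\Omega_k$ in Proposition~\ref{prop:cardiP1}. For Item (i), I would read off both equalities directly: when $\sigma\ge\|b\|_1$ the third branch of Item (ii) of Theorem~\ref{thm:staConst} gives $\widehat\Omega(\sigma)=\{\mathbf{0}_n\}$, and when $0<\lambda<\lambda_{K-1}$ the first branch of Item (ii) of Theorem~\ref{thm:stability} gives $\Omega(\lambda)=\Omega_L=\{\mathbf{0}_n\}$, so nothing further is needed.

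For Item (ii) the idea is to exhibit $\Omega_{t_k}$ as a subset of the penalty optimal set and then invoke Proposition~\ref{prop:cardiP1}(iii). From Item (ii) of Theorem~\ref{thm:stability}, for $\lambda\in(\lambda_k,\lambda_{k-1})$ one has $\Omega(\lambda)=\Omega_{t_k}$, while at $\lambda=\lambda_k$ one has $\Omega(\lambda_k)=\bigcup_{j\in\Lambda_k}\Omega_j\cup\Omega_{t_k}$; in both cases $\Omega_{t_k}\subseteq\Omega(\lambda)$, so it suffices to prove $\sharp\Omega_{t_k}=+\infty$. To apply Proposition~\ref{prop:cardiP1}(iii) to $\Omega_{t_k}$ I must verify that either $t_k=L-1$ with $s_{t_k}\ge 2$, or $t_k\le L-2$. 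Here I would use the strict monotonicity $t_0<t_1<\dots<t_K=L$ from Item (ii) of Proposition~\ref{prop:lambda_i}: if $k\le K-2$ then $t_k<t_{K-1}<t_K=L$ forces $t_k\le L-2$; if $k=K-1$ and $s_{t_k}\ge 2$, then either $t_k=L-1$ (first alternative) or $t_k\le L-2$ (second alternative). Either way the hypothesis of Proposition~\ref{prop:cardiP1}(iii) holds, so the assumed $x^*\in\Omega_{t_k}$ with $\|Ax^*-b\|_0\ge m+2-s_{t_k}$ produces, for every $\delta>0$, a point $x^*\neq x'\in U(x^*;\delta)\cap\Omega_{t_k}$. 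This yields $\sharp\Omega_{t_k}=+\infty$, hence $\sharp\Omega(\lambda)=+\infty$; and since each such $x'$ lies in $\Omega(\lambda)$ and therefore attains the optimal value, $x^*$ cannot be a strict minimizer.

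For Item (iii), with $\phi(z)=z$ strictly increasing, I would split on $\sigma$. For $\sigma\in(\rho_k,\rho_{k+1})$ I use a perturbation-within-support argument: choose $x\in\Omega_k$ (nonempty by Proposition~\ref{prop:rhoSOmega}(iv)), note $s_k\ge 1$ because $k\le L-1$ and the $s_i$ are strictly decreasing, and observe that $\|Ax-b\|_1=\rho_k<\sigma$ is strict. Taking $\delta$ smaller than both the radius keeping $\|A\cdot-b\|_1<\sigma$ and half the least nonzero magnitude of $x$, every $x'$ with $\mathrm{supp}(x')=\mathrm{supp}(x)$ and $\|x'-x\|_2\le\delta$ satisfies $x'\in G_\sigma$ and $\|x'\|_0=s_k=H(\sigma)$, hence $x'\in\widehat\Omega(\sigma)$; this is a continuum, giving $\sharp\widehat\Omega(\sigma)=+\infty$. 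For $\sigma=\rho_k$, Item (ii) of Theorem~\ref{thm:staConst} gives $\widehat\Omega(\rho_k)=\Omega_k$, and the same index check as above (now using $K\le L$ from Proposition~\ref{prop:lambda_i}(i) to convert the stated $K$-condition into the $L$-condition of Proposition~\ref{prop:cardiP1}(iii)) lets me apply Proposition~\ref{prop:cardiP1}(iii) to $\Omega_k$; the resulting nearby points $x'\in\Omega_k=\widehat\Omega(\rho_k)$ give $\sharp\widehat\Omega(\rho_k)=+\infty$ and show that $x^*$ is not a strict minimizer of problem \eqref{prob:constrain}.

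The analytic content is light, being confined to the continuum construction for open $\sigma$-intervals and to the already-established Proposition~\ref{prop:cardiP1}(iii). \emph{The main obstacle is purely the index bookkeeping} that converts hypotheses phrased in terms of $K$ and $t_k$ into the form required by Proposition~\ref{prop:cardiP1}(iii), which is stated in terms of $L$; the monotonicity $t_0<\dots<t_K=L$ together with $K\le L$ are exactly what make this translation go through, and the boundary case $k=K-1$—where the extra requirement $s_{t_k}\ge 2$ is genuinely needed only when $t_k=L-1$—is the one to handle with care.
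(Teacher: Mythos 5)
Your proposal is correct and follows essentially the same route as the paper: Item (i) is read off from Theorems \ref{thm:staConst} and \ref{thm:stability}, Item (ii) combines the inclusion $\Omega_{t_k}\subseteq\Omega(\lambda)$ from Theorem \ref{thm:stability} with Proposition \ref{prop:cardiP1}(iii), and Item (iii) uses the perturbation-within-support argument of Theorem \ref{thm:strictP2} for $\sigma\in(\rho_k,\rho_{k+1})$ and the identification $\widehat\Omega(\rho_k)=\Omega_k$ plus Proposition \ref{prop:cardiP1}(iii) for $\sigma=\rho_k$. Your explicit translation of the $K$-indexed hypotheses into the $L$-indexed form required by Proposition \ref{prop:cardiP1}(iii), via $t_0<\dots<t_K=L$ and $K\le L$, is a detail the paper leaves implicit but is carried out correctly.
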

\begin{proof}
By Item (ii) of Theorem \ref{thm:stability} and the definition of $\Omega_L$, Item (i) follows immediately. By Theorem \ref{thm:stability}, $\Omega_{t_k}\subseteq\Omega(\lambda)$ for $\lambda\in[\lambda_k, \lambda_{k-1})$. Then we have Item (ii) due to Item (iii) of Proposition \ref{prop:cardiP1}.
For $\sigma\in(\rho_k, \rho_{k+1})$, $k\in\mathbb{N}_{L-1}^0$, from Item (ii) of Theorem \ref{thm:staConst}, $\Omega_{k}\subseteq\widehat\Omega(\sigma)$.
Then any $x\in\Omega_{k}\subseteq \widehat\Omega(\sigma)$ satisfies $\|Ax-b\|_1=\rho_k<\sigma$. Let $x^*\in\Omega_k$. Similar to the proofs of Theorem \ref{thm:strictP2}, for any $\delta>0$, there exits $y\neq x^*$ such that $\|Ay-b\|_1\le \sigma$ and $\supp (y)=\supp (x^*)$. Thus, $y\in \widehat\Omega(\sigma)$. Therefore, $\sharp\widehat\Omega(\sigma)=+\infty$ for $\sigma\in(\rho_k, \rho_{k+1})$. For $\sigma=\rho_k$, we have $\Omega_k=\widehat\Omega(\sigma)$. Then by Item (iii) of Proposition \ref{prop:cardiP1} we obtain Item (iii) of this theorem.
\end{proof}
\section{Numerical illustrations}\label{sec:exp}
In this section, we provide numerical illustrations for some of our main theoretical findings, including:
\begin{itemize}
\item the marginal function of problem \eqref{prob:l2-constrain} is piecewise constant, while that of problem \eqref{prob:l2-unconstrain} is piecewise linear;
\item it is possible that the least square penalty problem \eqref{prob:l2-unconstrain} has no common optimal solutions with the constrained problem  \eqref{prob:l2-constrain} for any $\lambda>0$;
\item there exists $\lambda^*>0$ such that problem \eqref{prob:l2-env}  is an exact penalty formulation of the constrained problem \eqref{prob:l2-constrain} for $\lambda>\lambda^*$.
\end{itemize}
All the $\ell_0$ optimization problems involved  are solved by an exhaustive combinational search.
For better readability, all numerical digits are round to four decimal places.

We present in detail a concrete example for sparse signal recovery with $(m,n)=(4,5)$ as follows
\begin{eqnarray}
\label{eq:A}
A=\left [
\begin{array}{ccccc}
1&7&7&7&9\\
2&1&3&7&2\\
3&3&3&9&3\\
3&4&9&8&9
\end{array}
\right],\\
\label{eq:x}
x^*=\left [
\begin{array}{ccccc}
0 & 1 & 1 &0 & 0\\
\end{array}
\right]^\top,\\
\label{eq:b}
b=Ax+\eta=
\left[\begin{array}{ccccc}
14.43 & 7.21 & 4.49 & 13.02
\end{array}\right]^\top,
\end{eqnarray}
where $\eta\in\mathbb{R}^4$ denotes random noise following a nearly normal distribution. The coefficients of $A$, $x^*$, and $b$ in \eqref{eq:A},\eqref{eq:x}, and \eqref{eq:b} are exact. The noise level of $\eta$ is $\|Ax^*-b\|_2=3.5734$. As $x^*$ is sparse, we naturally expect recovering $x^*$ by solving problem \eqref{prob:l2-constrain} with $A$, $b$ in \eqref{eq:A}, \eqref{eq:b} and $\sigma=3.6$. To this end, we shall first verify that $x^*$ is an optimal solution to problem \eqref{prob:l2-constrain}. Let $\phi(z)=z$, $z\geq 0$ and $p=2$. Then according to Definition \ref{def:sRhoOmega}, we have that $L=\mathrm{rank}(A)=4$,
\begin{equation}\label{eq:s}
s_0=4, s_1=3, s_2=2, s_3=1, s_4=0,
\end{equation}
and
\begin{equation}\label{eq:rho}
\rho_0=0, \rho_1=1.4487, \rho_2=3.3363, \rho_3=4.0502,\rho_4=21.2106.
\end{equation}
By Theorem \ref{thm:staConst}, the marginal function of problem \eqref{prob:l2-constrain}  has the form of
$$
H(\sigma)=\begin{cases}
4,&\mathrm{~~if~~}0\le \sigma<1.4487,\\
3,&\mathrm{~~if~~}1.4487\le \sigma<3.3363,\\
2,&\mathrm{~~if~~}3.3363\le \sigma<4.0502,\\
1,&\mathrm{~~if~~}4.0502\le \sigma<21.2106,\\
0,&\mathrm{~~if~~}\sigma\ge 21.2106.
\end{cases}
$$
Specially, we find that $H(\sigma)=2$ when $ 3.3363\le\sigma < 4.0502$. This together with $\|x^*\|=2$ implies that $x^*$ is an optimal solution to problem \eqref{prob:l2-constrain} with $\sigma=3.6$.

Next we consider the least square penalty problem \eqref{prob:l2-unconstrain} with $A$, $b$ in \eqref{eq:A} and \eqref{eq:b}. By Remark \ref{remark:same}, in this case the marginal function of problem \eqref{prob:l2-unconstrain} is
\begin{equation}\label{eq:marginf}
F(\lambda)=\min\{s_i+\frac{\rho_i^2}{2}\lambda:i\in\mathbb{N}^0_L\},
\end{equation}
where $L=4$, $s_i$, $\rho_i$, $i\in\mathbb{N}^0_L$ are given in \eqref{eq:s} and \eqref{eq:rho}.
Then, $K$, $t_i, \lambda_i$ for $i\in\mathbb{N}_K^0$, and  $\Lambda_i$ for $i\in\{-1,0,\dots, K-1\}$ are produced by the iterative procedure in Definition \ref{def:lambda_i}. We obtain that $K=3$,
$$
t_0=0, t_1=1, t_2=3, t_3=4,
$$
$$
\lambda_0=0.9530, \lambda_1=0.2796, \lambda_2=0.0046, \lambda_3=0,
$$
$$
\Lambda_{-1}=\{0\}, \Lambda_0=\{1\}, \Lambda_1=\{3\}, \Lambda_2=\{4\}.
$$
The plots of $f_i(\lambda):=s_i+\frac{\rho_i^2}{2}\lambda$ for $i\in\mathbb{N}_4^0$ and their intersection points are depicted in Figure \ref{fig:ex2}.
\begin{figure}
\centering
\scalebox{0.8}{\includegraphics{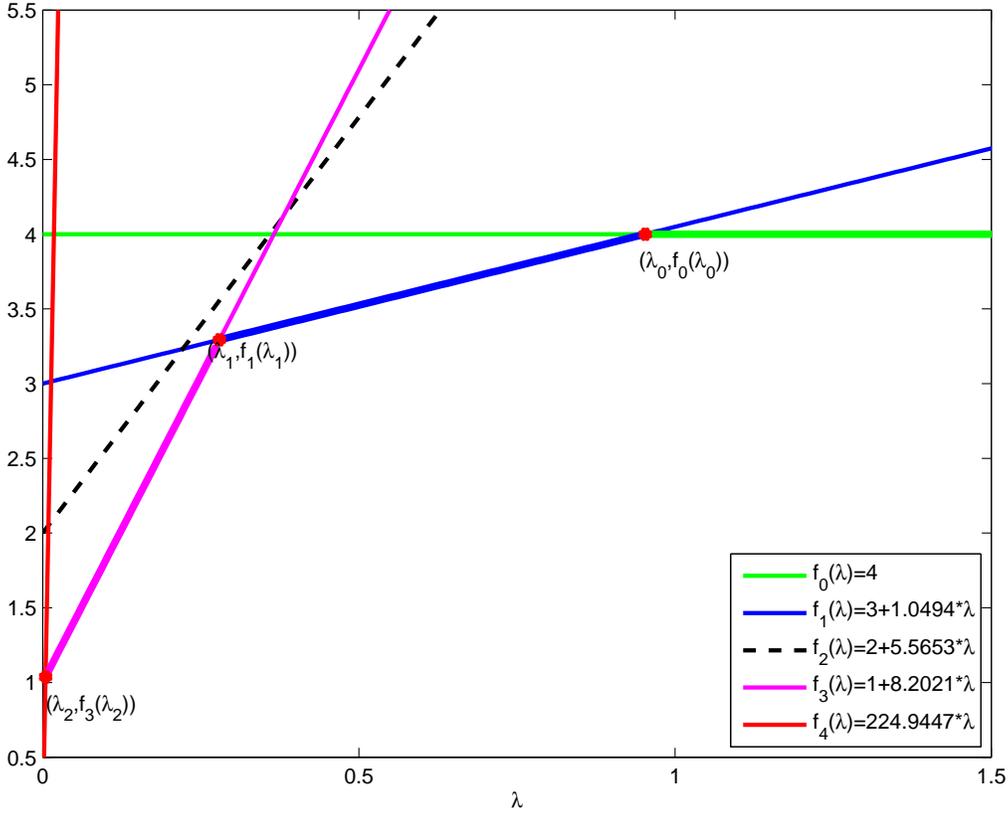}}
\caption{Plots of $f_i(\lambda):=s_i+\frac{\rho_i^2}{2}\lambda$ for $i\in\mathbb{N}_4^0$ in \eqref{eq:marginf}. The line in bold represents $F(\lambda)$ in \eqref{eq:marginf}. }
\label{fig:ex2}
\end{figure}
By Theorem \ref{thm:stability}, $F(\lambda)$ in \eqref{eq:marginf} reads as
$$
F(\lambda)=\begin{cases}
224.9447\lambda,&\mathrm{~~if~~}0< \lambda<0.0046,\\
1+8.2021\lambda,&\mathrm{~~if~~}0.0046\le \lambda<0.2796,\\
3+1.0494\lambda,&\mathrm{~~if~~}0.2796\le \lambda<0.9530,\\
4,&\mathrm{~~if~~}\lambda\ge 0.9530.
\end{cases}
$$
We note that $F(\lambda)\neq  s_2+\frac{\rho^2_2}{2}\lambda$ for any $\lambda>0$. By Theorem \ref{thm:stability}, we deduce that $\|x\|_0\neq 2$ for any $x\in\Omega(\lambda)$ and any $\lambda>0$. Therefore, given $A$, $b$ in \eqref{eq:A}, \eqref{eq:b}, problem \eqref{prob:l2-unconstrain} has no common optimal solution as problem \eqref{prob:l2-constrain} with $\sigma=3.6$ for any $\lambda>0$. In other words, problem \eqref{prob:l2-unconstrain} fails to recover or approximate the true signal $x^*$.

In the next numerical test, we discuss penalty problem \eqref{prob:l2-env} with $A$, $b$ in \eqref{eq:A}, \eqref{eq:b} and $\sigma=3.6$. As shown in Section \ref{sec:equal}, problem \eqref{prob:l2-env} has the same optimal solution set with problem \eqref{prob:l2-constrain}, provided that $\lambda>\lambda^*$ for some $\lambda^*>0$. We shall find the exact numerical value of $\lambda^*$. Set $\phi=\phi_2$ as in \eqref{def:phi2}. From the iteration in Definition \ref{def:sRhoOmega}, we have that $\tilde{L}=2$,
\begin{equation}\label{eq:s-2}
\tilde s_0=2, \tilde s_1=1, \tilde s_2=0,
\end{equation}
\begin{equation}\label{eq:rho-2}
\tilde \rho_0=0, \tilde \rho_1=0.1013, \tilde \rho_2=155.0666.
\end{equation}
According to the proof of Theorem \ref{thm:equivalence}, $\lambda^*$ actually equals to $\tilde \lambda_0$ produced by the iteration procedure in Definition \ref{def:lambda_i}. Given $\tilde s_i$, $\tilde \rho_i$ in \eqref{eq:s-2}, \eqref{eq:rho-2}, $i\in\mathbb{N}^0_2$, it holds that
$$
\tilde \lambda_0 = \max\{\frac{\tilde s_0-\tilde s_1}{\tilde \rho_1-\tilde \rho_0},\frac{\tilde s_0 -\tilde s_2}{\tilde \rho_2-\tilde \rho_0}\}=9.8678.
$$
Therefore, we conclude that for $A$, $b$ in \eqref{eq:A}, \eqref{eq:b} and $\sigma=3.6$, penalty problem \eqref{prob:l2-env} share the same optimal solution set as constrained problem \eqref{prob:l2-constrain} when the penalty parameter $\lambda>\lambda^*=9.8678$.

\section{Conclusions}\label{sec:con}
The quadratically constrained $\ell_0$ regularization problem \eqref{prob:l2-constrain} arises in many applications where sparse solutions are desired. The least square penalty method \eqref{prob:l2-unconstrain} has been widely used for solving this problem. There is little results regarding properties of optimal solutions to problems \eqref{prob:l2-constrain} and \eqref{prob:l2-unconstrain}, although these two problems have been well studied mainly from a numerical perspective. In this paper, we aim at investigating optimal solutions of a more general constrained $\ell_0$ regularization problem \eqref{prob:constrain} and the corresponding penalty problem \eqref{prob:penalty}.  We first discuss the optimal solutions of these two problems in detail, including existence, stability with respect to parameter, cardinality and strictness. In particular, we show that the optimal solution set of the penalty problem is piecewise constant with respect to the penalty parameter. Then we clarify the relationship between optimal solutions of the two problems. It is proven that, when the noise level $\sigma>0$ there does not always exist a penalty parameter, such that  problem \eqref{prob:l2-unconstrain} and the constrained $\ell_0$ problem
\eqref{prob:l2-constrain} share a common optimal solution. Under mild conditions on the penalty function, we prove that  problem \eqref{prob:penalty} has the same optimal solution set with
 problem \eqref{prob:constrain} if the penalty parameter is large enough. With the help of the conditions, we further propose exact penalty problems for the constrained problem. We expect our theoretical findings can offer motivations for the design of innovative numerical schemes for solving constrained $\ell_0$ regularization problems.

\section{Appendix: Proof of stability for problem \eqref{prob:penalty} in section \ref{sec:staP}}

\subsection{Proof of Proposition \ref{prop:lambda_i}}
We begin with an auxiliary Lemma.
\begin{lemma}\label{lemma:Lam}
Given $\phi$ satisfying H\ref{hypo:1} and $p\in\{1, 2\}$, let $L$ and $s_i, \rho_i, \Omega_i$ for $i\in\mathbb{N}_{L}^0$ be defined by Definition \ref{def:sRhoOmega}. Let  $K$, $\{\Lambda_i\subseteq \mathbb{N}_{L}^0: i=-1,0,\dots, K-1\}$ and $t_i, \lambda_i$ for $i\in\mathbb{N}_{K}^0$ be  defined by  Definition \ref{def:lambda_i}. Then
 $$\Lambda_i=\arg\min\{s_j+\lambda_i\rho_j: j=t_{i}+1,\dots, L\}.$$
\end{lemma}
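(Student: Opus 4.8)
The plan is to read the ratio $\frac{s_{t_i}-s_j}{\rho_j-\rho_{t_i}}$ appearing in Definition \ref{def:lambda_i} as the abscissa of the intersection point of the two affine functions $f_{t_i}$ and $f_j$, and then to recover the minimizers of $j\mapsto f_j(\lambda_i)$ directly from the geometry of these lines.

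First I would record the elementary facts needed from Proposition \ref{prop:rhoSOmega}. For every index $j\in\{t_i+1,\dots,L\}$ we have $j>t_i$, so by Items (ii) and (iii) of that proposition $s_j<s_{t_i}$ and $\rho_j>\rho_{t_i}$; in particular $\rho_j-\rho_{t_i}>0$, so each ratio $\mu_j:=\frac{s_{t_i}-s_j}{\rho_j-\rho_{t_i}}$ is well defined and strictly positive. Solving $f_{t_i}(\lambda)=f_j(\lambda)$, i.e.\ $s_{t_i}+\lambda\rho_{t_i}=s_j+\lambda\rho_j$, shows that the two lines meet precisely at $\lambda=\mu_j$. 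Because $f_j$ has the strictly larger slope $\rho_j>\rho_{t_i}$, I obtain the sign rule $f_j(\lambda)-f_{t_i}(\lambda)=(\rho_j-\rho_{t_i})(\lambda-\mu_j)$, which is positive for $\lambda>\mu_j$, zero at $\lambda=\mu_j$, and negative for $\lambda<\mu_j$.

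Next I would evaluate everything at $\lambda=\lambda_i=\max\{\mu_j:j=t_i+1,\dots,L\}$, which is exactly how $\lambda_i$ and $\Lambda_i$ are defined in Definition \ref{def:lambda_i}. For $j\in\Lambda_i$ one has $\mu_j=\lambda_i$ and hence $f_j(\lambda_i)=f_{t_i}(\lambda_i)$; for $j\in\{t_i+1,\dots,L\}\setminus\Lambda_i$ one has $\mu_j<\lambda_i$, so the sign rule gives $f_j(\lambda_i)>f_{t_i}(\lambda_i)$. Thus over the index range $\{t_i+1,\dots,L\}$ the function $j\mapsto s_j+\lambda_i\rho_j=f_j(\lambda_i)$ attains the common minimal value shared by the indices in $\Lambda_i$ exactly on $\Lambda_i$, which is the claimed identity $\Lambda_i=\arg\min\{s_j+\lambda_i\rho_j:j=t_i+1,\dots,L\}$.

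The argument is short and I do not anticipate a genuine obstacle; the only points requiring care are purely bookkeeping. Since $t_i$ itself is excluded from the minimization range, I should report the minimal value as the common value $f_j(\lambda_i)$ shared by the indices $j\in\Lambda_i$ rather than as $f_{t_i}(\lambda_i)$ — these coincide, but phrasing the conclusion through $\Lambda_i$ keeps it strictly about indices in $\{t_i+1,\dots,L\}$. I would also observe that inside the while loop one has $t_i=\max\Lambda_{i-1}\le L-1$, so the index set $\{t_i+1,\dots,L\}$ is nonempty and $\Lambda_i$, being an $\arg\max$ (equivalently, by the above, an $\arg\min$) over a nonempty finite set, is itself nonempty; hence the asserted identity is an equality of two nonempty subsets of $\{t_i+1,\dots,L\}$.
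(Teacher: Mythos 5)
Your proof is correct and takes essentially the same route as the paper's: both rest on the algebraic equivalence between $\lambda_i\ge\frac{s_{t_i}-s_j}{\rho_j-\rho_{t_i}}$ and $f_{t_i}(\lambda_i)\le f_j(\lambda_i)$ (using $\rho_j>\rho_{t_i}$ from Proposition \ref{prop:rhoSOmega}). Your packaging via the intersection abscissae $\mu_j$ and the sign rule gives both inclusions in one stroke, whereas the paper proves the two inclusions separately, but the underlying computation is identical.
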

\begin{proof}
By the definition of $\lambda_i$ and $\Lambda_i$, for any $k\in\Lambda_i$,  $\lambda_i=\frac{s_{t_i}-s_k}{\rho_k-\rho_{t_i}}\ge\frac{s_{t_i}-s_j}{\rho_j-\rho_{t_i}}$ for  $j=t_{i+1}+1,\dots, L$. It follows that for any $k\in\Lambda_i$, $s_k+\lambda_i\rho_k\le  s_j+\lambda_i\rho_j $ for  $j=t_{i}+1,\dots, L$. Therefore, we have $\Lambda_i\subseteq \arg\min\{s_j+\lambda_i\rho_j: j=t_{i}+1,\dots, L\}$.

Conversely, for any $k\in \arg\min\{s_j+\lambda_i\rho_j: j=t_{i}+1,\dots, L\}$,  $s_k+\lambda_i\rho_k\le s_j+\lambda_i\rho_j$ for $j=t_{i}+1,\dots, L$. By the definition of $\lambda_i$,  $s_{t_i}+\lambda_i\rho_{t_i}\le s_j+\lambda_i\rho_j$ for $j=t_{i}+1,\dots, L$ and there exists $k^*\in\{t_{i}+1,\dots, L\}$ such that  $s_{t_i}+\lambda_i\rho_{t_i}= s_{k^*}+\lambda_i\rho_{k^*}$. Thus, $s_k+\lambda_i\rho_k=s_{t_i}+\lambda_i\rho_{t_i}$ and $\lambda_i=\frac{s_{t_i}-s_k}{\rho_k-\rho_{t_i}}\ge\frac{s_{t_i}-s_j}{\rho_j-\rho_{t_i}} $ for $j=t_{i}+1,\dots, L$. By the definition of $\Lambda_i$, one has $k\in\Lambda_i$.
\end{proof}

Now we are ready to give the proof of Proposition \ref{prop:lambda_i}.
\begin{proof}
It suffices to prove Items (ii) and (iii)  since Item (i) can be obtained by Item (ii) and Item (iv) is clear from the definition of $\Lambda_i$.

We first prove Item (ii). By the definition, for any $i \in \mathbb{N}_{K-1}^0$, it holds that $t_i<y$ for any $y\in\Lambda_i$. Thus we get $t_i<t_{i+1}$ for $i \in \mathbb{N}_{K-1}^0$ by the fact that $t_{i+1}\in\Lambda_{i}$. Then we obtain Item (ii).

We then prove Item (iii). From the definition of $t_{i+1}$, $t_{i+1}\in\Lambda_i$ for $i \in \mathbb{N}_{K-1}^0$. Then by Lemma \ref{lemma:Lam}, it follows that $s_{t_{i+1}}+\lambda_i\rho_{t_{i+1}}<s_j+\lambda_i\rho_j$ for any $j=t_{i+1}+1,\dots, L$ due to $t_{i+1}=\max\Lambda_i$. Then we have $\lambda_i>\frac{s_{t_{i+1}}-s_j}{\rho_j-\rho_{t_{i+1}}}$ for $j=t_{i+1}+1,\dots, L$. According to the definition of $\lambda_{i+1}$, $\lambda_i>\lambda_{i+1}$ for $i \in \mathbb{N}_{K-1}^0$. Item (iii) follows immediately.
\end{proof}

\subsection{Proof of Theorem \ref{thm:stability}}

We first show a lemma  which  plays an important role in the following analysis.
\begin{lemma}\label{lema:lambdai}
Given $\phi$ satisfying H\ref{hypo:1} and $p\in\{1, 2\}$, let $L$ and $s_i, \rho_i, \Omega_i$ for $i\in\mathbb{N}_{L}^0$ be defined by Definition \ref{def:sRhoOmega}. Let  $K$, $\{\Lambda_i\subseteq \mathbb{N}_{L}^0: i=-1,0,\dots, K-1\}$ and $t_i, \lambda_i$ for $i\in\mathbb{N}_{K}^0$ be  defined by  Definition \ref{def:lambda_i}.
For $i \in \mathbb{N}_{K-1}^0$,
\begin{itemize}
\item [(i)] $s_{t_{i+1}}+\lambda_i\rho_{t_{i+1}}=s_{t_{i}}+\lambda_i\rho_{t_{i}}$,
\item [(ii)] $s_{t_{i+1}}+\lambda_{i}\rho_{t_{i+1}}\le s_{j}+\lambda_{i}\rho_{j}$ for $j=t_{i},t_{i}+1\dots, L$,
\item [(iii)] for $j\in\mathbb{N}_i^0$,
        \begin{equation}\label{eq:8.1}
        s_{t_{i+1}}+\lambda_{i+1}\rho_{t_{i+1}}<s_{t_{j}}+\lambda_{i+1}\rho_{t_{j}},
        \end{equation}
\item [(iv)] $\lambda_i\le \frac{s_j-s_{t_{i+1}}}{\rho_{t_{i+1}}-\rho_j}$ for all $j\in\mathbb{N}_{t_{i+1}-1}^0$, in particular,  $\lambda_i< \frac{s_j-s_{t_{i+1}}}{\rho_{t_{i+1}}-\rho_j}$ for all $j\in\mathbb{N}_{t_{i}-1}^0$.
\end{itemize}
\end{lemma}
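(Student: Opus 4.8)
The plan is to translate every inequality into a statement about the affine functions $f_j(\lambda)=s_j+\lambda\rho_j$ from \eqref{def:fi}, whose lower envelope is $F$. By Proposition \ref{prop:rhoSOmega} we have $s_0>s_1>\cdots>s_L=0$ and $\rho_0<\rho_1<\cdots<\rho_L$, so for indices $a<b$ the difference $f_a-f_b$ has positive intercept $s_a-s_b$ and negative slope $\rho_a-\rho_b$; hence it is strictly decreasing in $\lambda$ and vanishes exactly at the crossing value $\frac{s_a-s_b}{\rho_b-\rho_a}>0$. With this dictionary, all four items become comparisons of line heights at the breakpoints $\lambda_i$, and the ratios $\frac{s_j-s_{t_{i+1}}}{\rho_{t_{i+1}}-\rho_j}$ appearing in (iv) are precisely such crossing values.

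First I would dispose of (i) and (ii). Since $\Lambda_i\neq\emptyset$ by Proposition \ref{prop:lambda_i}(iv) and $t_{i+1}=\max\Lambda_i$, we have $t_{i+1}\in\Lambda_i$, so the definition of $\lambda_i$ and $\Lambda_i$ gives $\lambda_i=\frac{s_{t_i}-s_{t_{i+1}}}{\rho_{t_{i+1}}-\rho_{t_i}}$; clearing the positive denominator rearranges this into $f_{t_{i+1}}(\lambda_i)=f_{t_i}(\lambda_i)$, which is (i). For (ii), Lemma \ref{lemma:Lam} shows that $t_{i+1}$ minimizes $s_j+\lambda_i\rho_j$ over $j=t_i+1,\dots,L$, so $f_{t_{i+1}}(\lambda_i)\le f_j(\lambda_i)$ for those $j$, and the remaining index $j=t_i$ is handled by the equality in (i).

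The substance is (iv), and I expect its strict inequality to be the main obstacle. For $t_i\le j\le t_{i+1}-1$ the bound $f_{t_{i+1}}(\lambda_i)\le f_j(\lambda_i)$ is already (ii); but for $j\le t_i-1$ (the flatter lines that were active only at larger parameters) the single-step minimality of Lemma \ref{lemma:Lam} says nothing, so my plan is to transport the estimate across consecutive breakpoints by induction on $i$. Writing the non-strict assertion of (iv) as $P(i)$ and the strict one as $Q(i)$, the base case $Q(0)$ is vacuous and $P(0)$ is just (ii), since every relevant index is $\ge t_0=0$. For $i\ge1$, fix $j\le t_i-1$ and consider $h(\lambda):=f_{t_i}(\lambda)-f_j(\lambda)$, which is strictly increasing because $\rho_{t_i}>\rho_j$; the hypothesis $P(i-1)$ gives $h(\lambda_{i-1})\le0$, and since $\lambda_i<\lambda_{i-1}$ by Proposition \ref{prop:lambda_i}(iii), monotonicity yields $h(\lambda_i)<h(\lambda_{i-1})\le0$. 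Combined with $f_{t_i}(\lambda_i)=f_{t_{i+1}}(\lambda_i)$ from (i), this is exactly $Q(i)$, and then $P(i)$ follows from $Q(i)$ together with (ii), closing the induction.

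Finally, item (iii) drops out of (i) and (iv). At $\lambda_i$ one has $f_{t_{i+1}}(\lambda_i)\le f_{t_{j'}}(\lambda_i)$ for every $j'\in\mathbb{N}_i^0$: this is the strict bound $Q(i)$ when $j'\le i-1$ (so that $t_{j'}\le t_i-1$) and the equality (i) when $j'=i$. Because $t_{j'}<t_{i+1}$, the difference $f_{t_{i+1}}-f_{t_{j'}}$ is strictly increasing, so moving from $\lambda_i$ down to the smaller value $\lambda_{i+1}$ decreases it strictly; hence $f_{t_{i+1}}(\lambda_{i+1})<f_{t_{j'}}(\lambda_{i+1})$ for all $j'\in\mathbb{N}_i^0$, which is \eqref{eq:8.1}.
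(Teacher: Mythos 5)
Your proof is correct. The underlying mechanism is the same as the paper's — both arguments live entirely in the world of the affine functions $f_j(\lambda)=s_j+\lambda\rho_j$ and repeatedly exploit that $f_a-f_b$ is strictly monotone in $\lambda$ whenever $a\neq b$, so that an inequality established at one breakpoint can be transported strictly to any smaller $\lambda$. Items (i) and (ii) are handled identically (via $t_{i+1}\in\Lambda_i$ and Lemma \ref{lemma:Lam}). Where you genuinely diverge is in the logical ordering of (iii) and (iv): the paper proves (iii) first by a downward induction on the block index $j$ (chaining $f_{t_{i+1}}(\lambda_{i+1})<f_{t_i}(\lambda_{i+1})<\cdots<f_{t_0}(\lambda_{i+1})$) and then invokes (iii) inside a somewhat intricate proof of (iv), whereas you prove (iv) first by an induction on $i$ that interleaves the non-strict claim $P(i)$ with the strict claim $Q(i)$, and then obtain (iii) as an immediate one-line corollary of (i), $Q(i)$, and monotonicity. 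The reversal is sound — your induction is self-contained, the base case $P(0)$ really is covered by (ii) since $t_0=0$, and the step $P(i-1)\Rightarrow Q(i)\Rightarrow P(i)$ uses only $\lambda_i<\lambda_{i-1}$ and $\rho_j<\rho_{t_i}$ for $j<t_i$. The payoff of your organization is a cleaner dependency structure and a shorter proof of (iii); the paper's version front-loads (iii) because that chain of strict inequalities is also what it reuses later in the proof of Theorem \ref{thm:stability}.
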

\begin{proof}
We first prove Items (i) and (ii). From the definition of $\lambda_i$ and $t_{i+1}$, it follows that $\lambda_i=\frac{s_{t_i}-s_{t_{i+1}}}{\rho_{t_{i+1}}-\rho_{t_i}}$, which implies Item (i). Item (ii) follows from Item (i), Lemma \ref{lemma:Lam} and $t_{i+1}\in\Lambda_i$.

Next we prove Item (iii).  Since $\lambda_{i+1}<\lambda_i$, $t_{i+1}>t_i$ and $\rho_{t_{i+1}}>\rho_{t_i}$, it holds that
\begin{eqnarray*}
s_{t_{i+1}}+\lambda_{i+1}\rho_{t_{i+1}}&=&s_{t_{i+1}}+\lambda_{i}\rho_{t_{i+1}}+(\lambda_{i+1}-\lambda_i)\rho_{t_{i+1}}\\
&<&s_{t_{i}}+\lambda_{i}\rho_{t_{i}}+(\lambda_{i+1}-\lambda_i)\rho_{t_{i}}\\
&=&s_{t_i}+\lambda_{i+1}\rho_{t_{i}}.\end{eqnarray*}
This means that \eqref{eq:8.1} holds for $j=i$. We assume for  $k\in\mathbb{N}_{i-1}^0$, \eqref{eq:8.1} holds when $j=k+1$. Then we try to show \eqref{eq:8.1} holds as $j=k$, that is \begin{equation}\label{eq:6}
s_{t_{i+1}}+\lambda_{i+1}\rho_{t_{i+1}}<s_{t_{k}}+\lambda_{i+1}\rho_{t_k}.
\end{equation}
One finds that
\begin{eqnarray*}
s_{t_{k+1}}+\lambda_{i+1}\rho_{t_{k+1}}&=&s_{t_{k+1}}+\lambda_{k}\rho_{t_{k+1}}+(\lambda_{i+1}-\lambda_k)\rho_{t_{k+1}}\\
&<&s_{t_{k}}+\lambda_{k}\rho_{t_{k}}+(\lambda_{i+1}-\lambda_k)\rho_{t_{k}}\\
&=&s_{t_k}+\lambda_{i+1}\rho_{t_{k}}.
\end{eqnarray*}
Then  \eqref{eq:6} follows. Therefore, we obtain Item (iii) immediately.

{{
Next, we prove Item (iv). Item (ii) implies that $\lambda_i\le \frac{s_j-s_{t_{i+1}}}{\rho_{t_{i+1}}-\rho_j}$ for all $j={t_i}, \dots, t_{i+1}-1$}}. Thus, we only need to prove $\lambda_i< \frac{s_j-s_{t_{i+1}}}{\rho_{t_{i+1}}-\rho_j}$ for  $j\in\mathbb{N}_{{t_i}-1}^0$, which is equivalent to
\begin{equation}\label{eq:3}
s_{t_{i+1}}+\lambda_i\rho_{{t_{i+1}}}< s_j+\lambda_i\rho_j
\end{equation}
for $j\in\mathbb{N}_{{t_i}-1}^0$.
By Item (ii), for $j\in\mathbb{N}_i$, it holds that
\begin{equation}
s_{t_j}+\lambda_{j-1}\rho_{t_j}\le s_k+\lambda_{j-1}\rho_k
 \end{equation}
for $k=t_{j-1}, \dots, t_{j}-1$.
From the fact that $0<\lambda_i< \lambda_{j-1}$ and $\rho_{t_j}>\rho_{k}>0$, we obtain
 \begin{equation}\label{eq:4}
s_{t_j}+\lambda_i\rho_{t_j}< s_k+\lambda_i\rho_k
 \end{equation}
 holds for $k=t_{j-1}, \dots, t_{j}-1$.
By Items (i) and (iii),  $s_{t_{i+1}}+\lambda_i\rho_{{t_{i+1}}}\le  s_{t_j}+\lambda_{i}\rho_{t_j}$ for $j\in\mathbb{N}_i$. This together with \eqref{eq:4} and Item (i) implies that
\eqref{eq:3} holds for $j\in\mathbb{N}_{t_{i}-1}^0$.
\end{proof}

The properties of $f_i$ defined in \eqref{def:fi} are described in the next proposition.

\begin{proposition}\label{prop:fi}
Given $\phi$ satisfying H\ref{hypo:1} and $p\in\{1, 2\}$, let $L$ and $s_i, \rho_i, \Omega_i$ for $i\in\mathbb{N}_{L}^0$ be defined by Definition \ref{def:sRhoOmega}. Let  $K$, $\{\Lambda_i\subseteq \mathbb{N}_{L}^0: i=-1,0,\dots, K-1\}$ and $t_i, \lambda_i$ for $i\in\mathbb{N}_{K}^0$ be  defined by  Definition \ref{def:lambda_i}. Let $f_i$ for $i \in\mathbb{N}_{L}^0$ \eqref{def:fi}.  Then the following statements hold:
\begin{itemize}
\item [(i)] For $k\in\mathbb{N}_{K-1}^0$, $f_i(\lambda_k)=f_{t_k}(\lambda_k)$ for all $i\in\Lambda_k$.
\item [(ii)] $f_{t_k}(\lambda_k)=f_{t_{k+1}}(\lambda_k)$ for $k\in\mathbb{N}_{K-1}^0$.
\item [(iii)]$f_0(\lambda)<f_j(\lambda)$ for any $\lambda\in(\lambda_0, +\infty)$ and any $j\in\mathbb{N}_L$.
\item [(iv)] $f_L(\lambda)<f_j(\lambda)$ for any $\lambda\in(0, \lambda_{K-1})$ and any $j\in\mathbb{N}_{L-1}^0$.
\item [(v)] For $k\in\mathbb{N}_{K-1}$, there holds $f_{t_k}(\lambda)<f_j(\lambda)$ for any $\lambda\in(\lambda_{k}, \lambda_{k-1})$ and any $j=1,2,\dots,t_k-1, t_{k}+1,\dots, L$.
\end{itemize}
\end{proposition}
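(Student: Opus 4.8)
The plan is to exploit that every $f_i$ is an affine function of $\lambda$ with slope $\rho_i$ and intercept $s_i$, so that any two of them can be compared through a single elementary principle: if $f_j(\lambda^*)\ge f_i(\lambda^*)$ at some $\lambda^*$ and $\rho_j>\rho_i$, then $f_j(\lambda)>f_i(\lambda)$ for all $\lambda>\lambda^*$, whereas if $\rho_j<\rho_i$ the strict inequality $f_j(\lambda)>f_i(\lambda)$ holds for all $\lambda<\lambda^*$. This follows at once by writing $f_j(\lambda)-f_i(\lambda)=(s_j-s_i)+\lambda(\rho_j-\rho_i)$ and noting that $\lambda(\rho_j-\rho_i)$ strictly dominates $\lambda^*(\rho_j-\rho_i)$ on the appropriate side of $\lambda^*$, together with the strict orderings $s_0>\cdots>s_L$ and $\rho_0<\cdots<\rho_L$ from Proposition~\ref{prop:rhoSOmega}. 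Items (i) and (ii) are then immediate from the defining equations, and items (iii)--(v) are obtained by applying this principle at a correctly chosen anchor point $\lambda^*$.

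For item (i), I would note that for $i\in\Lambda_k$ the defining maximum in Definition~\ref{def:lambda_i} gives $\lambda_k=(s_{t_k}-s_i)/(\rho_i-\rho_{t_k})$; clearing the positive denominator rearranges exactly to $s_i+\lambda_k\rho_i=s_{t_k}+\lambda_k\rho_{t_k}$, that is, $f_i(\lambda_k)=f_{t_k}(\lambda_k)$. Item (ii) is then the special case $i=t_{k+1}$, since $t_{k+1}=\max\Lambda_k\in\Lambda_k$; alternatively it is precisely Lemma~\ref{lema:lambdai}(i).

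For items (iii)--(v) I would fix the anchor and the sign of the slope difference in each case. In item (iii), since $t_0=0$, the definition of $\lambda_0$ yields $\lambda_0\ge(s_0-s_j)/(\rho_j-\rho_0)$, hence $f_0(\lambda_0)\le f_j(\lambda_0)$, for every $j\in\mathbb{N}_L$; as $\rho_j>\rho_0$, the principle forces $f_0(\lambda)<f_j(\lambda)$ for $\lambda>\lambda_0$. In item (iv), I would apply Lemma~\ref{lema:lambdai}(iv) with $i=K-1$, so that $t_{i+1}=t_K=L$ and $s_L=0$: it gives $\lambda_{K-1}\le s_j/(\rho_L-\rho_j)$, hence $f_L(\lambda_{K-1})\le f_j(\lambda_{K-1})$, for every $j\in\mathbb{N}_{L-1}^0$; since now $\rho_j<\rho_L$, the principle gives $f_L(\lambda)<f_j(\lambda)$ for $\lambda<\lambda_{K-1}$. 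Item (v) requires a two-sided argument: for $j$ with $t_k<j\le L$ the definition of $\lambda_k$ gives $f_{t_k}(\lambda_k)\le f_j(\lambda_k)$ together with $\rho_j>\rho_{t_k}$, so $f_{t_k}(\lambda)<f_j(\lambda)$ for $\lambda>\lambda_k$; for $j$ with $1\le j<t_k$ Lemma~\ref{lema:lambdai}(iv) with $i=k-1$ gives $f_{t_k}(\lambda_{k-1})\le f_j(\lambda_{k-1})$ together with $\rho_j<\rho_{t_k}$, so $f_{t_k}(\lambda)<f_j(\lambda)$ for $\lambda<\lambda_{k-1}$. Intersecting the two half-lines yields $f_{t_k}(\lambda)<f_j(\lambda)$ for all such $j$ on the whole interval $(\lambda_k,\lambda_{k-1})$.

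The computations are entirely routine; the only genuine content is selecting the right anchor value and the right supporting inequality there. The comparisons toward larger slopes (item (iii) and the $j>t_k$ half of (v)) are read off directly from the maximum defining $\lambda_k$, whereas the comparisons toward smaller slopes (item (iv) and the $j<t_k$ half of (v)) need the sharper estimate in Lemma~\ref{lema:lambdai}(iv), and in particular its strict ``in particular'' clause to secure the strict inequalities claimed. Thus the main obstacle is already absorbed into Lemma~\ref{lema:lambdai}, and the proposition is essentially a repackaging of those estimates via the one-line affine-comparison principle.
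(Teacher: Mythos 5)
Your proposal is correct and follows essentially the same route as the paper: items (i)--(ii) come directly from the defining equation $\lambda_k=(s_{t_k}-s_i)/(\rho_i-\rho_{t_k})$ for $i\in\Lambda_k$, item (iii) from the maximum defining $\lambda_0$ with $t_0=0$, and items (iv)--(v) from Item (iv) of Lemma \ref{lema:lambdai} combined with the affine-comparison principle (which the paper leaves implicit). The only quibble is your closing remark that the strict ``in particular'' clause of Lemma \ref{lema:lambdai}(iv) is needed: the non-strict bound already suffices, since the anchor inequality is only needed non-strictly and the strict slope ordering supplies strictness away from the anchor, exactly as your own detailed steps show.
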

\begin{proof}
We first prove Items (i) and (ii). From the definition of $\lambda_k$ and  $\Lambda_k$, for any $i\in\Lambda_k$ $\lambda_k=\frac{s_{{t_k}}-s_i}{\rho_i-\rho_{t_k}}$.  Item (i) is obtained immediately by the definition of $f_j$ for $j \in\mathbb{N}_{L}^0$ in \eqref{def:fi}. Item (ii)  follows from $t_{k+1}\in\Lambda_{k}$.

We next prove Item (iii). Thanks to Items (ii) and (iii) of Proposition \ref{prop:rhoSOmega}, Item (iii) amounts to
\begin{equation}\label{eq:7}
\lambda_0\ge \frac{s_0-s_j}{\rho_j-\rho_0}
 \end{equation}
 for all $j=1,2,\dots, L$. It is clear that \eqref{eq:7} holds by the definition of $\lambda_{0}$ in Definition \ref{def:lambda_i} and $t_0=0$.

 We then prove Item (iv). From Item (iv) of Lemma \ref{lema:lambdai} and $t_{K}=L$, $\lambda_{K-1}\le \frac{s_j-s_{L}}{\rho_L-\rho_j}$ for $j  \in\mathbb{N}_{L-1}^0$ and Item (iv) follows.

 Finally, we prove Item (v). By Item (iv) of Lemma \ref{lema:lambdai},  $\lambda <\frac{s_j-s_{t_k}}{\rho_{t_k}-\rho_j}$ for $\lambda<\lambda_{k-1}$ and $j\in\mathbb{N}_{t_k-1}^0$. Thus,  $f_{t_k}(\lambda)<f_j(\lambda)$ for any $\lambda< \lambda_{k-1}$ and any $j\in\mathbb{N}_{t_k-1}^0$. By the definition of $\lambda_k$, we have $\lambda>\frac{s_{t_k}-s_j}{\rho_j-\rho_{t_k}}$ for $\lambda>\lambda_k$ and $j=t_{k}+1,\dots,L$. Then, it follows that $f_{t_k}(\lambda)<f_j(\lambda)$ for any $\lambda> \lambda_{k}$ and any $j=t_{k}-1,\dots, L$.
\end{proof}

Now, we are ready to show Theorem \ref{thm:stability}.

\begin{proof}
We only need to prove $\Omega(\lambda_i)=\bigcup_{k\in\Lambda_i}\Omega_k\cup \Omega_{t_i}$ for $i\in\mathbb{N}_{K-1}^0$ in Item (ii), since  Item (i) and the rest of Item (ii) are direct consequences of  Theorem \ref{thm:exist} and Proposition \ref{prop:fi}.

By  Item (ii) of Theorem \ref{thm:exist} and Item (i) of Proposition \ref{prop:fi},  $\bigcup_{k\in\Lambda_{i}}\Omega_k\bigcup \Omega_{t_i}\subseteq\Omega(\lambda_i)$. Using this fact, it suffices to show $f_{t_i}(\lambda_i)<f_j(\lambda_i)$ for any $j\not\in\Lambda_{i}\cup\{t_i\}$. From Items (i) and (iv) of Lemma \ref{lema:lambdai}, $f_{t_i}(\lambda_i)=f_{t_i+1}(\lambda_i)<f_j(\lambda_i)$ for $j=0,1,\dots, t_i-1$. By the definition of $\Lambda_i$ and $\lambda_i$, we have $\lambda_i>\frac{s_{t_i}-s_j}{\rho_j-\rho_{t_i}}$ for $j\in\{t_i+1,\dots,L\}\backslash\Lambda_i$, which implies $f_{t_i}(\lambda_i)<f_j(\lambda_i)$ for $j\in\{t_i+1,\dots,L\}\backslash\Lambda_i$.
Hence $f_{t_i}(\lambda_i)<f_j(\lambda_i)$ for any $j\not\in\Lambda_{i}\cup\{t_i\}$.
\end{proof}
}

\vspace{10mm}

\bibliographystyle{siam}


\end{document}